\newcommand{\ol}{\overline}
\newcommand{\D}{\Delta}
\newcommand{\cirrad}{0.07}
\title[The special fiber of the motivic deformation is algebraic]{The special fiber of the motivic deformation of the stable homotopy category is algebraic}
\author{Bogdan Gheorghe}
\address{Max-Planck-Institut f\"ur Mathematik\\
53111 Bonn, Germany}
\email{gheorghebg@mpim-bonn.mpg.de}
\author{Guozhen Wang}
\address{Shanghai Center for Mathematical Sciences, Fudan University, Shanghai, China, 200433}
\email{wangguozhen@fudan.edu.cn}
\author{Zhouli Xu}
\address{Department of Mathematics, Massachusetts Institute of Technology, Cambridge, MA 02139}
\email{xuzhouli@mit.edu}
\begin{document}

\begin{abstract}
For each prime $p$, we define a $t$-structure on the category $\widehat{S^{0,0}}/\tau\text{-}\mathbf{Mod}_{\textup{harm}}^b$ of harmonic $\mathbb{C}$-motivic left module spectra over $\widehat{S^{0,0}}/\tau$, whose MGL-homology has bounded Chow-Novikov degree, such that its heart is equivalent to the abelian category of $p$-completed $\textup{BP}_*\textup{BP}$-comodules that are concentrated in even degrees. We prove that $\widehat{S^{0,0}}/\tau\text{-}\mathbf{Mod}_{\textup{harm}}^b$ is equivalent to $\mathcal{D}^b({\textup{BP}_*\textup{BP}\text{-}\mathbf{Comod}}^{\textup{ev}})$ as stable $\infty$-categories equipped with $t$-structures.

As an application, for each prime $p$, we prove that the motivic Adams spectral sequence for $\widehat{S^{0,0}}/\tau$, which converges to the motivic homotopy groups of $\widehat{S^{0,0}}/\tau$, is isomorphic to the algebraic Novikov spectral sequence, which converges to the classical Adams-Novikov $E_2$-page for the sphere spectrum $\widehat{S^0}$. This isomorphism of spectral sequences allows Isaksen and the second and third authors to compute the stable homotopy groups of spheres at least to the 90-stem, with ongoing computations into even higher dimensions.
\end{abstract}

\maketitle

\setcounter{tocdepth}{2}
\tableofcontents


\section{Introduction}
Motivic homotopy theory, introduced by Voevodsky and Morel \cite{MorelVoevodsky, Voe, Voered, VoeEM, VoeZ2, MorelA1, MorelAdams}, is a successful application of abstract homotopy theory to solve problems in number theory and algebraic geometry (see \cite{VoeMC1, VoeMC2, VoeBK} for example).

Over $\Spec \C$, one may view the $p$-completed stable motivic homotopy category as a deformation of the $p$-completed classical stable homotopy category. The parameter of the deformation is given by an element $\tau$ in $\pi_{0, -1}$ of the $p$-completed motivic sphere spectrum, which can be intuitively viewed as the standard coordinate $t\mapsto e^{2\pi it}$ on $\mathbb{G}_m$. Formally speaking, following Hu-Kriz-Ormsby \cite{HKOrem}, the element $\tau$ is the inverse limit of the Bockstein pre-images of the Morel classes \cite{Morelpi0} of roots of unity. Dugger-Isaksen \cite{DuggerIsaksenMASS} have identified the generic fiber ``$\tau = 1$" with the classical stable homotopy category, and the first main result of this paper identifies the special fiber ``$\tau = 0$" with the derived category of $\textup{BP}_*\textup{BP}$-comodules that are concentrated in even degrees, which is entirely algebraic in nature. Moreover, under this identification, the motivic Adams-Novikov spectral sequence for the motivic sphere spectrum corresponds to the $\tau$-Bockstein spectral sequence. This deformation induces a deformation of motivic Adams spectral sequences. The second main result of this paper identifies the motivic Adams spectral sequence for the motivic sphere spectrum at the special fiber ``$\tau = 0$" with the algebraic Novikov spectral sequence for the classical sphere spectrum, which is again entirely algebraic. This deformation makes it possible for Isaksen, the second and third authors \cite{IWX} to compute classical stable homotopy groups of spheres at least to the 90-stem, with ongoing computations into even higher dimensions.

\subsection{Main results}

In this paper, we prove two results in the stable motivic homotopy theory over $\Spec \C$, with connections to chromatic homotopy theory and applications to classical homotopy theory. 

The first result identifies the special fiber ``$\tau = 0$" of the motivic deformation with the derived category of $\textup{BP}_*\textup{BP}$-comodules. We prove an $\infty$-category version of a conjecture due to the first author and Isaksen in 2016 \cite{IsaP}. The derived category in the following Theorem 1.1 is understood as a stable $\infty$-category in the sense of Lurie in Higher Algebra \cite[Section 1.3.2]{HA}. 

\begin{thm}[Theorem \ref{thm:ctaumod}]\label{intro:mainthmpart1}
There is an equivalence of stable $\infty$-categories equipped with $t$-structures at each prime $p$
$$\mathcal{D}^b({\textup{BP}_*\textup{BP}\text{-}\mathbf{Comod}}^{\textup{ev}}) \simeq \widehat{S^{0,0}}/\tau\text{-}\mathbf{Mod}_{\textup{harm}}^b,$$
between the bounded derived category of $p$-completed $\textup{BP}_*\textup{BP}$-comodules that are concentrated in even degrees,
and the category of harmonic motivic left module spectra over $\widehat{S^{0,0}}/\tau$, whose MGL-homology has bounded Chow-Novikov degree, with morphisms the $\widehat{S^{0,0}}/\tau$-linear maps.
\end{thm}

Here $\widehat{S^{0,0}}/\tau$ is a motivic $E_\infty$-ring spectrum, which is also known as the cofiber of $\tau$. The motivic spectrum MGL is the algebraic cobordism spectrum introduced by Voevodsky \cite{Voe} and studied by Levine-Morel \cite{ML}, Panin-Pimenov-R\"ondigs \cite{PPR2} and many others. A motivic left module spectrum over $\widehat{S^{0,0}}/\tau$ is \emph{harmonic}, if it is $\widehat{S^{0,0}}/\tau$-cellular and the map to its MGL-completion induces an isomorphism on $\pi_{*,*}$. See Definition \ref{def:pisscomplete} for a precise definition. The Chow-Novikov degree is the topological degree minus twice the motivic weight. 

The derived category of $p$-completed $\textup{BP}_*\textup{BP}$-comodules that are concentrated in even degrees is also known as the derived category of quasi-coherent sheaves on the moduli stack of formal groups over $\mathbb{Z}_p$-algebras. This connection is foundational to chromatic homotopy theory, and is due to Quillen \cite{Qui} and Morava \cite{Mor} (see also Goerss-Hopkins \cite{Go, Hop}). Our theorem further connects these categories to motivic homotopy theory.

The equivalence of stable $\infty$-categories in Theorem~\ref{intro:mainthmpart1} is actually symmetric monoidal. See Remark~\ref{symmo} for more details.

By an $\textup{Ind}$-object argument, we have an unbounded version of Theorem \ref{intro:mainthmpart1} that connects to Hovey's \cite{Hovey} derived category $\mathbf{Stable}(\textup{BP}_*\textup{BP})$. Since every $\textup{BP}_*\textup{BP}$-comodule splits as its even graded part and its odd graded part, the underlying stable $\infty$-category of Hovey's unbounded derived category $\mathbf{Stable}(\textup{BP}_*\textup{BP})$ splits accordingly.
$$\mathbf{Stable}(\textup{BP}_*\textup{BP}) \simeq \mathbf{Stable}(\textup{BP}_*\textup{BP}\text{-}\mathbf{Comod}^\textup{ev}) \times \mathbf{Stable}(\textup{BP}_*\textup{BP}\text{-}\mathbf{Comod}^\textup{odd}).$$
 
 
 
\begin{cor}
There is an equivalence of stable $\infty$-categories at each prime $p$
$$\mathbf{Stable}(\textup{BP}_*\textup{BP}\text{-}\mathbf{Comod}^\textup{ev})\simeq \widehat{S^{0,0}}/\tau\text{-}\mathbf{Mod}_\textup{cell}$$
between Hovey's unbounded derived category of $\textup{BP}_*\textup{BP}$-comodules that are concentrated in even degrees and the category of cellular motivic left module spectra over $\widehat{S^{0,0}}/\tau$.
\end{cor}

After the announcement of Theorem 1.1, alternative proofs of certain versions of Corollary 1.2 have appeared in work of Krause \cite{Kra} and Pstr$\text{\c a}$gowski \cite{Pst}.
~\\

The second result identifies the motivic Adams spectral sequence at the special fiber ``$\tau = 0$" with the algebraic Novikov spectral sequence. It can be used to systematically compute a huge number of classical Adams differentials that are hard to obtain by other methods.

It is known to Isaksen \cite[Proposition 6.2.5]{StableStems} and the first author \cite[Corollary 3.14]{GheCt} that there is an isomorphism between the motivic homotopy groups of $\widehat{S^{0,0}}/\tau$ and the classical Adams-Novikov $E_2$-page. Our second result shows that there is an isomorphism of spectral sequences that converge to them.

\begin{thm}[Theorem \ref{thm:iso13}] \label{intro:thmequivss}
For each prime $p$, there is an isomorphism of spectral sequences between the motivic Adams spectral sequence for $\widehat{S^{0,0}}/\tau$ and the algebraic Novikov spectral sequence for the classical sphere spectrum $\widehat{S^0}$. 
\end{thm}

Based on Theorem \ref{intro:thmequivss}, Isaksen, the second and third authors \cite{IWX} have computed classical stable stems at least to the 90-stem, with ongoing computations into even higher dimensions. Computations of many historically difficult differentials in the range up to the 45-stem are included in the appendix.

In contrast to the original motivations of motivic homotopy theory, Isaksen and his collaborators \cite{StableStems, IsaksenCharts, IsaksenCharts2, IX} have recently begun to reverse the information flow and applied stable motivic homotopy theory to obtain computational results in the classical stable homotopy theory. Our Theorems \ref{intro:mainthmpart1} and \ref{intro:thmequivss} have the same spirit and further deepen the connections to chromatic homotopy theory. Using motivic homotopy theory, we build up a new connection between the classical Adams spectral sequence and the Adams-Novikov spectral sequence, that allows us to compute stable stems in a much larger range than was previously possible.

\begin{remark}
	Our Theorems \ref{intro:mainthmpart1} and \ref{intro:thmequivss} actually hold for any algebraically closed field of characteristic 0. In fact, it was clear in Dugger-Isaksen \cite{DuggerIsaksenMASS} that all related computations in algebraic closed field of characteristic 0 work in the same way as over $\mathbb{C}$, which are based on Voevodsky's computation of the motivic Steenrod algebra (see \cite[Theorem~4.47]{VoeEM} and \cite[Section 11]{Voered}).
\end{remark}

\subsection{The stable $\infty$-category of motivic spectra over $\widehat{S^{0,0}}/\tau$}

We work with the stable $\infty$-category of motivic spectra over $\Spec \C$, denoted by
$$\C\text{-}\mathbf{mot}\text{-}\mathbf{Spectra}.$$
This is a symmetric monoidal $\infty$-category in the sense of Lurie \cite[Section 2.1.2]{HA}.
 
There are several approaches for the construction of this category $\C\text{-}\mathbf{mot}\text{-}\mathbf{Spectra}$. For example, we can take the category of $\mathcal{S}$-modules constructed by Hu \cite{Hu03}, which is a symmetric monoidal model category, and we take $\C\text{-}\mathbf{mot}\text{-}\mathbf{Spectra}$ as the underlying $\infty$-category of Hu's model category. There is another construction entirely in $\infty$-categorical terms by Robalo \cite{Robalo}. In fact, any symmetric monoidal stable $\infty$-category satisfying the universal properties of Corollary 2.39 in \cite{Robalo} would serve our purposes.
 
For a fixed prime $p$, Voevodsky (see Section 3 of \cite{VoeEM}) 
constructed the mod $p$ motivic Eilenberg-Mac Lane spectrum that represents the mod $p$ motivic cohomology. We denote it by H$\mathbb{F}_p^{\textup{mot}}$. By arguments in Dundas-R\"ondigs-\O stv\ae r \cite[Example 3.4]{DRO}, H$\mathbb{F}_p^{\textup{mot}}$ is an $E_\infty$-algebra in $\C\text{-}\mathbf{mot}\text{-}\mathbf{Spectra}$. 

Its value at a point is
$${\text{H}\mathbb{F}_p^{\textup{mot}}}_{*,*} = \mathbb{F}_p[\tau],$$
where $\tau$ is in bidegree $(0,-1)$.

We denote by $S^{0,0}$ the motivic sphere spectrum. For the grading, we denote by $S^{1,0}$ the suspension spectrum of the simplicial sphere $S^1$, and by $S^{1,1}$ the suspension spectrum of the multiplicative group $\mathbb{G}_m = \mathbb{A}^1 - 0$. 

Let $\widehat{S^{s,w}}$ be the H$\mathbb{F}_p^{\textup{mot}}$-completed motivic sphere spectrum in bidegree $(s,w)$. It is a theorem of Hu-Kriz-Ormsby \cite{HKOrem, HKOcon} that $\widehat{S^{0,0}}$ and the usual $p$-completion of the motivic sphere spectrum have isomorphic motivic homotopy groups. 
Moreover, $\widehat{S^{0,0}}$ is an $E_\infty$-algebra in the symmetric monoidal $\infty$-category $\C\text{-}\mathbf{mot}\text{-}\mathbf{Spectra}$. See Section~\ref{Hcompletion} for more details regarding this fact and discussion on the H$\mathbb{F}_p^{\textup{mot}}$-completion. 

We denote by 
$$\widehat{S^{0,0}}\text{-}\mathbf{Mod}$$ 
the stable $\infty$-category of motivic module spectra over $\widehat{S^{0,0}}$.

The class $\tau$ can be lifted to a map between H$\mathbb{F}_p^{\textup{mot}}$-completed motivic sphere spectra
$$\tau: \widehat{S^{0,-1}} \longrightarrow \widehat{S^{0,0}}$$
that induces a nonzero map on mod $p$ motivic homology. The reader should be warned that $\tau$ does not further lift to a map between uncompleted motivic sphere spectra. See Dugger-Isaksen \cite{DuggerIsaksenMASS} and Hu-Kriz-Ormsby \cite{HKOrem} for more details. We denote by $\widehat{S^{0,0}}/\tau$ the cofiber of $\tau$.
\begin{equation*}    \label{eq:introdefCt}
\widehat{S^{0,-1}} \stackrel{\tau}{\lto} \widehat{S^{0,0}} \lto \widehat{S^{0,0}}/\tau \lto \widehat{S^{1,-1}}  
\end{equation*}

\begin{conv}
All smash products without subscript $\wedge$ in this paper 
are understood taken over the H$\mathbb{F}_p^{\textup{mot}}$-completed sphere spectrum $\widehat{S^{0,0}}$. We may still write $\wedge_{\widehat{S^{0,0}}}$ in a few places to emphasis that the smash product is taking over $\widehat{S^{0,0}}$.
\end{conv}

We have suspension functors 
$$\Sigma^{s,w}( - ) = \widehat{S^{s,w}} \wedge_{\widehat{S^{0,0}}} - $$ 
in the category $\widehat{S^{0,0}}\text{-}\mathbf{Mod}$ for any $s, w \in \mathbb{Z}$.  In particular, the suspension functor $\Sigma^{1,0}$ gives the translation automorphism (in the sense of Lurie \cite[Section 1.3.2]{HA}) of the stable $\infty$-category $\widehat{S^{0,0}}\text{-}\mathbf{Mod}$.  

Given an $E_\infty$-algebra $R \in \widehat{S^{0,0}}\text{-}\mathbf{Mod}$, denote by 
$$R\text{-}\mathbf{Mod}$$ 
the stable $\infty$-category of left modules over $R$ in $\widehat{S^{0,0}}\text{-}\mathbf{Mod}$.

Following Dugger-Isaksen \cite[Definition 2.10]{DuggerIsaksen}, denote by 
$$R\text{-}\mathbf{Mod}_{\textup{cell}}$$ 
the smallest stable subcategory containing $R$ that is closed under arbitrary small colimits and suspension by $\widehat{S^{s,w}}$ for all $p, q \in \Z$. We say an object in $R\text{-}\mathbf{Mod}$ is $R$-cellular if it is contained in $R\text{-}\mathbf{Mod}_{\textup{cell}}$.

Recall from Lurie \cite[Definition 1.1.3.2]{HA} that a stable subcategory of a stable $\infty$-category is a full subcategory containing a zero object and stable under the formation of fibers and cofibers. The reader should be warned that not all motivic spectra in $\widehat{S^{0,0}}\text{-}\mathbf{Mod}$ are weakly equivalent to a cellular object.


It is a theorem of the first author \cite{GheCt} that $\widehat{S^{0,0}}/\tau$ is an $E_\infty$-algebra in $\widehat{S^{0,0}}$-$\mathbf{Mod}$ at all primes $p$. In fact, the first author carried out all details in \cite{GheCt} for the case for $p=2$, using the vanishing regions of $\pi_{*,*}\widehat{S^{0,0}}/\tau$. It is straightforward to use the same arguments for all primes $p$. See Theorem~1.1 and explanations in Section~1.2 of \cite{GheCt} for more details. 

We therefore have defined stable $\infty$-categories 
$$\widehat{S^{0,0}}/\tau\text{-}\mathbf{Mod}\text{~ and~} \widehat{S^{0,0}}/\tau\text{-}\mathbf{Mod}_{\textup{cell}}.$$
We can view the ring map
$$\widehat{S^{0,0}} \longrightarrow \widehat{S^{0,0}}/\tau$$ 
to exhibit $\widehat{S^{0,0}}/\tau$ as the special fiber of the deformation parametrized by $\tau$. The generic fiber of this deformation is $\tau^{-1}\widehat{S^{0,0}}$. 

Let MGL be the cellular motivic algebraic cobordism spectrum introduced by Voevodsky \cite{Voe} and studied by Levine-Morel \cite{ML}, Panin-Pimenov-R\"ondigs \cite{PPR2} and many others. It is an $E_\infty$-algebra in $\C\text{-}\mathbf{mot}\text{-}\mathbf{Spectra}$ (See \cite[Theorem 14.2]{Hu03} for example). We define 
$$\textup{MU}^\textup{mot} := \textup{MGL} \wedge_{S^{0,0}} \widehat{S^{0,0}}.$$
It is therefore an $E_\infty$-algebra in $\widehat{S^{0,0}}$-$\mathbf{Mod}_{\textup{cell}}$. There is a natural map
$$\textup{MU}^\textup{mot} = \textup{MGL} \wedge_{S^{0,0}} \widehat{S^{0,0}} \rightarrow \textup{MGL}^\wedge_{\textup{H}\mathbb{F}_p^\textup{mot}}$$
to the H$\mathbb{F}_p^\textup{mot}$-completion of MGL. As we will explain in Proposition~\ref{MUmot}, this map induces an isomorphism on $\pi_{*,*}$. Their motivic homotopy groups are computed by Hu-Kriz-Ormsby \cite{HKOrem} and Dugger-Isaksen \cite[Section~8.3]{DuggerIsaksenMASS}.
$$\pi_{*,*}\textup{MU}^{\textup{mot}} = \pi_{*,*}\textup{MGL}^\wedge_{\textup{H}\mathbb{F}_p^\textup{mot}} = \mathbb{Z}_p[\tau][x_1, x_2, \cdots].$$
Here $\mathbb{Z}_p$ is the $p$-adic integers and $x_i$ is in bidegree $(2i, i)$. Since $\pi_{*,*}\text{MGL}$ is much more complicated, we will mostly work with $\textup{MU}^{\textup{mot}}$ instead of MGL in our paper.

For any $X \in \widehat{S^{0,0}}$-$\mathbf{Mod}_{\textup{cell}}$, we define the $\textup{MU}^{\textup{mot}}$-homology of $X$ as
$$\textup{MU}^{\textup{mot}}_{*,*}X = \pi_{*,*} (\textup{MU}^{\textup{mot}} \wedge_{\widehat{S^{0,0}}} X).$$
By adjunction, it is clear that the $\textup{MU}^{\textup{mot}}$-homology of $X$ equals to $\text{MGL}_{*,*}X$ when taking $X$ as its underlying motivic spectrum in $\C$-$\mathbf{mot}$-$\mathbf{Spectra}$.

The spectrum $\textup{MU}^{\textup{mot}}/\tau := \widehat{S^{0,0}}/\tau \wedge_{\widehat{S^{0,0}}} \textup{MU}^{\textup{mot}}$ is an $E_\infty$-algebra in $\widehat{S^{0,0}}/\tau$-$\mathbf{Mod}_{\textup{cell}}$. Its motivic homotopy groups are
$$\pi_{*,*}(\textup{MU}^{\textup{mot}}/\tau) = \mathbb{Z}_p[x_1, x_2, \cdots] = \textup{MU}^{\textup{mot}}_{*,*}/\tau.$$

Forgetting the motivic weight, the bigraded ring $\textup{MU}^{\textup{mot}}_{*,*}/\tau$ can be identified as the singly graded ring $\textup{MU}_*$ completed at the prime $p$.

\begin{defn} \label{def:pisscomplete}
	Let $X$ be a motivic spectrum in $\widehat{S^{0,0}}/\tau$-$\mathbf{Mod}$. We say that $X$ is \emph{harmonic},
	if $X$ is $\widehat{S^{0,0}}/\tau$-cellular and the map
	$$X \longrightarrow X^{\wedge}_{\text{MGL}}$$
	induces an isomorphism on $\pi_{*,*}$. We denote by 
	$$\widehat{S^{0,0}}/\tau\text{-}\mathbf{Mod}_{\textup{harm}}$$ 
	the full stable $\infty$-subcategory of harmonic $\widehat{S^{0,0}}/\tau$-module spectra.
\end{defn}

Here the MGL-nilpotent completion $X^{\wedge}_{\text{MGL}}$ is understood taken in $\C\text{-}\mathbf{mot}\text{-}\mathbf{Spectra}$. For a precise definition, see Section~\ref{Hcompletion} and \cite{DuggerIsaksenMASS, HKOrem}. One could also define the $\textup{MU}^{\textup{mot}}$-completion $X^{\wedge}_{\textup{MU}^{\textup{mot}}}$ in $\widehat{S^{0,0}}\text{-}\mathbf{Mod}$ for any $X$ in $\widehat{S^{0,0}}$-$\mathbf{Mod}_{\textup{cell}}$. By adjunction, it is clear that the two completions $X^{\wedge}_{\text{MGL}}$ and $X^{\wedge}_{\textup{MU}^{\textup{mot}}}$ are equivalent.
\begin{displaymath}
\xymatrix{
X^{\wedge}_{\text{MGL}} \ar[r]^-{\simeq} & X^{\wedge}_{\textup{MU}^{\textup{mot}}}
}.	
\end{displaymath}
So we may equivalently define a cellular $\widehat{S^{0,0}}/\tau$-module to be harmonic, if the map to its $\textup{MU}^{\textup{mot}}$-completion induces an isomorphism on $\pi_{*,*}$.

It is clear that the spectrum $\textup{MU}^{\textup{mot}}/\tau$ is harmonic. See Section 4.1 for more examples and non-examples.

We will define $t$-structures on certain stable $\infty$-categories of motivic spectra, such as $\textup{MU}^{\textup{mot}}/\tau$-$\mathbf{Mod}_{\textup{cell}}$ and $\widehat{S^{0,0}}/\tau$-$\mathbf{Mod}_{\textup{harm}}$. Recall that by Lurie's definition \cite[Definition 1.2.1.4]{HA}, a $t$-structure on a stable $\infty$-category is a $t$-structure on its homotopy category, which is a triangulated category. To describe these $t$-structures, we define the Chow-Novikov degree of an element that belongs to the bigraded homotopy groups of a motivic spectrum.

 
\begin{defn}
For any motivic spectrum $X$, consider its bigraded motivic homotopy groups $$\pi_{s,w}X.$$ Here $s$ is the topological degree under the Betti realization, and $w$ is the motivic weight. The \emph{Chow-Novikov degree} of an element in $\pi_{s,w}X$ is defined as $s-2w$.
	
	We say that $\pi_{*,*}X$ is \emph{concentrated in Chow-Novikov degrees} $I$, where $I$ is a set of integers, if all nonzero elements in $\pi_{*,*}X$ are concentrated in Chow-Novikov degrees belonging to $I$.
\end{defn}

For example, the homotopy groups of $\textup{MU}^{\textup{mot}}/\tau$ are concentrated in Chow-Novikov degree 0, while the homotopy groups of $\textup{MU}^{\textup{mot}}$ are concentrated in non-negative even Chow-Novikov degrees.

\begin{defn}\leavevmode \label{def:thicksub}
\begin{enumerate}
	\item We define 
	$$\textup{MU}^{\textup{mot}}/\tau\text{-}\mathbf{Mod}_{\textup{cell}}^b$$ as the stable full subcategory of
	\hbox{$\textup{MU}^{\textup{mot}}/\tau\text{-}\mathbf{Mod}_{\textup{cell}}$} spanned by objects whose homotopy groups are concentrated in bounded Chow-Novikov degrees.
	\item We define $$\textup{MU}^{\textup{mot}}/\tau\text{-}\mathbf{Mod}_{\textup{cell}}^{b, \geq 0}, $$ $$ \textup{MU}^{\textup{mot}}/\tau\text{-}\mathbf{Mod}_{\textup{cell}}^{b, \leq 0}, $$ $$ \textup{MU}^{\textup{mot}}/\tau\text{-}\mathbf{Mod}_{\textup{cell}}^{\heartsuit}$$ as the full subcategories of $\textup{MU}^{\textup{mot}}/\tau$-$\mathbf{Mod}_{\textup{cell}}^b$ spanned by objects whose homotopy groups are concentrated in nonnegative, nonpositive and zero Chow-Novikov degrees respectively.
	\item We define $$\widehat{S^{0,0}}/\tau\text{-}\mathbf{Mod}_{\textup{harm}}^b$$ as the stable full subcategory of $\widehat{S^{0,0}}/\tau$-$\mathbf{Mod}_{\textup{harm}}$ spanned by objects whose $\textup{MU}^{\textup{mot}}$-homology groups are concentrated in bounded Chow-Novikov degrees.
	\item We define $$\widehat{S^{0,0}}/\tau\text{-}\mathbf{Mod}_{{\textup{harm}}}^{b, \geq 0},$$ $$\widehat{S^{0,0}}/\tau\text{-}\mathbf{Mod}_{{\textup{harm}}}^{b, \leq 0},$$  $$\widehat{S^{0,0}}/\tau\text{-}\mathbf{Mod}_{{\textup{harm}}}^{\heartsuit}$$ as the full subcategories of \hbox{$\widehat{S^{0,0}}/\tau$-$\mathbf{Mod}_{{\textup{harm}}}^b$} spanned by objects whose $\textup{MU}^{\textup{mot}}$-homology groups are concentrated in nonnegative, nonpositive and zero Chow-Novikov degrees respectively.
\end{enumerate}	
\end{defn}

\begin{defn}
We define 
	$$\textup{MU}_*\text{-}\mathbf{Mod}^\textup{ev}$$
as the abelian category of graded modules that are concentrated in even degrees over the $p$-completed ring $\textup{MU}_*$, and 
	$$\textup{MU}_*\textup{MU}\text{-}\mathbf{Comod}^\textup{ev}$$
	$$\textup{BP}_*\textup{BP}\text{-}\mathbf{Comod}^\textup{ev}$$ 
as the abelian categories of graded comodules that are concentrated in even degrees over the $p$-completed Hopf algebroids $\textup{MU}_*\textup{MU}$ and $\textup{BP}_*\textup{BP}$.
	
We define 
$$\mathcal{D}^b(\textup{MU}_*\text{-}\mathbf{Mod}^\textup{ev}),$$ 
$$\mathcal{D}^b(\textup{MU}_*\textup{MU}\text{-}\mathbf{Comod}^\textup{ev})$$
$$\mathcal{D}^b(\textup{BP}_*\textup{BP}\text{-}\mathbf{Comod}^\textup{ev})$$ 
as their bounded derived categories.
\end{defn}

\begin{prop} \cite[Proposition~1.2.3]{Mor} \label{prop:bpbpmumucomod}
At each prime p, the categories
${\textup{BP}_*\textup{BP}\text{-}\mathbf{Comod}}^{\textup{ev}}$ and  $\textup{MU}_*\textup{MU}\text{-}\mathbf{Comod}^\textup{ev}$ are equivalent as abelian categories.
\end{prop}

The abelian categories of modules over $p$-completed $\textup{MU}_*$ and $\text{BP}_*$ are \emph{not} equivalent. However, Proposition~\ref{prop:bpbpmumucomod} states that the abelian categories of even comodules over $p$-completed $\textup{MU}_*\textup{MU}$ and $\textup{BP}_*\textup{BP}$ are equivalent. We will work with $\textup{MU}$ and $\textup{MU}^{\textup{mot}}$ since they are $E_\infty$-algebras in the corresponding categories while $\textup{BP}$ is not, due to a recent result of Lawson \cite{Law}. 

\begin{thm}\leavevmode \label{thm:mumottaumod}
	\begin{enumerate}
		\item The full subcategories $\textup{MU}^{\textup{mot}}/\tau$-$\mathbf{Mod}_{\textup{cell}}^{b, \geq 0}$ and $\textup{MU}^{\textup{mot}}/\tau$-$\mathbf{Mod}_{\textup{cell}}^{b, \leq 0}$ define a $t$-structure on $\textup{MU}^{\textup{mot}}/\tau$-$\mathbf{Mod}_{\textup{cell}}^b$.
		\item The functor
		 $$\pi_{*,*}: \textup{MU}^{\textup{mot}}/\tau\text{-}\mathbf{Mod}_{\textup{cell}}^{\heartsuit} \longrightarrow \textup{MU}_*\text{-}\mathbf{Mod}^\textup{ev}$$
		 is an equivalence.
		 \item There exists an equivalence of stable $\infty$-categories 		
		 $$\textup{MU}^{\textup{mot}}/\tau\text{-}\mathbf{Mod}_{\textup{cell}}^b \longrightarrow \mathcal{D}^b(\textup{MU}_*\text{-}\mathbf{Mod}^\textup{ev}),$$
		that preserves the given $t$-structures and extends the functor $\pi_{*,*}$ on the heart.
	\end{enumerate}
\end{thm}

\begin{remark} \label{remark:bigradevs}
The functor $\pi_{*,*}$ naturally lands in the category of bigraded modules over the bigraded ring $\textup{MU}^{\textup{mot}}_{*,*}/\tau$. Since all elements of this bigraded ring are concentrated in Chow-Novikov degree 0, it can be identified as the single graded ring $\textup{MU}_*$ by forgetting the motivic weight. A similar comment applies to the following theorem as well.
\end{remark}

\begin{thm}\leavevmode \label{thm:ctaumod}
	\begin{enumerate}
		\item The full subcategories $\widehat{S^{0,0}}/\tau$-$\mathbf{Mod}_{{\textup{harm}}}^{b, \geq 0}$ and $\widehat{S^{0,0}}/\tau\text{-}\mathbf{Mod}_{{\textup{harm}}}^{b, \leq 0}$ define a $t$-structure on\\ $\widehat{S^{0,0}}/\tau$-$\mathbf{Mod}_{\textup{harm}}^b$.
		\item The functor
		 $$\textup{MU}^{\textup{mot}}_{*,*}: \widehat{S^{0,0}}/\tau\text{-}\mathbf{Mod}_{\textup{harm}}^{\heartsuit} \longrightarrow \textup{MU}_*\textup{MU}\text{-}\mathbf{Comod}^\textup{ev}$$
		 is an equivalence.
		 \item There exists an equivalence of stable $\infty$-categories 
		$$\widehat{S^{0,0}}/\tau\text{-}\mathbf{Mod}_{\textup{harm}}^b \longrightarrow \mathcal{D}^b(\textup{MU}_*\textup{MU}\text{-}\mathbf{Comod}^\textup{ev}),$$
		 that preserves the given $t$-structures and extends the functor $\textup{MU}^{\textup{mot}}_{*,*}$ on the heart.
	\end{enumerate}
\end{thm}

\begin{remark}
The statements in Theorem~\ref{thm:mumottaumod} and Theorem~\ref{thm:ctaumod} can be connected by the following commutative diagram of stable $\infty$-categories with $t$-structures.
\begin{displaymath}
\xymatrix{
 \widehat{S^{0,0}}/\tau\text{-}\mathbf{Mod}_{\textup{harm}}^b \ar[r] \ar[d]_{-\wedge_{\widehat{S^{0,0}}/\tau} \textup{MU}^{\textup{mot}}/\tau} & \mathcal{D}^b(\textup{MU}_*\textup{MU}\text{-}\mathbf{Comod}^\textup{ev}) \ar[d] \\
 \textup{MU}^{\textup{mot}}/\tau\text{-}\mathbf{Mod}_{\textup{cell}}^b \ar[r]  & 
	\mathcal{D}^b(\textup{MU}_*\text{-}\mathbf{Mod}^\textup{ev}) }
\end{displaymath}
The vertical functor on the right is the forgetful functor.
\end{remark}

\begin{remark}
From a deformation perspective, our Theorem~\ref{thm:ctaumod} gives a new connection between the moduli stack of formal groups and the classical stable homotopy theory.

From the deformation 
$$\widehat{S^{0,0}}/\tau \xleftarrow{\textup{special\ fiber}} \widehat{S^{0,0}} \xrightarrow{\textup{generic\ fiber}} \tau^{-1}\widehat{S^{0,0}} $$
parametrized by $\tau$,
we have two adjunctions of stable $\infty$-categories:
$$(\widehat{S^{0,0}}\text{-}\mathbf{Mod})_{\tau=0} \adjre \widehat{S^{0,0}}\text{-}\mathbf{Mod} \adj
(\widehat{S^{0,0}}\text{-}\mathbf{Mod})_{\tau=1}.$$
We call this deformation a ``motivic deformation" intuitively.

By Dugger-Isaksen \cite{DuggerIsaksenMASS}, on the generic fiber, the full subcategory of cellular objects in
$$(\widehat{S^{0,0}}\text{-}\mathbf{Mod})_{\tau=1} := \tau^{-1}\widehat{S^{0,0}}\text{-}\mathbf{Mod}$$
is equivalent to the classical stable homotopy category at the prime p. In fact, Dugger-Isaksen showed that the motivic homotopy groups of the $\tau$-inverted sphere spectrum are isomorphic to that of the classical sphere spectrum. By an inductive argument, one can show that a similar statement is true for all finite cellular objects. This shows that $\tau$-inverted Betti realization functor is fully faithful. It is also essentially surjective since the Betti realization functor admits a section with constant weight zero. An Ind-object argument (similar to the proof of Corollary~1.2) gives us the claim.

Our main theorem shows that, on the special fiber, the full subcategory of harmonic objects in the category 
$$(\widehat{S^{0,0}}\text{-}\mathbf{Mod})_{\tau=0} := \widehat{S^{0,0}}/\tau\text{-}\mathbf{Mod}$$ 
is equivalent to the derived category of comodules that are concentrated in even degrees over the p-completed Hopf algebroid $\textup{MU}_*\textup{MU}$. By Quillen's theorem \cite{Qui}, the latter can be identified with the derived category of quasi-coherent sheaves on the moduli stack of formal groups over $\mathbb{Z}_p$-algebras.
\end{remark}

\begin{remark}
In our proof of Theorem \ref{thm:ctaumod}, we set up a strongly convergent motivic Adams-Novikov spectral sequence in the category $\widehat{S^{0,0}}/\tau\text{-}\mathbf{Mod}_{\textup{cell}}^b$,
$$
\Ext^{*,*,*}_{\textup{MU}^{\textup{mot}}_{*,*}\textup{MU}^{\textup{mot}}/\tau}(\textup{MU}^{\textup{mot}}_{*,*}X, \textup{MU}^{\textup{mot}}_{*,*}Y) \Longrightarrow \left[\Sigma^{*,*} X, Y^{\smas}_{\textup{MU}^{\textup{mot}}} \right]_{\widehat{S^{0,0}}/\tau}.
$$
This is stated as Theorem \ref{thm:ANss} in Section 5. Classically, the Adams-Novikov spectral sequence is set up such that the first variable is the sphere spectrum. Our construction could be generalized to an abstract setting and applied to the classical situation when the first variable $X$ is arbitrary. We will discuss this case in a general framework in future work.
\end{remark}


\subsection{The motivic Adams spectral sequence and the algebraic Novikov spectral sequence}

The following Theorem \ref{thm:iso13} establishes an isomorphism between the algebraic Novikov spectral sequence and the motivic Adams spectral sequence for $\widehat{S^{0,0}}/\tau$.

\begin{thm} \label{thm:iso13}
At each prime $p$, there is an isomorphism of tri-graded spectral sequences: the motivic Adams spectral sequence for $\widehat{S^{0,0}}/\tau$, which converges to the motivic homotopy groups of $\widehat{S^{0,0}}/\tau$, and the re-graded algebraic Novikov spectral sequence, which converges to the Adams-Novikov $E_2$-page for the sphere.


The indexes are indicated in the following diagram:
\begin{displaymath}
    \xymatrix{
  \Ext_{\textup{BP}_*\textup{BP}/I}^{s,2w}(\mathbb{F}_p, I^{a-s}/I^{a-s+1}) \ar@{=>}[dd]|{\mathbf{Algebraic \ Novikov \ SS}} \ar[rr]^{\cong} & & \Ext_{{A}_{*,*}^{\textup{mot}}}^{a, 2w-s+a,w}(\mathbb{F}_p[\tau], \mathbb{F}_p) \ar@{=>}[dd]|{\mathbf{Motivic \ Adams \ SS}} \\
  & & \\
  \Ext_{\textup{BP}_*\textup{BP}}^{s,2w}(\textup{BP}_*, \textup{BP}_*) \ar[rr]^{\cong} & & \pi_{2w-s,w}(\widehat{S^{0,0}}/\tau).
    }
\end{displaymath}

Here $I=(p,v_1,v_2,\dots)$ is the augmentation ideal of $\textup{BP}_*$ and ${A}_{*,*}^\textup{{mot}}$ is the motivic mod $p$ dual Steenrod algebra.
\end{thm}

Recall that both of the two spectral sequences in Theorem~\ref{thm:iso13} are multiplicative, so there are multiplicative filtrations on the abutments.  

\begin{thm} \label{thm:iso14}
There is an isomorphism between $\Ext_{\textup{BP}_*\textup{BP}}^{*,*}(\textup{BP}_*, \textup{BP}_*)$ and $\pi_{*,*}(\widehat{S^{0,0}}/\tau)$ that preserves the multiplicative filtrations, composition products and higher compositions in the respective categories.
\end{thm}

\begin{proof}
	The multiplicative structure on the abutments comes from composition of morphisms in both categories $\widehat{S^{0,0}}/\tau\text{-}\mathbf{Mod}_{\textup{harm}}^b$ and $\mathcal{D}^b({\textup{BP}_*\textup{BP}\text{-}\mathbf{Comod}}^{\textup{ev}})$. The isomorphism on abutments is induced by the equivalence of categories in Theorem~\ref{thm:ctaumod}, in particular respects compositions.
\end{proof}


The isomorphism between the abutments is known to Isaksen \cite[Proposition 6.2.5]{StableStems} and the first author \cite[Corollary 3.14]{GheCt}. Our Theorem~\ref{thm:iso14} further states that the isomorphism preserves the multiplicative filtrations on the abutments. We do not prove that the group isomorphism on the $E_2$-pages is also a ring isomorphism. We shall prove it in future work.

There has been huge interest in obtaining information on the stable homotopy groups of spheres by comparing the Adams spectral sequence with the Adams-Novikov spectral sequence. In fact, this is a dream entertained by Novikov \cite[Section 12]{Novikov}. See also \cite{Ravenel, MillerAdams, MillerThesis} for example. An important connection and technique of studying both spectral sequences is the following the Miller square \cite{MillerAdams}.
 \begin{displaymath}
    \xymatrix{
  & & \Ext^{s,t}_{P_*}(\mathbb{F}_p, I^{a-s}/I^{a-s+1}) \ar@{=>}[ddll]|{\mathbf{Cartan\textup{-}Eilenberg \ SS}} \ar@{=>}[ddrr]|{\mathbf{Algebraic \ Novikov \ SS}} & &  \\
  & & & & \\
 \Ext_{A_*}^{a,t}(\mathbb{F}_p, \mathbb{F}_p) \ar@{=>}[ddrr]|{\mathbf{Adams \ SS}} & & & & \Ext^{s,t}_{\textup{BP}_*\textup{BP}}(\textup{BP}_*,\textup{BP}_*) \ar@{=>}[ddll]|{\mathbf{Adams\textup{-}Novikov \ SS}}\\
  & & & & \\
  & & \pi_{*}\widehat{S^0}  & &
    }
\end{displaymath}
By a change-of-ring isomorphism, the $E_2$-page of the Cartan-Eilenberg spectral sequence, which computes the Adams $E_2$-page, is isomorphic to the algebraic Novikov spectral sequence, which computes the Adams-Novikov $E_2$-page. For $p$ odd, the Cartan-Eilenberg spectral sequence collapses for degree reasons.

To explore this square, Miller \cite{MillerAdams} smashes together the Adams resolution and the Adams-Novikov resolution, and gets a comparison theorem on the $d_2$-differentials in the algebraic Novikov spectral sequence and the Adams spectral sequence. The following theorem is due to Novikov \cite{Novikov}, Miller \cite[Theorem 4.2]{MillerAdams} and Andrews-Miller \cite[Theorem 9.3.3]{AM16}.


\begin{thm} \label{thm:mil}
Suppose $z'$ is an element in $\Ext_{A_*}^{a,t}(\mathbb{F}_p, \mathbb{F}_p)$ that has Cartan-Eilenberg filtration $s$. Then $d_2^{\mathbf{ASS}}z'$ has higher Cartan-Eilenberg filtration.

Moreover, if $z'$ is detected in the Cartan-Eilenberg spectral sequence by\\ $z$ in $\Ext^{s,t}_{P_*}(\mathbb{F}_p, I^{a-s}/I^{a-s+1})$, then  
	$$d_2^{\mathbf{ASS}}z' \ \text{is detected by} \ d_2^{\mathbf{algNSS}} z,$$
where $d_2^{\mathbf{algNSS}} z$ is in $\Ext^{s+1,t}_{P_*}(\mathbb{F}_p, I^{a-s+1}/I^{a-s+2})$.
\end{thm}

\begin{remark}
We regraded the algebraic Novikov spectral sequence so our $d_2^{\mathbf{algNSS}}$	is $d_1^{\mathbf{algNSS}}$ in \cite[Theorem 9.3.3]{AM16}.
\end{remark}

Based on the Miller square and Theorem \ref{thm:mil}, Miller \cite{MillerAdams} proves the Telescope Conjecture at chromatic height 1 at odd primes.
 
To understand the connection between $\mathbf{higher}$ differentials in the Adams and algebraic Novikov spectral sequences, it would be desirable to establish new connections between them. 

For example, suppose in general that we have two spectral sequences 
$$E_2\Rightarrow E_\infty, \ E_2'\Rightarrow E_\infty'$$ 
that are not necessarily connected by a homomorphism of spectral sequences. To compare them, it would be useful to have a third spectral sequence 
$$E_2''\Rightarrow E_\infty''$$ 
making a zig-zag diagram of spectral sequences.
\begin{displaymath}
 \xymatrix{
 E_2  \ar@{=>}[d]& E_2'' \ar[l]\ar[r]\ar@{=>}[d] & E_2' \ar@{=>}[d] \\
 E_\infty & E_\infty'' \ar[l]\ar[r] & E_\infty'
 }
\end{displaymath}
This is the one of the major techniques used by the second and third authors in \cite{WX} to explore the Mahowald square \cite{Mah} and compute differentials in the Adams spectral sequences. 
 
Following this philosophy, for the Miller square \cite{MillerAdams}, a basic question would be: Which spectral sequence can we put in between these two spectral sequences and have a zig-zag diagram?
\begin{displaymath}
  \xymatrix{
 \Ext^{s,t}_{P_*}(\mathbb{F}_p, I^{a-s}/I^{a-s+1})   \ar@{=>}[dd]|{\mathbf{Algebraic \ Novikov \ SS}} & & ?  \ar[ll] \ar[rr]\ar@{=>}[dd] & & \Ext_{A_*}^{a,t}(\mathbb{F}_p, \mathbb{F}_p) \ar@{=>}[dd]|{\mathbf{Adams \ SS}} \\
 & & & & \\
\Ext^{s,t}_{\textup{BP}_*\textup{BP}}(\textup{BP}_*, \textup{BP}_*) & & ? \ar[ll] \ar[rr] & & \pi_{*}\widehat{S^0} 
}
\end{displaymath}
 
Our Theorem \ref{thm:iso13} shows that we can achieve a zig-zag diagram in the motivic world.
 
In fact, consider the H$\mathbb{F}_p^{\textup{mot}}$-completed motivic sphere spectrum $\widehat{S^{0,0}}$. Inverting $\tau$, we get the classical $p$-completed sphere $\widehat{S^0}$ by Dugger-Isaksen \cite{DuggerIsaksenMASS}, in the sense that the corresponding Adams and Adams-Novikov spectral sequences have equivalent data. On the other hand, reducing mod $\tau$, we get $\widehat{S^{0,0}}/\tau$. Then the naturality of the Adams spectral sequences gives us a zig-zag diagram.
\begin{displaymath}
  \xymatrix{
 \Ext^{s,t}_{A_{*,*}^{mot}}(\mathbb{F}_p[\tau], \mathbb{F}_p)   \ar@{=>}[dd]|{\mathbf{Motivic \ Adams \ SS}} & &  \Ext^{s,t}_{A_{*,*}^{mot}}(\mathbb{F}_p[\tau], \mathbb{F}_p[\tau]) \ar[ll]\ar[rr]\ar@{=>}[dd]|{\mathbf{Motivic \ Adams \ SS}} & & \Ext_{A_*}^{a,t}(\mathbb{F}_p, \mathbb{F}_p) \ar@{=>}[dd]|{\mathbf{Adams \ SS}} \\
 & & & & \\
\pi_{*,*}\widehat{S^{0,0}}/\tau & & \pi_{*,*}\widehat{S^{0,0}} \ar[ll]\ar[rr]^-{\mathbf{Re}} & & \pi_{*}\widehat{S^0} 
}
\end{displaymath}
By Theorem \ref{thm:iso13}, the left side spectral sequence, which is the motivic Adams spectral sequence for $\widehat{S^{0,0}}/\tau$, is isomorphic to the algebraic Novikov spectral sequence. 
 
More generally, we have the following motivic square.
\begin{displaymath}
    \xymatrix{
  & & 
  \Ext_{A_{*,*}^{mot}}^{*,*,*}(\mathbb{F}_p[\tau], \mathbb{F}_p)[\tau] \ar@{=>}[ddll]|{\mathbf{Algebraic \ }\tau\textup{-}\mathbf{Bockstein \ SS}} \ar@{=>}[ddrr]
  |{\mathbf{Motivic \ Adams \ SS}}
  & &  \\
  & & & & \\
 \Ext_{A_{*,*}^{mot}}^{*,*,*}(\mathbb{F}_p[\tau], \mathbb{F}_p[\tau]) \ar@{=>}[ddrr]|{\mathbf{Motivic \ Adams \ SS}}  & & & & \pi_{*,*}\widehat{S^{0,0}}/\tau[\tau] \ar@{=>}[ddll]|{\tau\textup{-}\mathbf{Bockstein \ SS}}\\
  & & & & \\
  & & 
  \pi_{*,*}\widehat{S^{0,0}}  
  & &
    }
\end{displaymath}
Let's compare the motivic square with the Miller square.

For the lower right side, it is proved by Isaksen \cite{StableStems} that the motivic Adams-Novikov spectral sequence for $\widehat{S^{0,0}}$ is isomorphic to the $\tau$-Bockstein spectral sequence, and that it is rigid, in the sense that it contains the same information as the classical Adams-Novikov spectral sequence. Each nontrivial differential in the classical Adams-Novikov spectral sequence corresponds to a family of nontrivial differentials in the motivic Adams-Novikov spectral sequence, that are connected to each other by multiplication by $\tau$. We can recover all nonzero differentials in the motivic Adams-Novikov spectral sequence by knowing all nonzero differentials in the classical Adams-Novikov spectral sequence, and vice versa. 

We'd like to point out that the above isomorphism between the motivic Adams-Novikov spectral sequence for the motivic sphere and the $\tau$-Bockstein spectral sequence for the motivic sphere does not come from a map between two towers. In particular, the Chow-Novikov degree  compresses motivic Adams-Novikov $d_{2r+1}$-differentials to $\tau$-Bockstein $d_r$-differentials, and the motivic Adams-Novikov $E_2$-page is isomorphic to the $\tau$-Bockstein $E_1$-page, which is isomorphic to the homotopy groups of a motivic spectrum. Moreover, the edge map of the $\tau$-Bockstein spectral sequence has the advantage of being induced by an actual map between motivic spectra, so naturality applies to motivic homotopy groups. Naturality also gives us a map of the motivic Adams spectral sequences from the motivic sphere to $\widehat{S^{0,0}}/\tau$, where the latter is isomorphic to the classical algebraic Novikov spectral sequence.

For the upper left side, the relation of the two spectral sequences in the motivic square and the Miller square is the same as the relation on the lower right side. The algebraic $\tau$-Bockstein spectral sequence can be thought as a motivic version of the Cartan-Eilenberg spectral sequence, and contains the same information, in the same sense as the lower right side situation.

For the upper right side, our Theorem \ref{thm:iso13} says that the two spectral sequences are isomorphic.

Therefore, for three out of the four sides, the motivic square contains exactly the same information as the ones in the Miller square. 

For the remaining lower left side, Dugger-Isaksen \cite{DuggerIsaksenMASS} show that the $\tau$-inverted motivic Adams spectral sequence is isomorphic to the $\tau$-inverted classical Adams spectral sequence. This means that the difference between the motivic square and the Miller square lies in the $\tau$-torsion information. Therefore, when comparing the higher differentials in the classical and motivic Adams spectral sequences, the $\tau$-torsion information is necessary to make the zig-zag strategy work. 

Now, to compute a nontrivial classical Adams differential, for any $r$, start with an algebraic Novikov $d_r$-differential. Theorem \ref{thm:iso13} gives us a motivic Adams $d_r$-differential for $\widehat{S^{0,0}}/\tau$. Pulling back to the bottom cell of $\widehat{S^{0,0}}/\tau$ of the source element gives us a motivic Adams $d_{r'}$-differential for the motivic sphere with $r' \leq r$. Using the Betti realization functor, we then obtain a classical Adams $d_{r'}$-differential!

In practice, Isaksen, the second and the third authors \cite{IWX} extend the computation of classical and motivic stable stems into a large range using the following steps.

\begin{enumerate}
	\item Use a computer to carry out the entirely algebraic computation of the cohomology of the $\mathbb{C}$-motivic Steenrod algebra. These groups serve as the input to the $\mathbb{C}$-motivic Adams spectral sequence.
	\item Use a computer to carry out the entirely algebraic computation of the algebraic Novikov spectral sequence that converges to the cohomology of the Hopf algebroid $(\textup{BP}_*, \textup{BP}_*\textup{BP})$. This includes all differentials, and the multiplicative structure of the cohomology of $(\textup{BP}_*, \textup{BP}_*\textup{BP})$.
	\item Use Theorem \ref{thm:iso13} to identify the algebraic Novikov spectral sequence with the motivic Adams spectral sequence that computes the homotopy groups of $\widehat{S^{0,0}}/\tau$. This includes an identification of the cohomology of $(\textup{BP}_*, \textup{BP}_*\textup{BP})$ with the homotopy groups of $\widehat{S^{0,0}}/\tau$.
	\item Use the inclusion of the bottom cell and the projection to the top cell to pull back and push forward Adams differentials for $\widehat{S^{0,0}}/\tau$ to Adams differentials for the motivic sphere.
	\item Apply a variety of ad hoc arguments to deduce additional Adams differentials for the motivic sphere. The most important method involves shuffling Toda brackets.
	\item Use a long exact sequence in homotopy groups to deduce hidden $\tau$-extensions in the motivic Adams spectral sequence for the sphere.
	\item Invert $\tau$ to obtain the classical Adams spectral sequence and the classical stable homotopy groups.
\end{enumerate}

We'd like to highlight a few consequences of our stem-wise computations.

\begin{example}
Consider the following four differentials in the classical Adams spectral sequence for the $2$-completed sphere. 
\begin{enumerate}
\item There is a $d_3$ differential in the 15-stem
$$d_3(h_0h_4) = h_0d_0.$$
This is proved by May and Mahowald-Tangora in \cite{May2, MahowaldTangora} by comparing with Toda's unstable computations \cite{Todacomp}. 

\item There is a $d_4$ differential in the 38-stem
$$d_4(h_3h_5) = h_0x.$$
This is proved in Mahowald-Tangora \cite{MahowaldTangora} by an ad-hoc method using a certain finite CW spectrum. 

\item There is a $d_3$ differential in the 38-stem
$$d_3(e_1) = h_1t.$$
This is proved by Bruner in \cite{Br1} by power operations in the Adams spectral sequence.

\item
There is a $d_3$ differential in the 61-stem
$$d_3(D_3) = B_3.$$
This is proved by the second and third authors \cite{WX} using the $RP^\infty$-technique. The proof of this differential in \cite{WX} is a significant part of the proof that the 61-sphere has a unique smooth structure.
\end{enumerate}
It turns out that all these four differentials can be proved by our method. They all correspond to nontrivial differentials in the algebraic Novikov spectral sequence with the same length, and therefore are all consequences of purely algebraic computations and our Theorem \ref{thm:iso13}. 
\end{example}

\begin{remark}
For some of the differentials computed by Isaksen, the second and the third authors \cite{IWX} using Theorem \ref{thm:iso13},
our method gives the only known proof. For example, we prove an Adams $d_3$-differential in the 68-stem
$$d_3(d_2) = h_0^2Q_3,$$
which shows the non-existence of the homotopy class $\kappa_2$ in $\pi_{68}$. As another example, we prove an Adams $d_5$-differential in the 92-stem
$$d_5(g_3) = h_6d_0^2,$$
which shows the non-existence of the homotopy class $\overline{\kappa}_3$ in $\pi_{92}$. Since both the elements $d_2$ and $g_3$ lie in a nonzero $Sq^0$-family in the 4-line of the classical Adams $E_2$-page, the two new nontrivial differentials serve as new evidence of Minami's New Doomsday Conjecture.
\end{remark}

\begin{remark}
Theorem \ref{thm:iso13} can also be used to compute nontrivial extensions and Toda brackets. For example, there is an $\eta$-extension from $h_3d_1$ to $N$ in the 46-stem. This is proved by the second and third authors \cite[Proposition 1.3(2)]{WX2} using the $RP^\infty$-technique. As another example, there is a Toda bracket 
$$\langle \theta_4, 2, \sigma^2 \rangle$$
	in the 45-stem. It is computed by Isaksen in \cite[Lemma 4.2.91]{StableStems} by ad hoc methods. This Toda bracket computation is crucial in the third author's proof \cite{Xu1} that 
	$$2\theta_5 = 0$$ 
	in the 62-stem. Both the nontrivial $\eta$-extension and the Toda bracket computations are present in the motivic homotopy groups of $\widehat{S^{0,0}}/\tau$. By Theorem \ref{thm:iso13}, they can be computed by the product and Massey product structure on the classical Adams-Novikov $E_2$-page. In particular, the corresponding 3-fold Massey product can be verified in the algebraic Novikov spectral sequence using May's convergence theorem \cite{May}. Therefore, both the nontrivial $\eta$-extension and the Toda bracket computations are consequences of purely algebraic computations and our Theorem \ref{thm:iso13}.
\end{remark}

\subsection{Organization}

This paper is organized in two parts. 

In Part 1, we prove the equivalence of stable $\infty$-categories in Theorem \ref{thm:mumottaumod} and Theorem \ref{thm:ctaumod}. Our proofs use a theorem of Lurie in Higher Algebra \cite{HA} on the relation between a stable $\infty$-category with a $t$-structure and the derived category of its heart. We recall Lurie's theorem in Section 2, and prove Theorem \ref{thm:mumottaumod} and Theorem \ref{thm:ctaumod} in Section 3 and 4. We also prove Corollary~1.2 in the end of Section 4. We introduce the absolute Adams-Novikov spectral sequence in the category $\widehat{S^{0,0}}/\tau\text{-}\mathbf{Mod}_{\textup{harm}}^b$ in Section~5, which is necessary for our proof of Theorem \ref{thm:ctaumod}. We propose some further questions in Section~6. We discuss the H$\mathbb{F}_p^\textup{mot}$-completion in Section~7.

In Part 2, we prove the isomorphism of spectral sequences in Theorem \ref{thm:iso13}.  In Section~\ref{eight}, we check that, through the equivalence of stable $\infty$-categories in Theorem \ref{thm:ctaumod}, the algebraic Novikov tower in the derived category of $\textup{BP}_*\textup{BP}$-comodules corresponds to the motivic Adams tower of $\widehat{S^{0,0}}/\tau$ in the category of $\widehat{S^{0,0}}/\tau$-modules. In Section 10, we re-compute certain low filtration and historically more difficult differentials in the range up to the 45-stem at the prime 2, as an illustration of the power of the isomorphism of spectral sequences in Theorem \ref{thm:iso13}.  


\subsection{Conventions and Notations}
All colimits and limits in a stable $\infty$-category of spectra mean homotopy colimit and homotopy limit in the classical sense.

All modules over graded rings are graded modules. 

Here is a summary of a list of the notations we use in this paper.
\begin{enumerate}
\item $S^{1,0}$: the simplicial sphere $S^1$.
\item  $S^{1,1}$: the multiplicative group $\mathbb{G}_m = \mathbb{A}^1 - 0$.
\item H$\mathbb{F}_p^\textup{mot}$: the mod $p$ motivic Eilenberg-Mac Lane spectrum.
\item $\widehat{S^{0,0}}$: the motivic H$\mathbb{F}_p^\textup{mot}$-completed sphere spectrum.
\item $\widehat{S^{s,w}}$: the motivic H$\mathbb{F}_p^\textup{mot}$-completed sphere spectrum in bidegree $(s,w)$.
\item $\widehat{S^{0,0}}/\tau$: the cofiber of $\tau$.
\item $\C\text{-}\mathbf{mot}\text{-}\mathbf{Spectra}$: the stable $\infty$-category of motivic spectra over $\Spec \C$.
\item $\widehat{S^{0,0}}\text{-}\mathbf{Mod}$: the stable $\infty$-category of motivic module spectra over $\widehat{S^{0,0}}$.
\item $\Sigma^{s,w}( - )$: the suspension functor $\widehat{S^{s,w}} \wedge_{\widehat{S^{0,0}}} -$.
\item $R\text{-}\mathbf{Mod}$: the stable $\infty$-category of left modules over $R$ in $\widehat{S^{0,0}}\text{-}\mathbf{Mod}$, for an $E_\infty$-algebra $R \in \widehat{S^{0,0}}\text{-}\mathbf{Mod}$.
\item $R\text{-}\mathbf{Mod}_{\textup{cell}}$: the stable $\infty$-category of cellular objects in $R\text{-}\mathbf{Mod}$.
\item MGL: the cellular motivic algebraic cobordism spectrum.
\item $\textup{MU}^\textup{mot}$: the $E_\infty$-algebra $\textup{MGL} \wedge_{S^{0,0}} \widehat{S^{0,0}}$ in $\widehat{S^{0,0}}\text{-}\mathbf{Mod}_\textup{cell}$.
\item $\textup{MU}^{\textup{mot}}/\tau$: the $E_\infty$-algebra $\widehat{S^{0,0}}/\tau \wedge_{\widehat{S^{0,0}}} \textup{MU}^{\textup{mot}}$ in $\widehat{S^{0,0}}/\tau\text{-}\mathbf{Mod}_\textup{cell}$.
\item $\widehat{S^{0,0}}/\tau\text{-}\mathbf{Mod}_{\textup{harm}}$: the stable $\infty$-category of harmonic $\widehat{S^{0,0}}/\tau$-module spectra.
\item The Chow-Novikov degree of an element in $\pi_{s,w}X$: $s-2w$, for $X$ a motivic spectrum.
\item $\textup{MU}^{\textup{mot}}/\tau\text{-}\mathbf{Mod}_{\textup{cell}}^b$: the stable $\infty$-category of
	cellular $\textup{MU}^{\textup{mot}}/\tau$-modules whose homotopy groups are concentrated in bounded Chow-Novikov degrees.
\item $\textup{MU}^{\textup{mot}}/\tau\text{-}\mathbf{Mod}_{\textup{cell}}^{b, \geq 0}, \ \textup{MU}^{\textup{mot}}/\tau\text{-}\mathbf{Mod}_{\textup{cell}}^{b, \leq 0}, \ \textup{MU}^{\textup{mot}}/\tau\text{-}\mathbf{Mod}_{\textup{cell}}^{\heartsuit}$: full subcategories of\\ $\textup{MU}^{\textup{mot}}/\tau\text{-}\mathbf{Mod}_{\textup{cell}}^b$ spanned by objects whose homotopy groups are concentrated in nonnegative, nonpositive and zero Chow-Novikov degrees. 
\item $\widehat{S^{0,0}}/\tau\text{-}\mathbf{Mod}_{\textup{harm}}^b$: the stable $\infty$-category of harmonic $\widehat{S^{0,0}}/\tau$-modules whose $\textup{MU}^{\textup{mot}}$-homology groups are concentrated in bounded Chow-Novikov degrees.
\item $\widehat{S^{0,0}}/\tau\text{-}\mathbf{Mod}_{{\textup{harm}}}^{b, \geq 0}, \ \widehat{S^{0,0}}/\tau\text{-}\mathbf{Mod}_{{\textup{harm}}}^{b, \leq 0}, \ \widehat{S^{0,0}}/\tau\text{-}\mathbf{Mod}_{{\textup{harm}}}^{\heartsuit}$: full subcategories of\\ \hbox{$\widehat{S^{0,0}}/\tau$-$\mathbf{Mod}_{{\textup{harm}}}^b$} spanned by objects whose $\textup{MU}^{\textup{mot}}$-homology groups are concentrated in nonnegative, nonpositive and zero Chow-Novikov degrees.
\item $\textup{MU}_*\text{-}\mathbf{Mod}^\textup{ev}$: the abelian category of graded modules that are concentrated in even degrees over the $p$-completed ring $\textup{MU}_*$.
\item  $\textup{MU}_*\textup{MU}\text{-}\mathbf{Comod}^\textup{ev}$: the abelian category of graded comodules that are concentrated in even degrees over the $p$-completed Hopf algebroid $\textup{MU}_*\textup{MU}$.
\item  $\textup{BP}_*\textup{BP}\text{-}\mathbf{Comod}^\textup{ev}$: the abelian category of graded comodules that are concentrated in even degrees over the $p$-completed Hopf algebroid $\textup{BP}_*\textup{BP}$.
\item $\textup{MU}^{\textup{mot}}_{*,*}/\tau\text{-}\mathbf{Mod}$:
the abelian category of graded left modules over $\textup{MU}^{\textup{mot}}_{*,*}/\tau$.
\item $\textup{MU}^{\textup{mot}}_{*,*}/\tau\text{-}\mathbf{Mod}^0$: 
the abelian category of graded left modules over $\textup{MU}^{\textup{mot}}_{*,*}/\tau$ that are concentrated in Chow-Novikov degree 0.
\item $\textup{MU}^{\textup{mot}}_{*,*}\textup{MU}^{\textup{mot}}/\tau\text{-}\mathbf{Comod}$: 
the abelian category of graded left comodules over the Hopf algebroid $\textup{MU}^{\textup{mot}}_{*,*}\textup{MU}^{\textup{mot}}/\tau$.
\item $\textup{MU}^{\textup{mot}}_{*,*}\textup{MU}^{\textup{mot}}/\tau\text{-}\mathbf{Comod}^0$:  
the abelian category of graded left comodules over the Hopf algebroid $\textup{MU}^{\textup{mot}}_{*,*}\textup{MU}^{\textup{mot}}/\tau$ that are concentrated in Chow-Novikov degree 0.
\item $\mathcal{D}^b(\mathcal{A})$: the bounded derived category of an abelian category $\mathcal{A}$ as a stable $\infty$-category.
\item $\mathbf{Stable}(\textup{BP}_*\textup{BP})$: the underlying stable $\infty$-category of Hovey's unbounded derived category of $\textup{BP}_*\textup{BP}$-comodules.
\end{enumerate}

\subsection{Acknowledgements}
We thank Dan Isaksen for proposing a triangulated category version of Theorem \ref{intro:mainthmpart1} as a conjecture and suggesting the authors to prove it. We also thank him for producing the charts in the appendix. We thank Mark Behrens for his encouragement and comments regarding Theorem \ref{intro:thmequivss}. We thank Lars Hesselholt, Dan Isaksen and Haynes Miller for many helpful comments on the early draft of the paper. We thank S{\o}ren Eiles and MiG (Mini\textup{MU}m intrusion Grid) for helping arranging computing resources, which lead to the discovery of Theorem \ref{intro:thmequivss}. We thank Linquan Ma for pointing out the reference regarding Lemma \ref{lem:tech}. We thank Ben Antieau, Tom Bachmann, Tobi Barthel, Agnes Beaudry, Eva Belmont, Robert Burklund, Paul Goerss, Jesper Grodal, Jeremy Hahn, Mike Hill, Mike Hopkins, Hana Jia Kong, Tyler Lawson, Zhi L\"{u}, Akhil Mathew, Peter May, Doug Ravenel, Nicolas Ricka, John Rognes, Jonathan Rubin, XiaoLin Danny Shi, Donald Stanley, Weiping Zhang and Yifei Zhu for conversations regarding this subject. We thank three anonymous referees for their detailed suggestions and comments. The second author was partially supported by the Danish National Research Foundation through the Centre for Symmetry and Deformation (DNRF92), and by grant NSFC-11801082. The third author was partially supported by the National Science Foundation under Grant No. DMS-1810638.

\part{Equivalence of stable $\infty$-categories} \label{chap:Ctmod}

The question of when the homotopy category of module spectra over a certain ring spectrum is equivalent to the derived category of an abelian category as a triangulated category has been studied in many context by many people. For example, Schwede and Shipley \cite{SS} studied the case for the Eilenberg-Mac Lane spectrum H$R$, where $R$ is a commutative ring, Patchkoria \cite{Pat} studied the case for the complex periodic $K$-theory localized at an odd prime, Greenlees \cite{Gre} studied the case for the rational $S^1$-equivarant sphere spectrum, and Deligne and Goncharov \cite{DG} studied the case for the rational motivic Eilenberg-Mac Lane spectrum H$\mathbb{Q}^{\textup{mot}}$. The answers are positive in these cases. On the other hand, Schwede \cite{Schwede} showed that the classical stable homotopy category is not a derived category.

The goal of Part 1 is to prove that the homotopy category of harmonic $\widehat{S^{0,0}}/\tau$-spectra whose $\textup{MU}^\textup{mot}$-homology are concentrated in bounded Chow-Novikov degrees is equivalent to the bounded derived category of $\textup{MU}_*\textup{MU}$-comodules that are concentrated in even degrees. In fact, we prove Theorem \ref{thm:ctaumod} that there exists an equivalence of stable $\infty$-categories that preserves the given $t$-structures
		$$\widehat{S^{0,0}}/\tau\text{-}\mathbf{Mod}_{\textup{harm}}^b \longrightarrow \mathcal{D}^b(\textup{MU}_*\textup{MU}\text{-}\mathbf{Comod}^\textup{ev}).$$
		
We apply a theorem of Lurie in Higher Algebra \cite[Proposition 1.3.3.7]{HA} on the relation between a stable $\infty$-category with a $t$-structure and the derived category of its heart. As a warm-up, we prove Theorem \ref{thm:mumottaumod} that there exists an equivalence of stable $\infty$-categories that preserves the given $t$-structures
		$$\textup{MU}^{\textup{mot}}/\tau\text{-}\mathbf{Mod}_{\textup{cell}}^b \longrightarrow \mathcal{D}^b(\textup{MU}_*\text{-}\mathbf{Mod}^\textup{ev}).$$		

\section{Lurie's Theorem on $t$-structures} \label{sec:tstructures}

In \cite[Proposition 1.3.3.7]{HA}, Lurie proves a theorem on the relation between a stable $\infty$-category with a $t$-structure and the derived category of its heart. In this section, we state a corollary of Lurie's theorem as Proposition~\ref{prop:projcat}, and its dual version Proposition~\ref{prop:injcat}. Both propositions are used in Section 3 and 4. We will first recall relevant definitions and Lurie's theorem and then prove Proposition~\ref{prop:projcat}.


Let $\catC$ be an $\infty$-category. Denote by $h\catC$ its homotopy category, and by $[-,-]_{\catC}$ the abelian group of homotopy classes of maps in $\catC$. When it is clear from the context, we will also denote it by $[-,-]$. If $\catC$ is further a stable $\infty$-category, denote by $\Sigma$ its translation automorphism.

We recall from \cite[Definition 1.2.1.4]{HA} that a $t$-structure on a stable $\infty$-category $\catC$ is defined as a $t$-structure on its homotopy category $h\catC$, which is a triangulated category. More precisely, we have the following definition.

\begin{defn} \label{def 2.1}
	A \emph{t-structure} on a stable $\infty$-category $\catC$ is a pair of two full subcategories $\Cgeq, \ \Cleq$ that are stable under equivalences, satisfying the following three properties
\begin{enumerate}
\item for $X \in \Cgeq$ and $Y \in \Sigma^{-1} \Cleq$, we have $[X,Y]_{\catC} = 0$,
\item there are inclusions $\Sigma \Cgeq \subseteq \Cgeq, \ \Sigma^{-1} \Cleq \subseteq \Cleq$,
\item for any $X \in \catC$, there exists a fiber sequence
\begin{equation*}
X_{\geq 0} \lto X \lto X_{\leq -1},
\end{equation*}
with $X_{\geq 0} \in \Cgeq$ and $X_{\leq -1} \in \Sigma^{-1}\Cleq$.
\end{enumerate}
\end{defn}

As in \cite{HA}, we use homological indexing convention.

\begin{defn}
Let $\catC$ and $\catC'$ be stable $\infty$-categories equipped with $t$-structures. We say that an exact functor $f:\catC \rightarrow \catC'$ is \emph{right $t$-exact}, if it carries $\catC_{\geq 0}$ to $\catC'_{\geq 0}$. An exact functor $f:\catC \rightarrow \catC'$ is \emph{left $t$-exact}, if it carries $\catC_{\leq 0}$ to $\catC'_{\leq 0}$. A functor is \emph{$t$-exact} if it is both left and right $t$-exact.
\end{defn}

\begin{defn}
Denote by $\Cgeqn$ and $\Cleqn$ the $\infty$-categories $\Sigma^n \Cgeq$ and $\Sigma^n \Cleq$ respectively. For every integer $n$, the subcategories $\Cgeqn$ and $\Cleqn$ sit in adjunctions
\begin{equation*}
\Cgeqn \overunder{}{\tau_{\geq n}}{\adj} \catC \qquad \text{and} \qquad \catC \overunder{\tau_{\leq n}}{}{\adj} \Cleqn,
\end{equation*}
where $\tau_{\geq n}$ and $\tau_{\leq n}$ are called the \emph{$n^{\text{th}}$-truncation functors}.
\end{defn}
Sometimes the truncation functors are post-composed with the inclusion functors, so they land in $\catC$.
\begin{defn}
Denote by $\catC^{+}$ and $\catC^{-}$ the stable full subcategories spanned by \emph{left-bounded} and \emph{right-bounded} objects in $\catC$:
$$\catC^+ = \bigcup_{n \geq 0} \catC_{\leq n},$$
$$\catC^- = \bigcup_{n \geq 0} \catC_{\geq -n},$$
and by $\catCb \coloneqq \catC^+ \cap \catC^-$ be the stable subcategory of \emph{bounded objects}. We say that the $t$-structure is \emph{left-bounded}, \emph{right-bounded}, or \emph{bounded}, if the inclusions of $\catC^+$, $\catC^-$ or $\catCb$ respectively, in $\catC$, is an equivalence.

The intersection $\Cheart = \Cgeq \cap \Cleq$ is called the \emph{heart} of the $t$-structure.
\end{defn}

The $\infty$-category $\Cheart$ is always equivalent to (the nerve of) its homotopy category $h\Cheart$, which is an abelian category (see Remark~1.2.1.12 in \cite{HA}). Following \cite{HA}, we abuse the notation by identifying $\Cheart$ with the abelian category $h\Cheart$.

\begin{defn}
	Let $\catC$ be a stable $\infty$-category equipped with a $t$-structure. We define the \emph{left completion} $\widehat{\catC}$ of $\catC$ to be the limit of the tower
\begin{displaymath}
	\xymatrix{
	\cdots \ar[r]  & \catC_{\leq 2} \ar[r]^{\tau_{\leq 1}} & \catC_{\leq 1} \ar[r]^{\tau_{\leq 0}} & \catC_{\leq 0} \ar[r]^{\tau_{\leq -1}} & \cdots
	}
\end{displaymath}
We say that $\catC$ is \emph{left-complete} if the functor $\catC \lto \widehat{\catC}$ is an equivalence. 
\end{defn}

By Proposition~1.2.1.17 of \cite{HA}, the left completion $\widehat{\catC}$ is again a stable $\infty$-category, inherits a $t$-structure from $\catC$, and is left-complete.

Two important examples of stable $\infty$-categories with $t$-structures are the $\infty$-category of spectra (as discussed in Section 1.4 of \cite{HA}) and the derived $\infty$-category of an abelian category (as discussed in Section 1.3 of \cite{HA}).

\begin{example}
Denote by $\mathbf{Spectra}$ the $\infty$-category of spectra and the two full subcategories
\begin{equation*}
\mathbf{Spectra}_{\geq 0} = \{ X \in \mathbf{Spectra} \st \pi_{n}X = 0 \text{ for } n < 0 \},
\end{equation*}
\begin{equation*}
\mathbf{Spectra}_{\leq 0} = \{ X \in \mathbf{Spectra} \st \pi_{n}X = 0 \text{ for } n > 0 \}.
\end{equation*}
define a $t$-structure. Left and right bounded objects correspond to connective and co-connective spectra respectively, and its heart can be identified with the abelian category of abelian groups. Moreover, as proved in \cite[Proposition 1.4.3.6]{HA}, it is left-complete.
\end{example}

\begin{example}
	Suppose that $\catA$ is an abelian category with enough projective objects. There exists an associated \emph{derived $\infty$-category} $\DD^-(\catA)$, whose objects can be identified with right-bounded chain complexes with values in $\catA$. This $\infty$-category $\DD^-(\catA)$ is stable and its homotopy category $h\DD^-(\catA)$ can be identified as the usual derived category as triangulated categories.



It admits a natural $t$-structure defined by
\begin{itemize}
\item $\DD^-(\catA)_{\geq 0}$ is the full subcategory spanned by the complexes whose homology vanishes in negative degrees,
\item $\DD^-(\catA)_{\leq 0}$ is the full subcategory spanned by the complexes whose homology vanishes in positive degrees.
\end{itemize}

As proved in \cite[Proposition 1.3.3.16]{HA}, this $t$-structure is left complete and right bounded. Moreover, as proved in \cite[Proposition 1.3.3.12]{HA}, the derived $\infty$-category $\DD^-(\catA)$ has a universal property in the sense that if $\catC$ is any stable $\infty$-category equipped with a left complete $t$-structure, then any right exact functor $\catA \rightarrow \catC^{\heartsuit}$ extends (in an essentially unique way) to a right $t$-exact functor $\DD^-(\catA) \rightarrow \catC$.
\end{example}

We have the following recognition criterion due to Lurie \cite[Proposition 1.3.3.7]{HA}.

\begin{prop} \label{prop:Lurieproj}
Let $\catC$ be a stable $\infty$-category equipped with a left complete $t$-structure, whose heart $\catA = h\Cheart$ has enough projective objects. Then there exists an essentially unique $t$-exact functor
$$F:\catD^-(\catA) \longrightarrow \catC$$
extending the inclusion $N(\catA) \simeq \catC^{\heartsuit} \subseteq \catC$. Here $N(\catA)$ is the nerve of the abelian category $\catA$.

Moreover, the following two conditions are equivalent:
\begin{itemize}
	\item The functor $F$ is fully faithful.
	\item For any pair of objects $X, Y \in \catA$, if $X$ is projective, then the abelian groups $[\Sigma^{-i} X, Y]_{\catC}$ vanish for $i>0$.
\end{itemize}
If the conditions are satisfied, then the essential image of $F$ is the full subcategory $\catC^- = \bigcup_n \catC_{\geq n}$ of right bounded objects in $\catC$.
\end{prop}

\begin{remark}
	It is clear that if we restrict the functor $F$ on the bounded stable subcategory $\catD^b(\catA)$, then it gives an equivalence of stable $\infty$-categories
	$$F:\catD^b(\catA) \longrightarrow \catC^b$$
	that preserves $t$-structures.
\end{remark}

\begin{remark}
	Lurie's theorem is exactly the reason we are working with stable $\infty$-categories instead of triangulated categories. Given a triangulated category equipped with a $t$-structure, there in general does not exist a functor from the derived category of the heart to the original triangulated category extending the identity functor on the heart (see Remark~IV~4.13 of \cite{GM} for a more detailed explanation for example). However, if the triangulated category comes from the homotopy category of a stable $\infty$-category, then such a functor always exists. Moreover, Lurie's theorem gives us a recognition criterion in terms of homological algebra to see when such a functor is also an equivalence and preserves $t$-structures.
\end{remark}

Now we use Lurie's theorem to prove the main result of this section.

\begin{prop} \label{prop:projcat}
	Let $\catC$ be a stable $\infty$-category with a given bounded $t$-structure. Suppose that
	\begin{enumerate}
	
		\item the abelian category $\catA = h\catC^{\heartsuit}$ has enough projective objects,
		\item for any pair of objects $X, Y \in \catA$, if $X$ is projective, then the abelian groups $[\Sigma^{-i} X, Y]_{\catC}$ vanish for $i>0$.
	\end{enumerate}
Then there exists an equivalence of stable $\infty$-categories
$$F:\catD^b(\catA) \longrightarrow \catC$$
extending the inclusion $N(\catA) \simeq \catC^{\heartsuit} \subseteq \catC$, and which preserves $t$-structures. Here $N(\catA)$ is the nerve of the abelian category $\catA$ and $\catD^b(\catA)$ is the bounded derived category of $\catA$. 
\end{prop}

\begin{proof}
As explained in Remark 1.2.1.18 in \cite{HA}, for any stable $\infty$-category $\catC$ with a $t$-structure, the functor $\catC \rightarrow \widehat{\catC}$ induces an equivalence
$$\catC^+ \rightarrow {(\widehat{\catC})}^{+}.$$
For the stable $\infty$-category $\catC$ with a bounded $t$-structure in the statement of Proposition~\ref{prop:projcat}, we consider its left-completion $\widehat{\catC}$ so we could apply Proposition 2.10.
Therefore, the equivalence in the statement of Proposition \ref{prop:projcat} comes from the following zigzag of equivalences:
$$\catC \longleftarrow \catC^+ \longrightarrow {(\widehat{\catC})}^{+} \longleftarrow  {(\widehat{\catC})}^{b} \longleftarrow \catD^b(\catA),$$
where the first equivalence comes from the fact that the $t$-structure on $\catC$ is bounded, the third equivalence comes from the fact that the $t$-structure on ${\widehat{\catC}}$ is right bounded since $\catC$ is, and the last equivalence comes from Lurie's theorem and Remark 2.9.
\end{proof}

Considering the opposite category, we have the following dual version of Proposition~\ref{prop:projcat}.

\begin{prop} \label{prop:injcat}
	Let $\catC$ be a stable $\infty$-category with a given bounded $t$-structure. Suppose that
	\begin{enumerate}
		
		\item the abelian category $\catA = h\catC^{\heartsuit}$ has enough injective objects,
		\item for any pair of objects $X, Y \in \catA$, if $Y$ is injective, then the abelian groups $[\Sigma^{-i} X, Y]_{\catC}$ vanish for $i>0$.
	\end{enumerate}
Then there exists an equivalence of stable $\infty$-categories
$$G:\catD^b(\catA) \longrightarrow \catC$$
extending the inclusion $N(\catA) \simeq \catC^{\heartsuit} \subseteq \catC$, and which preserves $t$-structures. Here $N(\catA)$ is the nerve of the abelian category $\catA$ and $\catD^b(\catA)$ is the bounded derived category of $\catA$.
\end{prop}


\section{An algebraic model for cellular $\textup{MU}^{\textup{mot}}/\tau$-modules} \label{subsec:MLGmod}


In this section, we use Proposition \ref{prop:projcat} to prove Theorem \ref{thm:mumottaumod}. Namely, there exists a $t$-exact equivalence of stable $\infty$-categories
		$$\textup{MU}^{\textup{mot}}/\tau\text{-}\mathbf{Mod}_{\textup{cell}}^b \longrightarrow \mathcal{D}^b(\textup{MU}_*\text{-}\mathbf{Mod}^\textup{ev}), $$
		whose restriction on the heart is given by
	$$\pi_{*,*}: \textup{MU}^{\textup{mot}}/\tau\text{-}\mathbf{Mod}_{\textup{cell}}^{\heartsuit} \longrightarrow \textup{MU}_*\text{-}\mathbf{Mod}^\textup{ev}.$$
In Section \ref{subsec:mutaumoducss}, we first recall  the universal coefficient spectral sequence in the category $\textup{MU}^{\textup{mot}}/\tau\text{-}\mathbf{Mod}_{\textup{cell}}$, which is constructed by Dugger-Isaksen \cite{DuggerIsaksen}. This is stated as Theorem \ref{thm:UCTMGL}. Using this spectral sequence, we prove the equivalence on the heart as Proposition \ref{prop:equivMGLheart} in Section 3.2. Then, using this spectral sequence again, we show in Section 3.3 that the full subcategories 
$$\textup{MU}^{\textup{mot}}/\tau\text{-}\mathbf{Mod}_{\textup{cell}}^{b, \geq 0}, \ \textup{MU}^{\textup{mot}}/\tau\text{-}\mathbf{Mod}_{\textup{cell}}^{b, \leq 0}$$
define a $t$-structure. In the end of this section, we prove the equivalence of stable $\infty$-categories as Theorem \ref{thm:equivMGLCellwithalg}.

We will use in Section \ref{subsubsec:ANSS} the above equivalence of stable $\infty$-categories to construct enough motivic spectra to build  $\textup{MU}^{\textup{mot}}/\tau$-based Adams resolutions in the category \hbox{$\widehat{S^{0,0}}/\tau\text{-}\mathbf{Mod}_{\textup{cell}}$}.

\subsection{The category $\textup{MU}^\textup{mot}/\tau\text{-}\textbf{Mod}_\textup{cell}$ and the universal coefficient spectral
sequence} \label{subsec:mutaumoducss}

We begin with two adjunctions. The first adjunction
\begin{equation} \label{eq:adjSCellCtCell}
\widehat{S^{0,0}}\text{-}\mathbf{Mod}_{\textup{cell}} \overunder{\widehat{S^{0,0}}/\tau \smas -}{U}{\ladj} \widehat{S^{0,0}}/\tau\text{-}\mathbf{Mod}_{\textup{cell}},
\end{equation}
between cellular $\widehat{S^{0,0}}$-modules and cellular $\widehat{S^{0,0}}/\tau$-modules is induced by the $E_{\infty}$-ring map $$\widehat{S^{0,0}} \lto \widehat{S^{0,0}}/\tau.$$

Since $\textup{MU}^{\textup{mot}}/\tau$ is an $E_{\infty}$-algebra that is cellular over $\widehat{S^{0,0}}/\tau$, 
 the above adjunction \eqref{eq:adjSCellCtCell} extends to
\begin{equation} \label{eq:adjSCellCtCellMGLCell}
\widehat{S^{0,0}}\text{-}\mathbf{Mod}_{\textup{cell}} \overunder{\widehat{S^{0,0}}/\tau \smas -}{U}{\ladj} \widehat{S^{0,0}}/\tau\text{-}\mathbf{Mod}_{\textup{cell}} \overunder{\textup{MU}^{\textup{mot}} \smas -}{U}{\ladj} \textup{MU}^{\textup{mot}}/\tau\text{-}\mathbf{Mod}_{\textup{cell}}.
\end{equation}

\begin{defn}
Denote by
$$\textup{MU}^{\textup{mot}}_{*,*}/\tau\text{-}\mathbf{Mod}$$
the abelian category of graded left modules over $\textup{MU}^{\textup{mot}}_{*,*}/\tau$, and by
$$\textup{MU}^{\textup{mot}}_{*,*}/\tau\text{-}\mathbf{Mod}^0$$ 
as the full subcategory of $\textup{MU}^{\textup{mot}}_{*,*}/\tau\text{-}\mathbf{Mod}$ spanned by all graded modules $M_{*,*}$ that are concentrated in Chow-Novikov degree 0, i.e.,  $M_{s,w} = 0$ whenever $s \neq 2w$.
\end{defn}

We thus have a commutative diagram
\begin{equation*}
\begin{tikzpicture}
\matrix (m) [matrix of math nodes, row sep=3em, column sep=5em]
 {\textup{MU}^{\textup{mot}}/\tau\text{-}\mathbf{Mod}_{\textup{cell}} &  \textup{MU}^{\textup{mot}}_{*,*}/\tau\text{-}\mathbf{Mod}  \\
 \textup{MU}^{\textup{mot}}/\tau\text{-}\mathbf{Mod}_{\textup{cell}}^\heartsuit & \textup{MU}^{\textup{mot}}_{*,*}/\tau\text{-}\mathbf{Mod}^0. \\};
\path[thick, -stealth, font=\small]
(m-1-1) edge node[above] {$ \piss $}  (m-1-2);
\path[thick, -stealth, font=\small]
(m-2-1) edge node[above] {$  \piss $} (m-2-2);
\path[thick, -stealth, font=\small, right hook->]
(m-2-1) edge (m-1-1)
(m-2-2) edge (m-1-2);
\end{tikzpicture}
\end{equation*}

Since $\textup{MU}^{\textup{mot}}_{*,*}/\tau$ is concentrated in Chow-Novikov degree 0, forgetting the motivic weight we have an equivalence
\begin{equation*}
\textup{MU}^{\textup{mot}}_{*,*}/\tau\text{-}\mathbf{Mod}^0 \cong \textup{MU}_*\text{-}\mathbf{Mod}^\textup{ev}.
\end{equation*}

To show that the restriction of $\piss$ to the heart 
induces an equivalence
\begin{equation*}
\piss \colon \textup{MU}^{\textup{mot}}/\tau\text{-}\mathbf{Mod}_{\textup{cell}}^\heartsuit \stackrel{\cong}{\ltoo} \textup{MU}_*\text{-}\mathbf{Mod}^\textup{ev},
\end{equation*}
we recall the universal coefficient spectral sequence constructed by Dugger-Isaksen in \cite{DuggerIsaksen}. This spectral sequence is our main tool to compute homotopy classes of maps in the stable $\infty$-category $\textup{MU}^{\textup{mot}}/\tau\text{-}\mathbf{Mod}_{\textup{cell}}$.

\begin{thm}[Universal Coefficient spectral sequence] \label{thm:UCTMGL}
For any $\ X, Y \in  \textup{MU}^{\textup{mot}}/\tau\text{-}\mathbf{Mod}_{\textup{cell}}$, there is a conditionally convergent spectral sequence
\begin{equation*}
E_2^{s,t,w}=\Ext^{s,t,w}_{\textup{MU}^{\textup{mot}}_{*,*}/\tau}(\piss X, \piss Y) \Longrightarrow \left[ \Sigma^{t-s,w} X, Y \right]_{\textup{MU}^{\textup{mot}}/\tau}.
\end{equation*}
Moreover, if both $\pi_{*,*}X$ and $\pi_{*,*}Y$ are concentrated in bounded Chow-Novikov degrees, then the spectral sequence convergences strongly and collapses at a finite page.
\end{thm}

\begin{proof}
We refer to \cite[Propositions 7.7 and 7.10]{DuggerIsaksen} for the precise construction of the spectral sequence and the proof of conditional convergence. For the second statement of the theorem, we recall a few facts from the proof of  \cite[Propositions 7.7 and 7.10]{DuggerIsaksen}.

The $E_1$-page arises from a free resolution over $\textup{MU}^{\textup{mot}}_{*,*}/\tau$:
\begin{equation*} 
0 \ltoback \piss X \surjback \piss F_0 \ltoback \piss F_1 \ltoback \cdots,
\end{equation*}
and is given by
\begin{equation*}
E_1^{s,t,w} \coloneqq \Hom_{\textup{MU}^{\textup{mot}}_{*,*}/\tau}(\piss( \Sigma^{t,w} F_s), \piss Y).
\end{equation*}
The $E_2$-page is the cohomology of this chain complex, giving the claimed $\Ext$ groups. 

Suppose that $\piss X$ and $\piss Y$ are concentrated in Chow-Novikov degrees $[a, b]$ and $[c, d]$ respectively, where $a\leq b$ and $c\leq d$. Since $\textup{MU}^{\textup{mot}}_{*,*}/\tau$ is concentrated in Chow-Novikov degree 0, we can choose all $\piss(F_s)$ such that they are concentrated in Chow-Novikov degrees $[a, b]$. Therefore $\piss(\Sigma^{t, w} F_s)$ is concentrated in Chow-Novikov degrees 
$$[a + (t-2w), \ b + (t-2w)]$$ 
for all $s \geq 0$.

In order for the group $E_1^{s,t,w}$ to be nonzero, we must have 
$$c \leq b + (t-2w), \ \ d \geq a + (t-2w).$$

For a fixed weight $w$, this gives that
\begin{equation*}
t \in [c-b + 2w, d-a + 2w].
\end{equation*}
Since later pages $E_r^{s,t,w}$ are iterated subquotients of $E_1^{s,t,w}$, their $t$-degrees are all concentrated in $[c-b + 2w, d-a + 2w]$. 

Recall that the $d_r$-differential has the form
\begin{equation*}
E_r^{s,t,w} \stackrel{d_r}{\lto} E_r^{s+r, t+r-1,w}.
\end{equation*}
In particular, it changes the $t$-degrees by $r-1$. Since the $t$-degrees of all possible nonzero elements in the $E_1$-page satisfy $t \in [c-b + 2w, d-a + 2w]$, we must have $d_r = 0$ when
$$r-1 > (d-a+2w) - (c-b+2w) = (b-a)+(d-c)$$
for degree reasons. In other words, the spectral sequence collapses at the $E_{(b-a)+(d-c)+2}$ page. 

Therefore, under the condition that both $\pi_{*,*}X$ and $\pi_{*,*}Y$ are concentrated in bounded Chow-Novikov degrees, this spectral sequence convergences strongly and collapses at a finite page.
\end{proof}

Recall from Definition \ref{def:thicksub} that 
$$\textup{MU}^{\textup{mot}}/\tau\text{-}\mathbf{Mod}_{\textup{cell}}^{b, \geq 0}, $$ $$ \textup{MU}^{\textup{mot}}/\tau\text{-}\mathbf{Mod}_{\textup{cell}}^{b, \leq 0}, $$ $$ \textup{MU}^{\textup{mot}}/\tau\text{-}\mathbf{Mod}_{\textup{cell}}^{\heartsuit}$$ 
are the full subcategories of $\textup{MU}^{\textup{mot}}/\tau$-$\mathbf{Mod}_{\textup{cell}}^b$ that are spanned by objects whose homotopy groups are concentrated in nonnegative, nonpositive and zero Chow-Novikov degrees respectively.

\begin{cor}\label{C47}
Given $ X\in\textup{MU}^{\textup{mot}}/\tau$-$\mathbf{Mod}_{\textup{cell}}^{b, \geq 0}$ and $ Y\in\textup{MU}^{\textup{mot}}/\tau$-$\mathbf{Mod}_{\textup{cell}}^{b, \leq 0}$. The abelian group of homotopy classes of bidegree $(0, 0)$ can be computed algebraically by the isomorphism
$$ [X,Y]_{{\textup{MU}^{\textup{mot}}/\tau}} \longrightarrow \textup{Hom}_{\textup{MU}^{\textup{mot}}{*,*}/\tau}(\pi_{*,*}X,\pi_{*,*}Y)$$
that is induced by applying $\piss$.
\end{cor}

\begin{proof}
Consider the the $E_2$-page of the universal coefficient spectral sequence, the tri-degrees that converge to the bidegree $(0, 0)$ are of the form $(t, t, 0)$ for $t \geq 0$, i.e., the terms $E_2^{s, t, w} = E_2^{t, t, 0}$.

By the proof of Theorem \ref{thm:UCTMGL}, the $t$-degrees of all possible nonzero elements in the $E_1$-page and therefore $E_2$-page satisfy $t \leq d-a + 2w = d-a$. Since $\pi_{*,*}X$ and $\pi_{*,*}Y$ are concentrated in nonnegative and nonpositive bounded Chow-Novikov degrees, we have $d=a=0$. Therefore, we have $t \leq 0$.

Combining both facts, we have established that the only possible nonzero elements in the $E_2$-page that converge to the bidegree $(0, 0)$ are in 
$$E_2^{0,0,0} = \textup{Hom}_{\textup{MU}^{\textup{mot}}{*,*}/\tau}(\pi_{*,*}X,\pi_{*,*}Y).$$
To show that all elements in $E_2^{0,0,0}$ survive in the spectral sequence, first note that they are not targets of any nonzero differentials since they are in $s$-degree $0$. Second, all $d_r$-differentials for $r \geq 2$ increase the $t$-degree. Since the $t$-degrees of all nonzero elements are non-positive, the elements in $E_2^{0,0,0}$ do not support nonzero differentials. This completes the proof.
\end{proof}



\subsection{The equivalence on the heart}

Now we are ready to show that the functor $\piss$ induces an equivalence on the heart. The following is a special case of Corollary \ref{C47}.

\begin{cor} \label{cor:MGLCellheartff}
The functor
\begin{equation*} 
\piss \colon    \textup{MU}^{\textup{mot}}/\tau\text{-}\mathbf{Mod}_{\textup{cell}}^\heartsuit \ltoo \textup{MU}^\textup{mot}_{*,*}/\tau\text{-}\mathbf{Mod}^0
\end{equation*}
is fully faithful. Here the right hand side is understood as a discrete $\infty$-category.
\end{cor}

As a consequence, Corollary~\ref{cor:MGLCellheartff} shows that $\textup{MU}^{\textup{mot}}/\tau\text{-}\mathbf{Mod}_{\textup{cell}}^\heartsuit$ is also a discrete $\infty$-category.

\begin{proof}
For $n \geq 0$ and two objects $X,Y \in \textup{MU}^{\textup{mot}}/\tau\text{-}\mathbf{Mod}_{\textup{cell}}^\heartsuit$, 
by Corollary \ref{C47}, the edge homomorphism
\begin{equation*}
\left[ \Sigma^{n,0} X, Y \right]_{\textup{MU}^{\textup{mot}}/\tau} \stackrel{\piss}{\ltoo} \Hom_{\textup{MU}^{\textup{mot}}_*/\tau}(\piss \Sigma^{n,0} X, \piss Y)
\end{equation*}
is an isomorphism. When $n>0$, the bigraded module $\piss \Sigma^{n,0} X$ is concentrated in positive Chow-Novikov degree. So the right hand side of the above isomorphism is concentrated in the case $n=0$. This shows that $\piss$ is fully faithful on $\textup{MU}^{\textup{mot}}/\tau\text{-}\mathbf{Mod}_{\textup{cell}}^\heartsuit$.
\end{proof}

To show the equivalence on the heart, we only need to show the essential surjectivity of $\piss$. 

\begin{prop} \label{prop:equivMGLheart}
The functor
\begin{equation*}
\piss \colon \textup{MU}^{\textup{mot}}/\tau\text{-}\mathbf{Mod}_{\textup{cell}}^\heartsuit  \stackrel{}{\ltoo} \textup{MU}^\textup{mot}_{*,*}/\tau\text{-}\mathbf{Mod}^0
\end{equation*}
is an equivalence of $\infty$-categories.
\end{prop}

\begin{proof}
We need to show that any object $M \in \textup{MU}^\textup{mot}_{*,*}/\tau\text{-}\mathbf{Mod}^0$ can be realized as the homotopy groups of an object in $\textup{MU}^{\textup{mot}}/\tau\text{-}\mathbf{Mod}_{\textup{cell}}^\heartsuit $. 

Suppose that $M$ is a free $\textup{MU}^{\textup{mot}}_{*,*}/\tau$-module that is concentrated in Chow-Novikov degree 0.  
$$M \cong \bigoplus_{i \in I} \Sigma^{2k_i,k_i}\textup{MU}^{\textup{mot}}_{*,*}/\tau$$
Here $\Sigma^{2k_i, k_i}\textup{MU}^{\textup{mot}}_{*,*}/\tau$ is a free bigraded rank 1 module over $\textup{MU}^{\textup{mot}}_{*,*}/\tau$ with a generator in bidegree $(2k_i, k_i)$. We can realize $M$ as the homotopy groups of the wedge 
$$\bigvee_{i \in I} \Sigma^{2k_i,k_i}\textup{MU}^{\textup{mot}}/\tau$$ 
with the same index set, which is cellular.

For an arbitrary $M \in \textup{MU}^\textup{mot}_{*,*}/\tau\text{-}\mathbf{Mod}^0$, we can pick a free resolution
\begin{equation} \label{eqpf:resofMbyfree}
0 \ltoback M \ltoback F_0 \stackrel{f_1}{\ltoback} F_1 \stackrel{f_2}{\ltoback} F_2 \ltoback \cdots
\end{equation}
in $\textup{MU}^\textup{mot}_{*,*}/\tau\text{-}\mathbf{Mod}^0$. 

Each $F_i$ can be realized by  
$$Z_i \in \textup{MU}^{\textup{mot}}/\tau\text{-}\mathbf{Mod}_{\textup{cell}}^\heartsuit$$ and by Corollary \ref{cor:MGLCellheartff}, each map $f_i$ can be realized by a map $g_i \in \textup{MU}^{\textup{mot}}/\tau\text{-}\mathbf{Mod}_{\textup{cell}}^\heartsuit$ as in 
\begin{equation*}
Z_0 \stackrel{g_1}\ltoback Z_1 \stackrel{g_2}\ltoback Z_2 \ltoback \cdots.
\end{equation*}
Using the standard method of lifting an Adams resolution to an Adams tower, we claim that we can construct a tower
\begin{equation*} 
X_1 \lto X_2 \lto \cdots,
\end{equation*}
with the property that the homotopy groups of $X_i$ are given by the following groups
\begin{equation*}
\bigoplus_{l= -\infty}^{+\infty}\pi_{2l + k, l} (X_i) = \left\{
  \begin{array}{ll}
    M = \textup{Coker} f_1 & \text{ if } k = 0 \\
    \Sigma^{i, 0}\textup{Ker} f_i & \text{ if } k = i \\
    0 & \text{ otherwise}.
  \end{array}
\right.
\end{equation*}
and that the maps in this tower induce an isomorphism on the Chow-Novikov degree 0 part of these homotopy groups, which is $M$.


We prove this claim inductively.

In fact, we can choose $X_1$ to be the cofiber of
$$g_1: Z_1 \longrightarrow Z_0.$$
This gives us a long exact sequence on homotopy groups
\begin{displaymath}
	\xymatrix{
	\cdots \ar[r] & \pi_{*+1,*}X_1 \ar[r] & \piss Z_1 \ar[r]^{f_1} & \piss Z_0 \ar[r] & \piss X_1 \ar[r] & \cdots.
	}
\end{displaymath}

Since both $\piss Z_0$ and $\piss Z_1$ are concentrated in Chow-Novikov degree 0, we must have that $\piss X_1$ is concentrated in Chow-Novikov degree 0 and 1. We can compute directly from the long exact sequence that the Chow-Novikov degree 0 and 1 parts of $\piss X_1$ are isomorphic to $M = \textup{Coker} f_1$ and $\Sigma^{1, 0}\textup{Ker} f_1$ respectively.

Suppose now that we have constructed the tower up to $X_{i}$. We have a homomorphism
\begin{equation*}
\piss Z_{i+1} \cong F_{i+1} \surj \textup{Im}f_{i+1} \cong \textup{Ker}f_{i} \inj \piss(\Sigma^{-i,0} X_{i})
\end{equation*}
in $\textup{MU}^\textup{mot}_{*,*}/\tau\text{-}\mathbf{Mod}^0$. Here the first map is induced by $f_{i+1}$ and the second map corresponds to the Chow-Novikov degree $i$ part of $\piss X_i$.

By Corollary \ref{C47}, this homomorphism can be realized as a map 
$$Z_{i+1} \lto \Sigma^{-i,0} X_{i}.$$ 
Define $X_{i+1}$ as the $\Sigma^{i,0}$-suspension of its cofiber, so we have a cofiber sequence
\begin{equation*}
\Sigma^{i,0} Z_{i+1} \lto X_{i} \lto X_{i+1}.
\end{equation*}

By the associated long exact sequence in homotopy groups, we have that $\piss X_{i+1}$ is concentrated in Chow-Novikov degrees 0 and $i+1$. The Chow-Novikov degree 0 part is isomorphic to $M$, and the Chow-Novikov degree $i+1$ part is isomorphic to $\Sigma^{i+1, 0}\textup{Ker} f_{i+1}$ as required.

Having the tower 
\begin{equation*} 
X_1 \lto X_2 \lto \cdots,
\end{equation*}
we define $X$ as its colimit
\begin{equation*}
X \coloneqq \colim \left( X_1 \lto X_2 \lto \cdots \right).
\end{equation*}
The homotopy groups of $X$ are computed by the colimit
\begin{equation*}
\piss X \cong \colim \left( \piss X_1 \lto \piss X_2 \lto \cdots \right) = M
\end{equation*}
and are in particular concentrated in Chow-Novikov degree 0. 

Therefore we have proved that any module $M \in \textup{MU}^\textup{mot}_{*,*}/\tau\text{-}\mathbf{Mod}^0$ can be realized as a spectrum $X \in \textup{MU}^{\textup{mot}}/\tau\text{-}\mathbf{Mod}_{\textup{cell}}^\heartsuit$.
\end{proof}

\subsection{The $t$-structure, and the equivalence of categories} \label{subsubsec:tstructforMGLmod}

We prove in this subsection that the full subcategories previously defined satisfy the required axioms for the $t$-structure.

We first prove a general proposition regarding the existence of a $t$-structure.

\begin{prop} \label{sim t str}
Let $\mathcal{C}$ be a stable $\infty$-category, $\mathcal{C}^{\geq0}$ and $\mathcal{C}^{\leq0}$ be a pair of full subcategories of $\mathcal{C}$. 
Let $\mathcal{C}^{\geq n} = \Sigma^n \mathcal{C}^{\geq0}$, $\mathcal{C}^{\leq n} = \Sigma^{n}\mathcal{C}^{\leq0}$.
Suppose that
\begin{enumerate}
\item $\mathcal{C}^{\geq0}$ is closed under extensions and suspensions.
$\mathcal{C}^{\leq0}$ is closed under extensions and desuspensions.

\item 
$$\mathcal{C} = \bigcup_{n \in \mathbb{Z}} \mathcal{C}^{\geq n}.$$

\item
For any $X\in \mathcal{C}^{\geq0}$ and $Y\in \mathcal{C}^{\leq -1}$, we have
$$[X,Y]_{\mathcal{C}} = 0$$

\item 
For any $X\in \mathcal{C}^{\geq 0}$, there is an object $X_0\in \mathcal{C}^{\geq0}\cap \mathcal{C}^{\leq0}$ and a morphism
$$ X\longrightarrow X_0$$ 
such that its fiber lies in $\mathcal{C}^{\geq1}$.
\end{enumerate}
Then this pair of subcategories $\mathcal{C}^{\geq0}$ and $\mathcal{C}^{\leq0}$ defines a $t$-structure on $\mathcal{C}$.
\end{prop}

\begin{proof}
In view of Definition~\ref{def 2.1}, it remains to check that for any $X\in \mathcal{C}$, there exists a fiber sequence
$$X_{\geq0}\longrightarrow X \longrightarrow X_{\leq -1}$$
such that
$X_{\geq 0}\in \mathcal{C}^{\geq0}$ and $X_{\leq-1} \in \mathcal{C}^{\leq-1}$.

By the second assumption, there exists $n$ such that $X\in \mathcal{C}^{\geq n}$. 

If $n\geq 0$, we can take the fiber sequence to be
$$X\longrightarrow X \longrightarrow *$$
If $n<0$, using the fourth assumption there exits an object $X_n\in \mathcal{C}^{\geq n}\cap \mathcal{C}^{\leq n}$ and a fiber sequence
$$X_{\geq n+1}\longrightarrow X\longrightarrow X_n$$
such that $X_{\geq n+1} \in \mathcal{C}^{\geq n+1}$.

For $n < 0$, iterating this process, we get a finite sequence of morphsims
$$X_{\geq 0}\rightarrow X_{\geq -1}\rightarrow\dots \rightarrow X_{\geq n+1}\rightarrow X$$
such that $X_{\geq i} \in \mathcal{C}^{\geq i}$ and that the cofiber $X_{i-1}$ of the morphism
$$X_{\geq i}\longrightarrow X_{\geq i-1}$$
lies in $\mathcal{C}^{\geq i-1}\cap \mathcal{C}^{\leq i-1}$, where $i \in [n+1, 0]$.

Now we can take $X_{\leq -1}$ to be the cofiber of the morphism
$$X_{\geq 0}\longrightarrow X.$$
The object $X_{\leq -1}$ can be build up by finite extensions from the objects $X_{i-1}$ for\\ $i \in [n+1, 0]$. Therefore, by the first assumption, we have $X_{\leq -1} \in \mathcal{C}^{\leq -1}$.
\end{proof}

\begin{prop}
\label{prop:tstructonMGLCellb}
The pair of full subcategories 
$$\textup{MU}^{\textup{mot}}/\tau\text{-}\mathbf{Mod}_{\textup{cell}}^{b, \geq 0}, \ \textup{MU}^{\textup{mot}}/\tau\text{-}\mathbf{Mod}_{\textup{cell}}^{b, \leq 0}$$ 
defines a bounded $t$-structure on 
$$\textup{MU}^{\textup{mot}}/\tau\text{-}\mathbf{Mod}_{\textup{cell}}^b.$$
\end{prop}

\begin{proof}
We check the four conditions in Proposition~\ref{sim t str}. 

The first two conditions follow
directly from the definition of the Chow-Novikov degree.

For the third condition,
we need to show that
$$[X,Y]_{\textup{MU}^{\textup{mot}}/\tau} = 0$$
for any objects $$X \in \textup{MU}^{\textup{mot}}/\tau\text{-}\mathbf{Mod}_{\textup{cell}}^{b, \geq 0}, \ Y \in \Sigma^{-1,0} \textup{MU}^{\textup{mot}}/\tau\text{-}\mathbf{Mod}_{\textup{cell}}^{b, \leq 0}.$$

By Corollary~\ref{C47}, we have that
\begin{equation*}
[X,Y]_{\textup{MU}^{\textup{mot}}/\tau} \cong \Hom_{\textup{MU}^{\textup{mot}}_{*,*}/\tau}(\piss X, \piss Y)
\end{equation*}
Since $\piss X$ is concentrated in non-negative Chow-Novikov degrees and $\piss Y$ is concentrated in negative Chow-Novikov degrees, the right hand side is zero.


For the fourth condition, we need to show that for any 
$$X \in \textup{MU}^{\textup{mot}}/\tau\text{-}\mathbf{Mod}_{\textup{cell}}^{b, \geq 0},$$ 
there exists a fiber sequence
\begin{equation*}
X_{\geq 1} \lto X \lto X_0
\end{equation*}
such that 
$$X_{0} \in \textup{MU}^{\textup{mot}}/\tau\text{-}\mathbf{Mod}_{\textup{cell}}^{\heartsuit}, \ X_{\geq 1} \in \Sigma^{1, 0} (\textup{MU}^{\textup{mot}}/\tau\text{-}\mathbf{Mod}_{\textup{cell}}^{b, \geq 0}).$$




Consider the Chow-Novikov degree 0 part of $\piss (X)$, namely 
$$\pi_{*,*} (X)^{=0} := \bigoplus_k \pi_{2k, k}(X).$$
By Proposition \ref{prop:equivMGLheart} there is a spectrum 
$$X_0 \in \textup{MU}^{\textup{mot}}/\tau\text{-}\mathbf{Mod}_{\textup{cell}}^\heartsuit$$ realizing this bigraded $\textup{MU}^{\textup{mot}}_{*,*}/\tau$-module
$$\piss X_0 \cong \pi_{*,*} (X)^{=0}.$$
Consider the projection map
\begin{equation*}
\piss(X) \surj \pi_{*,*} (X)^{=0} \cong  \piss X_0.
\end{equation*}
By Corollary \ref{C47}, the projection map can be realized by a map 
$$ X \lto X_0.$$ 
Denote by $X_{\geq 1}$ its fiber. Then from the long exact sequence in homotopy groups, we have that 
$$X_{\geq 1} \in \Sigma^{1, 0} (\textup{MU}^{\textup{mot}}/\tau\text{-}\mathbf{Mod}_{\textup{cell}}^{b, \geq 0}).$$ 
\end{proof}

Having this $t$-structure on $\textup{MU}^{\textup{mot}}/\tau\text{-}\mathbf{Mod}_{\textup{cell}}^b$, the main result of this section follows from Proposition \ref{prop:projcat}.

\begin{thm} \label{thm:equivMGLCellwithalg}
There is a $t$-exact equivalence of stable $\infty$-categories
\begin{equation*}
\mathcal{D}^b(\textup{MU}_*\text{-}\mathbf{Mod}^\textup{ev}) \stackrel{\cong}{\ltoo}\textup{MU}^{\textup{mot}}/\tau\text{-}\mathbf{Mod}_{\textup{cell}}^b.
\end{equation*}
\end{thm}

\begin{proof}

By Proposition~\ref{prop:tstructonMGLCellb}, the $t$-structure is bounded.  By Proposition~\ref{prop:equivMGLheart}, and the equivalence 
\begin{equation*}
\textup{MU}^{\textup{mot}}_{*,*}/\tau\text{-}\mathbf{Mod}^0 \cong \textup{MU}_*\text{-}\mathbf{Mod}^\textup{ev},
\end{equation*}
the heart can be identified as modules over $\textup{MU}_*$. Therefore, it has enough projective objects. 

It remains to show that for any two objects $$X,Y \in \textup{MU}^{\textup{mot}}/\tau\text{-}\mathbf{Mod}_{\textup{cell}}^\heartsuit$$ 
with $\piss X$ projective over $\textup{MU}^{\textup{mot}}_{*,*}/\tau$, we have that
$$[\Sigma^{-i,0}X, Y]_{\textup{MU}^{\textup{mot}}/\tau}=0$$ 
for $i > 0$. 

We apply the Universal Coefficient spectral sequence in Theorem \ref{thm:UCTMGL}, 
\begin{equation*}
\Ext_{\textup{MU}^{\textup{mot}}_{*,*}/\tau}^{s,t,w}(\piss X, \piss Y) \Longrightarrow [ \Sigma^{t-s,w}X,Y]_{\textup{MU}^{\textup{mot}}/\tau}.
\end{equation*}

Since $\piss X$ is projective over $\textup{MU}^{\textup{mot}}_{*,*}/\tau$, the $E_2$-page of the spectral sequence is concentrated on the line $s=0$, and therefore collapses at the $E_2$-page.

Moreover, since both $\piss X$ and $\piss Y$ are concentrated in Chow-Novikov degree 0,
the $E_2$-page is also concentrated in Chow-Novikov degree 0, namely $t - 2w = 0$ in this case. 

We are interested in the case $t-s = -i<0$ and $w=0$. By the above analysis, the corresponding tri-degrees in the $E_2$-page are all 0 in our case. Therefore, we must have that
$$[\Sigma^{-i,0}X, Y]_{\textup{MU}^{\textup{mot}}/\tau}=0.$$
This completes the proof.
\end{proof}


\section{An algebraic model for harmonic $\widehat{S^{0,0}}/\tau$-modules} \label{subsec:Ctmod}

After the warmup in Section 3, we use Proposition \ref{prop:injcat} to prove Theorem \ref{thm:ctaumod}. Namely, there exists a $t$-exact equivalence of stable $\infty$-categories
		$$\widehat{S^{0,0}}/\tau\text{-}\mathbf{Mod}_{\textup{harm}}^b \longrightarrow \mathcal{D}^b(\textup{MU}_*\textup{MU}\text{-}\mathbf{Comod}^\textup{ev}),$$
		whose restriction on the heart is given by
	$$\textup{MU}^{\textup{mot}}_{*,*}: \widehat{S^{0,0}}/\tau\text{-}\mathbf{Mod}_{\textup{harm}}^{\heartsuit} \longrightarrow \textup{MU}_*\textup{MU}\text{-}\mathbf{Comod}^\textup{ev}.$$
The structure of this section is similar to that of Section 3. 
	
In Section \ref{subsec:ctauharmodanss}, we discuss the category of harmonic $\widehat{S^{0,0}}/\tau$-modules. We will also recall certain facts on the category of $\textup{MU}_*\textup{MU}$-comodules, such as Landweber's Filtration Theorem. Instead of using the universal coefficient spectral sequence in the category $\textup{MU}^{\textup{mot}}/\tau\text{-}\mathbf{Mod}_{\textup{cell}}$, we will use the absolute Adams-Novikov spectral sequence in the category of harmonic $\widehat{S^{0,0}}/\tau$-modules. This spectral sequence is constructed in Section 5. Using this spectral sequence, we prove the equivalence on the heart as Proposition \ref{prop:CtCellequivheart} in Section 4.2. Then again using this spectral sequence, we show in Section 4.3 that the full subcategories 
$$\widehat{S^{0,0}}/\tau\text{-}\mathbf{Mod}_{\textup{harm}}^{b,\geq0}, \  \widehat{S^{0,0}}/\tau\text{-}\mathbf{Mod}_{\textup{harm}}^{b,\leq0}$$
define a $t$-structure and conclude the equivalence of stable $\infty$-categories as Proposition \ref{prop:tstrCtCellb} and Theorem \ref{thm:equivCtCellcompalg}.


\subsection{The categories $\widehat{S^{0,0}}/\tau\text{-}\mathbf{Mod}_{\textup{harm}}$ and $\textup{MU}_*\textup{MU}\text{-}\mathbf{Comod}^\textup{ev}$} \label{subsec:ctauharmodanss}


We first recall from Definition~\ref{def:pisscomplete} that a $\widehat{S^{0,0}}/\tau$-module spectrum $Y$ is \emph{harmonic} if it is $\widehat{S^{0,0}}/\tau$-cellular and the natural map 
$$Y \lto Y^{\smas}_{\textup{MU}^{\textup{mot}}}$$ 
is an isomorphism on $\piss$. As pointed out after Definition~\ref{def:pisscomplete}, the two completions $X^{\wedge}_{\text{MGL}}$ in the category $\C\text{-}\mathbf{mot}\text{-}\mathbf{Spectra}$ and $X^{\wedge}_{\textup{MU}^{\textup{mot}}}$ in the category $\widehat{S^{0,0}}$-$\mathbf{Mod}$ are equivalent for any $X$ in $\widehat{S^{0,0}}$-$\mathbf{Mod}_{\textup{cell}}$. It is clear that in the category $\widehat{S^{0,0}}/\tau\text{-}\mathbf{Mod}_{\textup{cell}}$, being harmonic is closed under taking suspensions, finite products and fibers. The category of harmonic $\widehat{S^{0,0}}/\tau$-module spectra is denoted by $\widehat{S^{0,0}}/\tau\text{-}\mathbf{Mod}_{\textup{harm}}$.


We have the following examples and non-examples of harmonic $\widehat{S^{0,0}}/\tau$-module spectra.

\begin{example}\leavevmode
	\begin{enumerate}
		\item Any finite cellular object in $\widehat{S^{0,0}}/\tau$-$\mathbf{Mod}$ is harmonic. In fact, by discussion in Section~7 of Dugger-Isaksen \cite{DuggerIsaksenMASS}, the H$\mathbb{F}_p^\textup{mot}$-completed sphere $\widehat{S^{0,0}}$ is MGL-complete and is therefore ${\textup{MU}^{\textup{mot}}}$-complete. Then the claim follows from an induction argument.
				\item Any finite cellular object in $\textup{MU}^{\textup{mot}}/\tau$-$\mathbf{Mod}$ is harmonic.
		\item The $\eta$-inverted cofiber of $\tau$ is $\widehat{S^{0,0}}/\tau$-cellular but \emph{not} harmonic. \\ Here $\eta$ is the Hopf map in $\pi_{1,1}\widehat{S^{0,0}}$. Post-composing with the unit map $\widehat{S^{0,0}} \rightarrow \widehat{S^{0,0}}/\tau$, we also denote its Hurewicz image in $\pi_{1,1}\widehat{S^{0,0}}/\tau$ by $\eta$. It is non-nilpotent in the ring $\pi_{*,*}\widehat{S^{0,0}}/\tau$. One way to see this fact is to identify $\pi_{*,*}\widehat{S^{0,0}}/\tau$ as $\Ext^{*,*}_{\textup{BP}_*\textup{BP}}(\textup{BP}_*, \textup{BP}_*)$ by Gheorghe-Isaksen and to use the fact from Miller-Ravenel-Wilson \cite{MRW} that the element that detects $\eta$ in $\Ext^{*,*}_{\textup{BP}_*\textup{BP}}(\textup{BP}_*, \textup{BP}_*)$ is non-nilpotent. The $\eta$-inverted cofiber of $\tau$
\begin{equation*}
\eta^{-1} \widehat{S^{0,0}}/\tau \coloneqq \textup{colim} \Big( \widehat{S^{0,0}}/\tau \rightarrow \Sigma^{-1,-1}\widehat{S^{0,0}}/\tau \rightarrow \Sigma^{-2,-2}\widehat{S^{0,0}}/\tau \rightarrow \cdots \Big)
\end{equation*}
is a cellular object in $\widehat{S^{0,0}}/\tau$-$\mathbf{Mod}$. Since $\eta$ maps to zero in $\pi_{*,*}\textup{MU}^{\textup{mot}}$, we have that $$\textup{MU}^{\textup{mot}}_{*,*}(\eta^{-1} \widehat{S^{0,0}}/\tau) = 0.$$
Therefore, the completion $({\eta^{-1} \widehat{S^{0,0}}/\tau})^{\wedge}_{\textup{MU}^{\textup{mot}}}$ is contractible. This shows that the spectrum $\eta^{-1} \widehat{S^{0,0}}/\tau$ is not harmonic.
	\end{enumerate}
\end{example}

The following Lemma~\ref{lemma:pisscompclosedunderubfilcolim} will be used in the proof of Proposition~\ref{prop:CtCellequivheart}. We postpone the proof of Lemma~\ref{lemma:pisscompclosedunderubfilcolim} until the end of Section 5.

\begin{lemma} \label{lemma:pisscompclosedunderubfilcolim} 
Suppose that $\{Y_{\alpha}\}$ is a filtered system in $\widehat{S^{0,0}}/\tau\text{-}\mathbf{Mod}_{\textup{cell}}^\heartsuit$ such that each $Y_{\alpha}$ is harmonic. Then the colimit of $\{Y_{\alpha}\}$ in $\widehat{S^{0,0}}/\tau\text{-}\mathbf{Mod}_{\textup{cell}}^\heartsuit$ is also harmonic.
\end{lemma}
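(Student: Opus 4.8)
The plan is to translate the claim into a statement about the motivic Adams--Novikov spectral sequence of Section~5 and to exploit that this spectral sequence---in contrast to the $\textup{MU}^{\textup{mot}}$-nilpotent completion functor $(-)^{\wedge}_{\textup{MU}^{\textup{mot}}}$ itself---commutes with filtered colimits on the subcategory at hand. Put $Y := \colim_\alpha Y_\alpha$. Since cellular objects are closed under colimits, since base change along $\widehat{S^{0,0}}/\tau \to \textup{MU}^{\textup{mot}}/\tau$ is a left adjoint, and since $\pi_{*,*}$ commutes with filtered colimits in $\widehat{S^{0,0}}/\tau\text{-}\mathbf{Mod}$, we get $\textup{MU}^{\textup{mot}}_{*,*}Y \cong \colim_\alpha \textup{MU}^{\textup{mot}}_{*,*}Y_\alpha$, still concentrated in Chow--Novikov degree~$0$. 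So $Y$ is cellular and again lies in the heart, the colimit in the heart agreeing with that in $\widehat{S^{0,0}}/\tau\text{-}\mathbf{Mod}$. It remains to prove that the completion unit $Y \to Y^{\wedge}_{\textup{MU}^{\textup{mot}}}$ is an isomorphism on $\pi_{*,*}$.

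First I would record two facts about filtered colimits together with a thinness observation. \emph{(i)} $\textup{MU}^{\textup{mot}}_{*,*}(-)$ commutes with filtered colimits, as just used. \emph{(ii)} Since $\textup{MU}^{\textup{mot}}_{*,*}\textup{MU}^{\textup{mot}}/\tau \cong \textup{MU}_*\textup{MU}$ is flat (indeed free) over $\textup{MU}^{\textup{mot}}_{*,*}/\tau \cong \textup{MU}_*$, the cobar complex is built from iterated tensor products over $\textup{MU}^{\textup{mot}}_{*,*}/\tau$, so $\Ext^{s,t,w}_{\textup{MU}^{\textup{mot}}_{*,*}\textup{MU}^{\textup{mot}}/\tau}(\textup{MU}^{\textup{mot}}_{*,*}/\tau, -)$ commutes with filtered colimits of comodules. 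Next, for any cellular $\widehat{S^{0,0}}/\tau$-module $Z$ the motivic Adams--Novikov spectral sequence of Section~5 (with first variable $\widehat{S^{0,0}}/\tau$) takes the form
\[
E_2^{s,t,w}=\Ext^{s,t,w}_{\textup{MU}^{\textup{mot}}_{*,*}\textup{MU}^{\textup{mot}}/\tau}\bigl(\textup{MU}^{\textup{mot}}_{*,*}/\tau,\ \textup{MU}^{\textup{mot}}_{*,*}Z\bigr)\ \Longrightarrow\ \pi_{t-s,w}\bigl(Z^{\wedge}_{\textup{MU}^{\textup{mot}}}\bigr),
\]
converging at least conditionally. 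When $\textup{MU}^{\textup{mot}}_{*,*}Z$ is concentrated in Chow--Novikov degree~$0$---as it is for $Z = Y$ and for every $Z = Y_\alpha$---the degree bookkeeping of Corollary~\ref{C47} and Theorem~\ref{thm:equivMGLCellwithalg} applies verbatim: $\textup{MU}^{\textup{mot}}_{*,*}\textup{MU}^{\textup{mot}}/\tau$ lies in Chow--Novikov degree~$0$, hence so does the cobar complex, hence $E_2^{s,t,w} = 0$ unless $t = 2w$; as $d_r$ changes $t$ by $r-1$, the spectral sequence collapses at $E_2$; and since $E_\infty^{s,2w,w}$ sits in Chow--Novikov degree $-s$ of the abutment, each bidegree of $\pi_{*,*}(Z^{\wedge}_{\textup{MU}^{\textup{mot}}})$ receives a contribution from at most one filtration, so the spectral sequence converges strongly. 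This last observation is purely formal and uses neither harmonicity nor boundedness, so it applies to $Y$ directly.

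Now I would assemble. Each $Y_\alpha$ is harmonic, so $\pi_{*,*}Y_\alpha^{\wedge}_{\textup{MU}^{\textup{mot}}} \cong \pi_{*,*}Y_\alpha$; combined with the collapse and strong convergence above, the spectral sequence for $Z = Y_\alpha$ gives natural isomorphisms $\pi_{2w-s,w}Y_\alpha \cong \Ext^{s,2w,w}_{\textup{MU}^{\textup{mot}}_{*,*}\textup{MU}^{\textup{mot}}/\tau}(\textup{MU}^{\textup{mot}}_{*,*}/\tau, \textup{MU}^{\textup{mot}}_{*,*}Y_\alpha)$. Taking the colimit over $\alpha$ and invoking \emph{(i)} and \emph{(ii)},
\[
\pi_{2w-s,w}Y \,\cong\, \colim_\alpha \pi_{2w-s,w}Y_\alpha \,\cong\, \colim_\alpha \Ext^{s,2w,w}_{\textup{MU}^{\textup{mot}}_{*,*}\textup{MU}^{\textup{mot}}/\tau}\bigl(\textup{MU}^{\textup{mot}}_{*,*}/\tau, \textup{MU}^{\textup{mot}}_{*,*}Y_\alpha\bigr) \,\cong\, \Ext^{s,2w,w}_{\textup{MU}^{\textup{mot}}_{*,*}\textup{MU}^{\textup{mot}}/\tau}\bigl(\textup{MU}^{\textup{mot}}_{*,*}/\tau, \textup{MU}^{\textup{mot}}_{*,*}Y\bigr).
\]
But the last group is exactly the $E_2$-page of the (collapsing, strongly convergent) spectral sequence for $Z = Y$, so $\pi_{2w-s,w}Y^{\wedge}_{\textup{MU}^{\textup{mot}}} \cong \Ext^{s,2w,w}_{\textup{MU}^{\textup{mot}}_{*,*}\textup{MU}^{\textup{mot}}/\tau}(\textup{MU}^{\textup{mot}}_{*,*}/\tau, \textup{MU}^{\textup{mot}}_{*,*}Y) \cong \pi_{2w-s,w}Y$. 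Finally, naturality of the spectral sequence and of the completion unit identifies this isomorphism with the map induced by $Y \to Y^{\wedge}_{\textup{MU}^{\textup{mot}}}$: that unit factors as $Y = \colim_\alpha Y_\alpha \to \colim_\alpha Y_\alpha^{\wedge}_{\textup{MU}^{\textup{mot}}} \to Y^{\wedge}_{\textup{MU}^{\textup{mot}}}$, the first arrow being a $\pi_{*,*}$-isomorphism because each $Y_\alpha$ is harmonic and the second being a $\pi_{*,*}$-isomorphism by the comparison of $E_2$-pages just made. Hence $Y$ is harmonic.

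The essential obstacle is that $(-)^{\wedge}_{\textup{MU}^{\textup{mot}}}$ does not commute with filtered colimits, so $Y^{\wedge}_{\textup{MU}^{\textup{mot}}} \simeq \colim_\alpha (Y_\alpha)^{\wedge}_{\textup{MU}^{\textup{mot}}}$ fails in general and the lemma is not a formality. What makes the argument go through is that on objects of the heart the Adams--Novikov $E_2$-page is thin, supported on the single line $t = 2w$, which simultaneously forces collapse at $E_2$, yields strong convergence with no harmonicity input, and reduces the abutment to an $\Ext$-group over a Hopf algebroid, which does commute with filtered colimits. The secondary point to be careful about is that the Section~5 construction of the Adams--Novikov spectral sequence, although stated there for bounded harmonic modules, requires of the object $Y$ only cellularity and boundedness of $\textup{MU}^{\textup{mot}}_{*,*}Y$ (here, concentration in Chow--Novikov degree~$0$) and outputs $\pi_{*,*}(Y^{\wedge}_{\textup{MU}^{\textup{mot}}})$ regardless of whether $Y$ is harmonic; it is precisely the formal strong-convergence observation that licenses applying it to $Y$ before harmonicity of $Y$ is known.
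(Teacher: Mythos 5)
Your proposal is correct and takes essentially the same route as the paper's own proof: apply the absolute Adams--Novikov spectral sequence (Theorem~\ref{thm:ANss}) to $Y = \colim Y_\alpha$, observe that for objects whose $\textup{MU}^{\textup{mot}}$-homology is concentrated in Chow--Novikov degree~$0$ the $E_2$-page is supported on the line $t=2w$ so the spectral sequence degenerates, use the cobar complex to see that the $E_2$-page commutes with filtered colimits, and chain the isomorphisms back to $\pi_{*,*}Y^{\wedge}_{\textup{MU}^{\textup{mot}}}$. Your extra remarks---that the spectral sequence of Theorem~\ref{thm:ANss} already targets $\pi_{*,*}(Y^{\wedge}_{\textup{MU}^{\textup{mot}}})$ without requiring $Y$ harmonic, and that the unit map factors through $\colim_\alpha (Y_\alpha)^{\wedge}_{\textup{MU}^{\textup{mot}}}$---make explicit two points the paper states more tersely, but the argument is the same.
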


Recall that for a $\widehat{S^{0,0}}/\tau$-module $X$, its
$\textup{MU}^{\textup{mot}}$-homology can also be described as
\begin{equation*}
\textup{MU}_{*,*}^{\textup{mot}} X = \piss( \textup{MU}^{\textup{mot}} \smas_{\widehat{S^{0,0}}} X) \cong \piss(\textup{MU}^{\textup{mot}}/\tau  \smas_{\widehat{S^{0,0}}/\tau} X).
\end{equation*}

Following computations of $\textup{MU}^{\textup{mot}}_{*,*}\textup{MU}^{\textup{mot}}$ from Hu-Kriz-Ormsby \cite{HKOrem} and Dugger-Isaksen \cite{DuggerIsaksenMASS}, we have the $\textup{MU}^{\textup{mot}}$-homology of $\textup{MU}^{\textup{mot}}/\tau$
\begin{equation*}
\pi_{*,*}(\textup{MU}^{\textup{mot}}/\tau\wedge_{\widehat{S^{0,0}}/\tau}\textup{MU}^{\textup{mot}}/\tau) \cong \textup{MU}^{\textup{mot}}_{*,*}/\tau [b_1, b_2, \ldots] \cong  {\textup{MU}^{\textup{mot}}_{*,*}\textup{MU}^{\textup{mot}}}/\tau
\end{equation*}
where $|b_i| = (2i,i)$, and is in Chow-Novikov degree 0. Since $\tau$ can be realized as a map $\widehat{S^{0,-1}} \rightarrow \widehat{S^{0,0}}$, it is primitive in $\textup{MU}^\textup{mot}$. Therefore, ${\textup{MU}^{\textup{mot}}_{*,*}\textup{MU}^{\textup{mot}}}/\tau$ is a Hopf algebroid.

\begin{defn}
Denote by 
$$\textup{MU}^{\textup{mot}}_{*,*}\textup{MU}^{\textup{mot}}/\tau\text{-}\mathbf{Comod}$$ 
the abelian category of graded left comodules over the Hopf algebroid $\textup{MU}^{\textup{mot}}_{*,*}\textup{MU}^{\textup{mot}}/\tau$, and by 
$$\textup{MU}^{\textup{mot}}_{*,*}\textup{MU}^{\textup{mot}}/\tau\text{-}\mathbf{Comod}^0$$ 
its full subcategory spanned by all graded comodules
whose underlying $\textup{MU}^{\textup{mot}}_{*,*}/\tau$-modules are concentrated in Chow-Novikov degree 0.
\end{defn}

We therefore have a commutative diagram
\begin{equation*}
\begin{tikzpicture}
\matrix (m) [matrix of math nodes, row sep=3em, column sep=5em]
{ \widehat{S^{0,0}}/\tau\text{-}\mathbf{Mod}_{\textup{harm}} & \textup{MU}^{\textup{mot}}_{*,*}\textup{MU}^{\textup{mot}}/\tau\text{-}\mathbf{Comod} \\
  \widehat{S^{0,0}}/\tau\text{-}\mathbf{Mod}_{\textup{harm}}^\heartsuit & \textup{MU}^{\textup{mot}}_{*,*}\textup{MU}^{\textup{mot}}/\tau\text{-}\mathbf{Comod}^0. \\};
\path[thick, -stealth, font=\small]
(m-1-1) edge node[above] {$ \textup{MU}^{\textup{mot}}_{*,*} $}  (m-1-2)
(m-2-1) edge node[above] {$  \textup{MU}^{\textup{mot}}_{*,*}$} (m-2-2);
\path[thick, -stealth, font=\small, right hook->/]
(m-2-1) edge (m-1-1)
(m-2-2) edge (m-1-2);
\end{tikzpicture}
\end{equation*}

Forgetting the motivic weight, we have the equivalence 
\begin{equation*}
\textup{MU}^{\textup{mot}}_{*,*}\textup{MU}^{\textup{mot}}/\tau\text{-}\mathbf{Comod}^0 \cong \textup{MU}_*\textup{MU}\text{-}\mathbf{Comod}^\textup{ev}.
\end{equation*}

Recall that we have the adjunction between modules and comodules
\begin{equation} \label{eq:adjModComod}
U \colon \textup{MU}^{\textup{mot}}_{*,*}\textup{MU}^{\textup{mot}}/\tau\text{-}\mathbf{Comod} \ladj \textup{MU}^{\textup{mot}}_{*,*}/\tau\text{-}\mathbf{Mod} \colon  \textup{MU}^{\textup{mot}}_{*,*}\textup{MU}^{\textup{mot}}/\tau\otimes_{\textup{MU}^{\textup{mot}}_{*,*}/\tau}-.
\end{equation}
The forgetful functor is a \emph{left} adjoint, while the tensor-up functor is a \emph{right} adjoint.
We refer to \cite[Section 1.1]{Hovey} for more details.

Using the ring map $\widehat{S^{0,0}}/\tau \lto \textup{MU}^{\textup{mot}}/\tau$, we can form the commutative diagram
\begin{equation*}
\begin{tikzpicture}
\matrix (m) [matrix of math nodes, row sep=3em, column sep=5em]
 {\textup{MU}^{\textup{mot}}/\tau\text{-}\mathbf{Mod}_{\textup{cell}} &  \textup{MU}^{\textup{mot}}_{*,*}/\tau\text{-}\mathbf{Mod} \\
  \widehat{S^{0,0}}/\tau\text{-}\mathbf{Mod}_{\textup{cell}}&  \textup{MU}^{\textup{mot}}_{*,*}\textup{MU}^{\textup{mot}}/\tau\text{-}\mathbf{Comod}. \\};
\path[thick, -stealth, font=\small]
(m-2-1) edge node[left] {$\textup{MU}^{\textup{mot}}/\tau \wedge_{\widehat{S^{0,0}}/\tau} - $} (m-1-1)
(m-2-2) edge node[right] { $\textup{U}$ }(m-1-2)
(m-1-1) edge node[above] {$ \piss $}  (m-1-2)
(m-2-1) edge node[above] {$   \textup{MU}^{\textup{mot}}_{*,*} $} (m-2-2);
\end{tikzpicture}
\end{equation*}
For the category of comodules over $\textup{MU}_{*}\textup{MU}$, we recall the Landweber's Filtration Theorem. Recall from \cite{Landweber1, Landweber2} that there are elements $v_n \in \textup{MU}_{\ast}$ with $v_0=p$, giving the invariant prime ideals $I_n = (v_0, \ldots, v_n) \unlhd \textup{MU}_*$. Moreover, these elements satisfy the formula
\begin{equation*}
\eta_R(v_n)\equiv v_n \mod I_{n-1},
\end{equation*}
and so $\textup{MU}_*/I_n$ is canonically a comodule over $\textup{MU}_*\textup{MU}$. This gives a short exact sequence of comodules
\begin{equation*}
0 \lto \textup{MU}_* /I_{n} \stackrel{\cdot v_n}{\lto} \textup{MU}_* /I_{n} \lto \textup{MU}_* /I_{n+1} \lto 0,
\end{equation*}
for every $n \geq 0$. Landweber's Filtration Theorem (\cite{Landweber1}, \cite{Landweber2}) states that any comodule $M$ over $\textup{MU}_{*}\textup{MU}$ whose underlying $\textup{MU}_*$-module is finitely presented, can be reconstructed by finitely many extensions of suspensions of $\textup{MU}_*/I_{n}$'s.\\

\begin{thm}[Landweber's Filtration Theorem] 
Suppose that $S$ is a class of comodules over $\textup{MU}_{*}\textup{MU}$ such that 
\begin{enumerate}
	\item it contains  $\textup{MU}_*$ and
 $\textup{MU}_*/I_n$'s for all $n\geq 0$,
 \item and it is closed under suspensions and extensions.
\end{enumerate}   
Then $S$ contains all comodules over $\textup{MU}_{*}\textup{MU}$ whose underlying $\textup{MU}_*$-modules are finitely presented.
\end{thm}

There are two more facts that we will use on the category of comodules over $\textup{MU}_{*}\textup{MU}$. The first one is the following lemma. For a proof, see Miller-Ravenel \cite[Lemma 2.11]{MillerRavenel} and Hovey \cite{Hovey} for example.

\begin{lemma} \label{lem:mumucolim}
	Any comodule over $\textup{MU}_*\textup{MU}$ is a filtered colimit of finitely presented comodules.
\end{lemma}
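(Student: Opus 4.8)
The plan is to combine two standard facts about comodules over the flat Hopf algebroid $(A,\Gamma) := (\textup{MU}_*,\textup{MU}_*\textup{MU})$: that $A$ is a coherent ring (being a polynomial ring over $\mathbb{Z}_p$), and that $(\textup{MU}_*,\textup{MU}_*\textup{MU})$ is an Adams Hopf algebroid, i.e.\ $\Gamma = \colim_i \Gamma_i$ is a filtered colimit of sub-comodules $\Gamma_i$ (for the left $\Gamma$-comodule structure on $\Gamma$ given by $\Delta$) that are finitely generated and free over $A$; the latter comes from writing the second smash factor of $\textup{MU}\wedge\textup{MU}$ as the filtered colimit of its finite skeleta with free $\textup{MU}$-homology. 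Throughout, a ``finitely presented comodule'' means a comodule whose underlying $A$-module is finitely presented, and I keep in mind that the lemma genuinely asserts a filtered colimit rather than a filtered union of subobjects: for instance $\textup{MU}_*/(p,v_1,v_2,\dots)$ is a cyclic comodule that is not finitely presented over $A$ yet equals $\colim_n \textup{MU}_*/I_n$.

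The first step is to observe that cofree comodules are filtered colimits of finitely presented comodules. For any $A$-module $V$, write $V = \colim_\gamma V_\gamma$ as a filtered colimit of finitely presented $A$-modules; since $-\otimes_A V$ and the forgetful functor from comodules to $A$-modules preserve colimits, the cofree comodule $\Gamma\otimes_A V$ is the filtered colimit of the comodules $\Gamma_i\otimes_A V_\gamma$, where the coaction on $\Gamma_i\otimes_A V_\gamma$ is induced by $\Delta|_{\Gamma_i}$ on the first factor (using $\Delta(\Gamma_i)\subseteq\Gamma\otimes_A\Gamma_i$). Each $\Gamma_i\otimes_A V_\gamma$ is finitely presented over $A$ because $\Gamma_i$ is finitely generated free over $A$ and $V_\gamma$ is finitely presented. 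Applying this to the first two terms $\Gamma\otimes_A M$ and $\Gamma\otimes_A\overline{\Gamma}\otimes_A M$ of the normalized cobar complex of a comodule $M$, whose exactness presents $M$ as the kernel of the first differential $\Gamma\otimes_A M \to \Gamma\otimes_A\overline{\Gamma}\otimes_A M$, I reduce the lemma to showing that the class of filtered colimits of finitely presented comodules is closed under kernels.

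This closure property is the step I expect to be the main obstacle. I would establish three auxiliary facts: (i) since $A$ is coherent, the comodules finitely presented over $A$ form an abelian subcategory of all comodules, stable under kernels and cokernels (the underlying $A$-module of such a kernel or cokernel is again finitely presented, and it inherits a comodule structure); (ii) a comodule $P$ finitely presented over $A$ is a finitely presented object of the (Grothendieck) category of comodules, since $\Hom_A(P,-)$ and $\Hom_A(P,\Gamma\otimes_A-)$ commute with filtered colimits, and the comodule maps out of $P$ form an equalizer inside $\Hom_A(P,-)$; (iii) filtered colimits are exact in the category of comodules. Granting these, one uses (ii) to express the cobar differential as a filtered colimit of maps between finitely presented comodules, and then commutes the kernel past the filtered colimit via (i) and (iii); this realizes $M$ as a filtered colimit of finitely presented comodules.

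As a sanity check, and closer in spirit to the cited argument of Miller--Ravenel, one can instead first prove the ``local finiteness'' statement that every comodule $M$ is the filtered union of its subcomodules finitely generated over $A$: fixing a left $A$-basis $\{e_j\}$ of $\Gamma$ and writing $\psi(m) = \sum_{j\in F} e_j\otimes m_j$ with $F$ finite, the counit axiom gives $m\in\sum_{j} A\,m_j$ and coassociativity forces $\psi(m_l)\in\Gamma\otimes_A\big(\sum_{j\in F} A\,m_j\big)$ for every $l$, so that $N := \sum_{j\in F} A\,m_j$ is a finitely generated subcomodule containing $m$. One would then still need to upgrade ``finitely generated over $A$'' to ``finitely presented over $A$'' for such $N$, which is exactly the coherence argument above; for that reason I would present the cobar route as the primary argument, since it handles the upgrade uniformly.
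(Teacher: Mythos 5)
The paper does not give its own proof of this lemma; it simply refers to Miller--Ravenel \cite[Lemma~2.11]{MillerRavenel} and to Hovey \cite{Hovey}. Your cobar-complex argument is correct and is, in spirit, the Hovey-style argument for a general Adams Hopf algebroid over a coherent base: present $M$ as the kernel of the first normalized cobar differential between cofree comodules, decompose each cofree comodule as a filtered colimit $\colim_{i,\gamma}\Gamma_i\otimes_{\textup{MU}_*}M_\gamma$ using both the Adams filtration $\Gamma=\colim\Gamma_i$ and a presentation $M=\colim_\gamma M_\gamma$ of the underlying module, observe that the finitely presented comodules are finitely presented objects of the comodule category, and then commute kernels past the filtered colimit using coherence of $\textup{MU}_*$ and exactness of filtered colimits. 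All of this checks out, including the point that the $\Gamma_i$ can be taken to satisfy $\Delta(\Gamma_i)\subseteq\Gamma\otimes_{\textup{MU}_*}\Gamma_i$ via the skeletal filtration of the second smash factor of $\textup{MU}\wedge\textup{MU}$.

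This route differs from Miller--Ravenel's. Their Lemma~2.11 produces a filtered \emph{union} of finitely presented \emph{sub}comodules, and the crucial upgrade from ``finitely generated over $\textup{BP}_*$'' to ``finitely presented over $\textup{BP}_*$'' is not a consequence of coherence of the base ring alone, as your sanity-check paragraph suggests: a finitely generated submodule of an arbitrary module over a coherent ring need not be finitely presented. What Miller--Ravenel actually use is Landweber's filtration theorem (precisely the result recalled earlier in this section of the paper), which shows that a $\textup{BP}_*\textup{BP}$-comodule finitely generated over $\textup{BP}_*$ admits a finite filtration by suspensions of the $\textup{BP}_*/I_n$, and is hence finitely presented. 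That input is specific to $\textup{MU}_*\textup{MU}$ and $\textup{BP}_*\textup{BP}$. Your primary (cobar) argument sidesteps this and works for any Adams Hopf algebroid over a coherent ring, which is a genuine gain in generality; what it gives up is the stronger conclusion that the filtered system can be taken to consist of subcomodules. Since you already demoted the subcomodule route to a sanity check and flag the upgrade as the delicate point, the only correction needed is the attribution: the upgrade is Landweber's theorem, not coherence.
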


The second one is a consequence of the fact that the forgetful functor from\\ $\textup{MU}_*\textup{MU}\text{-}\mathbf{Comod}^\textup{ev}$ to $\textup{MU}_*\text{-}\mathbf{Mod}^\textup{ev}$ has a right adjoint. For a precise argument, see the proof of Lemma~\ref{lemma:injresalg} for its motivic analogue. 

\begin{lemma} \label{lem:injmumu}
	The category $\textup{MU}_*\textup{MU}\text{-}\mathbf{Comod}^\textup{ev}$ has enough injective objects.
\end{lemma}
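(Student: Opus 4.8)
The plan is to deduce this from the adjunction~\eqref{eq:adjModComod} between comodules and modules, exactly as in the proof of the motivic Lemma~\ref{lemma:injresalg}. The underlying principle is classical: for a flat Hopf algebroid, the ``cofree comodule'' functor $M \mapsto \Gamma \otimes_A M$ is a right adjoint of an exact functor, so it carries injectives to injectives, and the coaction embeds any comodule into a cofree one.

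First I would record the relevant exactness and even-ness. The Hopf algebroid here is $(A,\Gamma) = (\textup{MU}_*, \textup{MU}_*\textup{MU})$, $p$-completed, with $\textup{MU}_*\textup{MU} \cong \textup{MU}_*[b_1,b_2,\dots]$, $|b_i| = 2i$; in particular $\Gamma$ is concentrated in even degrees and is free, hence flat, over $A$. Consequently the forgetful functor $U \colon \textup{MU}_*\textup{MU}\text{-}\mathbf{Comod}^\textup{ev} \to \textup{MU}_*\text{-}\mathbf{Mod}^\textup{ev}$ is exact, and because $\Gamma$ and $A$ both live in even degrees its right adjoint $\Gamma \otimes_A (-)$ indeed lands in the even subcategory, as asserted in~\eqref{eq:adjModComod}. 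Being a right adjoint of an exact functor, $\Gamma \otimes_A (-)$ preserves injective objects: for $E$ injective in $\textup{MU}_*\text{-}\mathbf{Mod}^\textup{ev}$, the functor $\mathrm{Hom}_{\textup{Comod}}(-,\Gamma\otimes_A E) \cong \mathrm{Hom}_{\textup{Mod}}(U(-),E)$ is exact.

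Next I would invoke that $\textup{MU}_*\text{-}\mathbf{Mod}^\textup{ev}$ has enough injectives. This is standard: even graded modules over the even graded ring $\textup{MU}_*$ form a Grothendieck abelian category (equivalently, ordinary $\mathbb{Z}$-graded modules over the re-indexed ring $\textup{MU}_{2*}$), and any Grothendieck category has enough injectives. Hence for a comodule $M$ we may pick a monomorphism $\iota \colon U(M) \hookrightarrow J$ in $\textup{MU}_*\text{-}\mathbf{Mod}^\textup{ev}$ with $J$ injective, and $\Gamma \otimes_A J$ is then injective in $\textup{MU}_*\textup{MU}\text{-}\mathbf{Comod}^\textup{ev}$ by the previous paragraph. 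Finally I would produce the embedding: the coaction map $\psi \colon M \to \Gamma \otimes_A M$ is a morphism of comodules, and it is a monomorphism since $(\epsilon \otimes \mathrm{id}) \circ \psi = \mathrm{id}_M$ under $\Gamma \otimes_A M \to A \otimes_A M \cong M$ exhibits it as split injective on underlying modules (and $U$ reflects monomorphisms). Composing with $\mathrm{id}_\Gamma \otimes \iota \colon \Gamma \otimes_A M \hookrightarrow \Gamma \otimes_A J$ — a monomorphism because $\Gamma$ is flat over $A$ — gives a monomorphism of comodules $M \hookrightarrow \Gamma \otimes_A J$ into an injective comodule, which is what we want.

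The argument is routine; there is no serious obstacle. The only points needing attention — and precisely the points that the reference to Lemma~\ref{lemma:injresalg} is meant to cover — are bookkeeping: that every step (choosing $J$, forming $\Gamma\otimes_A J$, forming $\psi$) stays within the \emph{even} subcategory, and that one is using $U$ as a \emph{left} adjoint, so that it is $\Gamma\otimes_A(-)$, not $U$, which preserves injectives.
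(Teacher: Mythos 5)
Your argument is correct and follows the same route the paper intends: embed the underlying even module into an injective $J$, use flatness of $\textup{MU}_*\textup{MU}$ over $\textup{MU}_*$ and the split-mono coaction to embed $M$ into the cofree comodule $\textup{MU}_*\textup{MU}\otimes_{\textup{MU}_*}J$, and observe that cofree-on-injective is injective because $\textup{MU}_*\textup{MU}\otimes_{\textup{MU}_*}(-)$ is right adjoint to the exact forgetful functor. You have simply spelled out in more detail what the paper compresses into a reference to the proof of Lemma~\ref{lemma:injresalg}.
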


We will construct the absolute Adams-Novikov spectral sequence, namely, for any two objects $X$ and $Y$ in this category, there is a strongly convergent spectral sequence that collapses at a finite page.
\begin{equation*}
\Ext^{s,t,w}_{\textup{MU}^{\textup{mot}}_{*,*} \textup{MU}^{\textup{mot}}/\tau}(\textup{MU}^{\textup{mot}}_{*,*} X, \textup{MU}^{\textup{mot}}_{*,*} Y) \Longrightarrow [\Sigma^{t-s, w} X, Y ]_{\widehat{S^{0,0}}/\tau},
\end{equation*}
with differentials
$$d_r \colon E_r^{s, t, w} \lto E_r^{s+r, t+r-1, w}.$$
The existence of this absolute Adams-Novikov spectral sequence in the category $\widehat{S^{0,0}}/\tau\text{-}\mathbf{Mod}_{\textup{harm}}^b$ is proved as Theorem \ref{cor:ANss} in Section 5.


Using the absolute Adams-Novikov spectral sequence,  we will prove the following Corollaries \ref{cor:Cttstruct1} and \ref{cor:L45} in Section 5.3.

\begin{cor} \label{cor:Cttstruct1}
For $X \in \widehat{S^{0,0}}/\tau\text{-}\mathbf{Mod}_{\textup{harm}}^{b,\geq0}$ and $Y \in \widehat{S^{0,0}}/\tau\text{-}\mathbf{Mod}_{\textup{harm}}^{b,\leq0}$, 
the following map induced by applying the functor $\textup{MU}^\textup{mot}_{*,*}$ is an isomorphism.
\begin{equation*}
[X,Y]_{\widehat{S^{0,0}}/\tau} \longrightarrow \Hom_{\textup{MU}^{\textup{mot}}_{*,*}\textup{MU}^{\textup{mot}}/\tau}( \textup{MU}^{\textup{mot}}_{*,*} X,\textup{MU}^{\textup{mot}}_{*,*} Y ).
\end{equation*}
\end{cor}

\begin{cor}\label{cor:L45}
Given $X, Y \in \widehat{S^{0,0}}/\tau\text{-}\mathbf{Mod}_{{\textup{harm}}}^\heartsuit$, for any bidegree $(t,w)$, there is an isomorphism
\begin{equation*}
[\Sigma^{t,w} X,Y]_{\widehat{S^{0,0}}/\tau} \cong \Ext^{2w -t, 2w, w}_{\textup{MU}^{\textup{mot}}_{*,*}\textup{MU}^{\textup{mot}}/\tau}(\textup{MU}^{\textup{mot}}_{*,*}X, \textup{MU}^{\textup{mot}}_{*,*}Y).
\end{equation*}
\end{cor}

\subsection{The equivalence on the heart} 

Now we are ready to show that the functor $\textup{MU}^\textup{mot}_{*,*}$ induces an equivalence on the heart. The following is a special case of Corollary \ref{cor:Cttstruct1}.

\begin{cor} \label{cor:CtCellheartff}
The functor
\begin{equation*} 
\textup{MU}^{\textup{mot}}_{*,*} \colon  \widehat{S^{0,0}}/\tau\text{-}\mathbf{Mod}_{\textup{harm}}^\heartsuit  \stackrel{}{\ltoo} \textup{MU}^{\textup{mot}}_{*,*}\textup{MU}^{\textup{mot}}/\tau\text{-}\mathbf{Comod}^0
\end{equation*}
is fully faithful. Here the right hand side is understood as a discrete $\infty$-category.
\end{cor}

As a consequence, Corollary~\ref{cor:CtCellheartff} shows that $\widehat{S^{0,0}}/\tau\text{-}\mathbf{Mod}_{\textup{harm}}^\heartsuit$ is also a discrete $\infty$-category.

\begin{proof}
For $n \geq 0$ and two objects $X,Y \in \widehat{S^{0,0}}/\tau\text{-}\mathbf{Mod}_{\textup{harm}}^\heartsuit$, by Corollary~\ref{cor:Cttstruct1}, the edge homomorphism
\begin{equation*}
\left[\Sigma^{n,0} X, Y \right]_{\widehat{S^{0,0}}/\tau} \stackrel{\textup{MU}^{\textup{mot}}_{*,*}}{\ltoo} \Hom_{\textup{MU}^{\textup{mot}}_{*,*}\textup{MU}^{\textup{mot}}/\tau}\left( \textup{MU}^{\textup{mot}}_{*,*} \Sigma^{n,0}X, \textup{MU}^{\textup{mot}}_{*,*} Y \right)
\end{equation*}
is an isomorphism. When $n>0$, the bigraded module $\textup{MU}^{\textup{mot}}_{*,*} \Sigma^{n,0}X$ is concentrated in positive Chow-Novikov degree. So the right hand side of the above isomorphism is concentrated in the case $n=0$. This shows that $\textup{MU}^{\textup{mot}}_{*,*}$ is fully faithful on $\widehat{S^{0,0}}/\tau\text{-}\mathbf{Mod}_{\textup{harm}}^\heartsuit$.
\end{proof}

To show the equivalence on the heart, we only need to show the essential surjectivity of $\textup{MU}^{\textup{mot}}_{*,*}$.

Unlike the case for modules over $\textup{MU}^{\textup{mot}}_{*,*}/\tau$, we do not have free resolutions for comodules over $\textup{MU}^{\textup{mot}}_{*,*}\textup{MU}^{\textup{mot}}/\tau$. We will instead use Landweber's Filtration Theorem to realize all comodules that are finitely presented, and then extend the result using filtered colimits. In particular, all Smith-Toda complexes exist in $\widehat{S^{0,0}}/\tau\text{-}\mathbf{Mod}$. 

We start with the following 2-out-of-3 Lemma.

\begin{lemma} \label{lemma:CtCellesssurj2outof3}
Consider any short exact sequence in $\textup{MU}^{\textup{mot}}_{*,*}\textup{MU}^{\textup{mot}}/\tau\text{-}\mathbf{Comod}^0$
\begin{equation} \label{eq:pfheartses}
0 \lto M' \stackrel{f'}{\lto} M \stackrel{f''}{\lto} M'' \lto 0.
\end{equation}
If any two of the three comodules $M', \ M, \ M''$ are realizable in $\widehat{S^{0,0}}/\tau\text{-}\mathbf{Mod}_{\textup{harm}}^\heartsuit$, then so is the third.
\end{lemma}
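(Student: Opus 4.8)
The strategy is to handle the two directions of the 2-out-of-3 statement by realizing the relevant comodules as homotopy groups of harmonic objects and building cofiber sequences between them. Suppose first that $M'$ and $M''$ are realizable, say by $Y' $ and $Y''$ in $\widehat{S^{0,0}}/\tau\text{-}\mathbf{Mod}_{\textup{harm}}^\heartsuit$ with $\textup{MU}^{\textup{mot}}_{*,*} Y' \cong M'$ and $\textup{MU}^{\textup{mot}}_{*,*} Y'' \cong M''$. The class of the extension \eqref{eq:pfheartses} in $\Ext^1_{\textup{MU}^{\textup{mot}}_{*,*}\textup{MU}^{\textup{mot}}/\tau}(M'', M')$ should correspond, via Corollary~\ref{cor:L45} (applied with the appropriate suspension, so that $\Ext^1$ on the algebraic side matches $[\Sigma^{-1,0} Y'', Y']$ on the topological side — here one checks the grading $2w-t$ gives $1$ exactly when $t=-1,w=0$), to a morphism $\Sigma^{-1,0} Y'' \longrightarrow Y'$ in $\widehat{S^{0,0}}/\tau\text{-}\mathbf{Mod}_{\textup{harm}}$. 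I would let $Y$ be the cofiber of this map, so that there is a cofiber sequence $Y' \to Y \to Y''$. The long exact sequence in $\textup{MU}^{\textup{mot}}_{*,*}$, together with the fact that $M'$ and $M''$ are each concentrated in Chow-Novikov degree $0$, forces the connecting maps to vanish and yields a short exact sequence $0 \to M' \to \textup{MU}^{\textup{mot}}_{*,*}Y \to M'' \to 0$; by construction of the connecting morphism from the $\Ext^1$-class this short exact sequence is the given one \eqref{eq:pfheartses}, so $\textup{MU}^{\textup{mot}}_{*,*} Y \cong M$ and in particular $Y$ lies in the heart. Finally $Y$ is harmonic because harmonicity is closed under fibers/cofibers in $\widehat{S^{0,0}}/\tau\text{-}\mathbf{Mod}_{\textup{cell}}$ (as recorded just after Definition~\ref{def:pisscomplete} and in Section~4.1).

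For the two remaining cases the argument is similar but one extracts a sub- or quotient-object rather than forming an extension. Suppose $M$ and $M''$ are realizable by $Y$ and $Y''$. The surjection $f'' \colon M \twoheadrightarrow M''$ is a morphism in the heart, hence by Corollary~\ref{cor:CtCellheartff} (full faithfulness on the heart) it is realized by a map $g'' \colon Y \to Y''$ in $\widehat{S^{0,0}}/\tau\text{-}\mathbf{Mod}_{\textup{harm}}^\heartsuit$; I would take $Y'$ to be the fiber of $g''$. Since $f''$ is surjective and everything is concentrated in Chow-Novikov degree $0$, the long exact sequence in $\textup{MU}^{\textup{mot}}_{*,*}$ degenerates to $0 \to \textup{MU}^{\textup{mot}}_{*,*} Y' \to M \xrightarrow{f''} M'' \to 0$, identifying $\textup{MU}^{\textup{mot}}_{*,*} Y' \cong \ker f'' \cong M'$; again $Y'$ is in the heart (its homotopy is concentrated in Chow-Novikov degree $0$) and harmonic. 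The case where $M'$ and $M$ are realizable is dual: realize the injection $f' \colon M' \hookrightarrow M$ by a map $Y' \to Y$ and take $Y''$ to be its cofiber, with the same Chow-Novikov degree bookkeeping identifying $\textup{MU}^{\textup{mot}}_{*,*} Y'' \cong \operatorname{coker} f' \cong M''$.

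The main obstacle I anticipate is the bookkeeping in the first case: one must verify that the morphism $\Sigma^{-1,0} Y'' \to Y'$ produced from the $\Ext^1$-class via Corollary~\ref{cor:L45} actually has cofiber realizing the \emph{given} extension and not merely \emph{some} extension with the same sub and quotient. This amounts to checking that the isomorphism of Corollary~\ref{cor:L45} in the relevant degree is compatible with the connecting-homomorphism description of $\Ext^1$ — i.e.\ that the topological Toda-bracket/cofiber description of extensions matches the algebraic Yoneda description under $\textup{MU}^{\textup{mot}}_{*,*}$. This is a standard comparison but requires care; it follows because the absolute Adams--Novikov spectral sequence of Theorem~\ref{cor:ANss} (equivalently Corollary~\ref{cor:L45}) is itself built from the cobar/cosimplicial resolution, so the edge isomorphism in filtration $1$ is by construction the one that sends a cofiber sequence to its extension class. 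A secondary, routine point is checking in each case that no differential in the Adams--Novikov spectral sequence can interfere — but this is immediate from the Chow-Novikov degree argument already used in the proofs of Corollaries~\ref{cor:Cttstruct1} and \ref{cor:L45}, since all objects in sight have homotopy concentrated in Chow-Novikov degree $0$.
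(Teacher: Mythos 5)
Your proposal is correct and follows essentially the same route as the paper: both handle the three cases identically, realizing $f'$ or $f''$ via full faithfulness on the heart (Corollary~\ref{cor:CtCellheartff}) and taking cofiber/fiber for two of the cases, and in the extension case realizing the class in $\Ext^{1,0,0}$ as a map $\Sigma^{-1,0}Y'' \to Y'$ via Corollary~\ref{cor:L45}, taking the cofiber, and using collapse of the absolute Adams--Novikov spectral sequence at $E_2$ to identify the resulting short exact sequence with the given one. Your flagged concern about matching the Yoneda $\Ext^1$ class with the topological extension is exactly the point the paper addresses by appealing to the $E_2$-collapse, though note the absolute tower is built from injective resolutions rather than the cobar resolution you mention; this does not affect the argument.
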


\begin{proof}
There are three cases that we need to prove.
\begin{enumerate}
	\item Suppose that both comodules $M'$ and $M$ are realizable by
$$M' \cong \textup{MU}^{\textup{mot}}_{*,*}X', \ M \cong \textup{MU}^{\textup{mot}}_{*,*}X.$$  
By Corollary \ref{cor:CtCellheartff}, the algebraic map $f'$ is also realizable as the $\textup{MU}^{\textup{mot}}_{*,*}$-homology of a map
\begin{displaymath}
	\xymatrix{
X' \ar[r]^{F'} & X.
} 
\end{displaymath}
Since $\widehat{S^{0,0}}/\tau\text{-}\mathbf{Mod}_{\textup{harm}}^b$ is closed under taking cofibers, we can realize the comodule $M''$ by the $\textup{MU}^{\textup{mot}}_{*,*}$-homology of the cofiber of $F'$. In fact, the associated long exact sequence on the  $\textup{MU}^{\textup{mot}}_{*,*}$-homology tells us  
$$M'' \cong \textup{MU}^{\textup{mot}}_{*,*}X'',$$
where $X''$ is the the cofiber of $F'$.

\item Suppose that both comodules $M$ and $M''$ are realizable. Then we realize the algebraic map and take the fiber instead. The same argument shows that it realizes $M'$.

\item Suppose that both comodules $M'$ and $M''$ are realizable by
$$M' \cong \textup{MU}^{\textup{mot}}_{*,*}X', \ M'' \cong \textup{MU}^{\textup{mot}}_{*,*}X''.$$ 
In this case, the short exact sequence \eqref{eq:pfheartses} corresponds to an element in 
$$\Ext^{1,0,0}_{\textup{MU}^{\textup{mot}}_{*,*}\textup{MU}^{\textup{mot}}/\tau}(M'',M').$$ 
By Corollary \ref{cor:L45}, this algebraic element can be realized by a map 
$$F : \Sigma^{-1,0} X'' \lto X'.$$ 
Define $X$ to be the cofiber of the map $F$. We claim that $X$ realizes $M$. 
In fact, the map $F$ has Adams-Novikov filtration 1, so it induces the zero homomorphism on $\textup{MU}^{\textup{mot}}_{*,*}$. Therefore, the cofiber sequence that defines $X$ induces a short exact sequence on $\textup{MU}^{\textup{mot}}_{*,*}$. Since the isomorphism in Corollary~\ref{cor:L45} comes from the collapse of the absolute Adams-Novikov spectral sequence on the $E_2$-page, this shows that this short exact on $\textup{MU}^{\textup{mot}}_{*,*}$ is isomorphic to the the short exact sequence \eqref{eq:pfheartses}. Therefore,  
$$M \cong \textup{MU}^{\textup{mot}}_{*,*}X.$$
\end{enumerate}
This completes the proof.
\end{proof}

Now we prove the equivalence on the heart.

\begin{prop} \label{prop:CtCellequivheart}
The functor
\begin{equation*}
\textup{MU}^{\textup{mot}}_{*,*} \colon  \widehat{S^{0,0}}/\tau\text{-}\mathbf{Mod}_{\textup{harm}}^\heartsuit \stackrel{\cong}{\ltoo} \textup{MU}^{\textup{mot}}_{*,*}\textup{MU}^{\textup{mot}}/\tau\text{-}\mathbf{Comod}^0
\end{equation*}
is an equivalence of categories.
\end{prop}

\begin{proof}
We only need to show that the functor $\textup{MU}^{\textup{mot}}_{*,*}$ is essentially surjective. In other words, for any comodule $M \in \textup{MU}^{\textup{mot}}_{*,*}\textup{MU}^{\textup{mot}}/\tau\text{-}\mathbf{Comod}^0$, we show that it can be realized as a harmonic $\widehat{S^{0,0}}/\tau$-module $X$, whose $\textup{MU}^{\textup{mot}}_{*,*}$-homology is $M$. This follows from Lemmas \ref{lemma:CtCellesssurj2outof3}, \ref{lem:mumucolim}, \ref{lemma:pisscompclosedunderubfilcolim} and Landweber's Filtration Theorem via the equivalence
\begin{equation*}
\textup{MU}^{\textup{mot}}_{*,*}\textup{MU}^{\textup{mot}}/\tau\text{-}\mathbf{Comod}^0 \cong \textup{MU}_*\textup{MU}\text{-}\mathbf{Comod}^\textup{ev}.
\end{equation*}


In fact, $\textup{MU}_*$ corresponds $\textup{MU}^{\textup{mot}}_{*,*}/\tau$, and is therefore realized by $\widehat{S^{0,0}}/\tau$. By Lemma \ref{lemma:CtCellesssurj2outof3}, we can inductively realize comodules $\textup{MU}_*/I_n$ for all $n \geq 0$. Then by Landweber's Filtration Theorem and  
Lemma \ref{lemma:CtCellesssurj2outof3}, we can realized all finitely presented comodules. 

For any comodule $M \in \textup{MU}^{\textup{mot}}_{*,*}\textup{MU}^{\textup{mot}}/\tau\text{-}\mathbf{Comod}^0$, or equivalently a comodule over $\textup{MU}_*\textup{MU}$ that is concentrated in even degrees, by Lemma~\ref{lem:mumucolim}, we can write it as a filtered colimit of finitely presented ones $M_{\alpha}$,
$$M \cong \colim M_{\alpha}.$$
By the above discussion, we can realize each $M_{\alpha}$ by $X_{\alpha} \in \widehat{S^{0,0}}/\tau\text{-}\mathbf{Mod}_{\textup{harm}}^\heartsuit$. Moreover, by Corollary \ref{cor:CtCellheartff}, we can realize the whole filtered system $\{ M_{\alpha} \}$ by a filtered system $\{ X_{\alpha} \}$. Taking the colimit, we define
$$X \coloneqq \colim X_{\alpha}.$$
By Lemma \ref{lemma:pisscompclosedunderubfilcolim}, $X$ is harmonic. Since $\textup{MU}^{\textup{mot}}_{*,*}$ commutes with filtered colimits, we have that the comodule $M$ is realized by $X$. This completes the proof.
\end{proof}

\subsection{The $t$-structure and the equivalence of categories} \label{subsubsec:CtCellprooftstruct}

We prove that two full subcategories satisfy the required axioms for the $t$-structure.

\begin{prop} \label{prop:tstrCtCellb}
The pair of full subcategories $$\widehat{S^{0,0}}/\tau\text{-}\mathbf{Mod}_{\textup{harm}}^{b,\geq0}, \  \widehat{S^{0,0}}/\tau\text{-}\mathbf{Mod}_{\textup{harm}}^{b,\leq0}$$ 
defines a bounded $t$-structure on $\widehat{S^{0,0}}/\tau\text{-}\mathbf{Mod}_{\textup{harm}}^b$.
\end{prop}
{
\begin{proof}
The proof is exactly analogous to the proof of Proposition~\ref{prop:tstructonMGLCellb}, with Corollary~\ref{cor:Cttstruct1} replacing Corollary~\ref{C47}, and Proposition~\ref{prop:CtCellequivheart} replacing Proposition~\ref{prop:equivMGLheart}.

\end{proof}

Having this $t$-structure on $\widehat{S^{0,0}}/\tau\text{-}\mathbf{Mod}_{{\textup{harm}}}^b$, the main result of this section follows from Proposition \ref{prop:injcat}.


\begin{thm} \label{thm:equivCtCellcompalg}
There is a $t$-exact equivalence of stable $\infty$-categories
\begin{equation*}
\mathcal{D}^b(\textup{MU}_*\textup{MU}\text{-}\mathbf{Comod}^\textup{ev}) \stackrel{\cong}{\ltoo}\widehat{S^{0,0}}/\tau\text{-}\mathbf{Mod}_{\textup{harm}}^b.
\end{equation*}
\end{thm}

\begin{proof}
The proof is analogous to the proof of Theorem \ref{thm:equivMGLCellwithalg}. It is clear that the $t$-structure is bounded.  By Proposition \ref{prop:CtCellequivheart}, and the equivalence 
\begin{equation*}
\textup{MU}^{\textup{mot}}_{*,*}\textup{MU}^{\textup{mot}}/\tau\text{-}\mathbf{Comod}^0 \cong \textup{MU}_*\textup{MU}\text{-}\mathbf{Comod}^\textup{ev},
\end{equation*}
the heart can be identified as comodules over $\textup{MU}_*\textup{MU}$. By Lemma \ref{lem:injmumu}, it has enough injective objects.

It remains to show that for objects 
$$X, \ Y \in \widehat{S^{0,0}}/\tau\text{-}\mathbf{Mod}_{\textup{harm}}^\heartsuit$$
 with $\textup{MU}^{\textup{mot}}_{*,*}Y$ injective over $\textup{MU}^{\textup{mot}}_{*,*}\textup{MU}^{\textup{mot}}/\tau$, we have that
$$[\Sigma^{-i,0}X, Y]_{\widehat{S^{0,0}}/\tau}=0$$ 
for any $i > 0$.

We apply the absolute Adams-Novikov spectral sequence 
\begin{equation*}
\Ext^{s,t,w}_{\textup{MU}^{\textup{mot}}_{*,*}\textup{MU}^{\textup{mot}}/\tau}\left( \textup{MU}^{\textup{mot}}_{*,*}X, \textup{MU}^{\textup{mot}}_{*,*}Y \right) \Longrightarrow \left[\Sigma^{t-s,w} X,Y \right]_{\widehat{S^{0,0}}/\tau}
\end{equation*}
in the category $\widehat{S^{0,0}}/\tau\text{-}\mathbf{Mod}_{\textup{harm}}^b$, as in Corollary \ref{cor:ANss}.

Since $\textup{MU}^{\textup{mot}}_{*,*}Y$ is an injective $\textup{MU}^{\textup{mot}}_{*,*}\textup{MU}^{\textup{mot}}/\tau$-comodule, the $E_2$-page of the spectral sequence is concentrated on the line $s=0$, and therefore collapses at the $E_2$-page.

Moreover, since both $\textup{MU}^{\textup{mot}}_{*,*}X$ and $\textup{MU}^{\textup{mot}}_{*,*}Y$ are concentrated in Chow-Novikov degree 0,
the $E_2$-page is also concentrated in Chow-Novikov degree 0, namely $t - 2w = 0$ in this case.

We are interested in the case $t-s = -i<0$ and $w=0$. By the above analysis, the corresponding tri-degrees in the $E_2$-page are all 0 in our case. Therefore, we must have that
$$[\Sigma^{-i,0}X, Y]_{\widehat{S^{0,0}}/\tau}=0.$$ 
This completes the proof.
\end{proof}

\begin{remark} \label{rem:bigrading}
	We comment on the bi-grading in the equivalence of stable $\infty$-categories in Theorem \ref{thm:equivCtCellcompalg} through some examples.
	\begin{enumerate}
		\item 
	It is clear that $\widehat{S^{0,0}}/\tau$ corresponds	 to $\textup{MU}_*$ in the derived category of $\textup{MU}_*\textup{MU}$-comodules. 
\item	
	Consider $\Sigma^{2,1}\widehat{S^{0,0}}/\tau$. Since its $\textup{MU}^{\textup{mot}}$-homology is concentrated in Chow-Novikov degree 0, it lives in the heart. Therefore, by the $t$-exactness, it corresponds to a cochain complex that is concentrated in cohomological degree 0. A direct computation show that it corresponds to $\Sigma^{2}\textup{MU}_*$. We also denote this object in the category $\mathcal{D}^b(\textup{MU}_*\textup{MU}\text{-}\mathbf{Comod}^\textup{ev})$ by $\Sigma^{2,1}\textup{MU}_*$.
	\item Consider $\Sigma^{1,0}\widehat{S^{0,0}}/\tau$. Its $\textup{MU}^{\textup{mot}}$-homology is concentrated in Chow-Novikov degree $1$. By the $t$-exactness, it corresponds to the cochain complex that is concentrated in cohomological degree $-1$, with the comodule $\textup{MU}_*$ in that cohomological degree. We also denote this object by $\Sigma^{1,0}\textup{MU}_*$.
	\item In general, denote by $\Sigma^{m,n}\textup{MU}_*$ the object in the category $\mathcal{D}^b(\textup{MU}_*\textup{MU}\text{-}\mathbf{Comod}^\textup{ev})$ that $\Sigma^{m,n}\widehat{S^{0,0}}/\tau$ corresponds to. Then $\Sigma^{m,n}\textup{MU}_*$ is a cochain complex that is concentrated in cohomological degree $2n-m$, with the comodule $\Sigma^{2n}\textup{MU}_*$ in that cohomological degree.
	\end{enumerate}

\end{remark}

By Proposition~\ref{prop:bpbpmumucomod}, there exists an exact equivalence of categories between\\
${\textup{BP}_*\textup{BP}\text{-}\mathbf{Comod}}^{\textup{ev}}$ and $ \textup{MU}_*\textup{MU}\text{-}\mathbf{Comod}^\textup{ev}$. Therefore, Theorem~\ref{thm:equivCtCellcompalg} implies Theorem~1.1.

\begin{remark} \label{symmo}
	The equivalence of stable $\infty$-categories in Theorem~1.1 is actually symmetric monoidal. In fact, the equivalence preserves colimits, so we have the following commutative diagram
\begin{displaymath}
\xymatrix{
\mathcal{D}^{-}({{\textup{BP}_*\textup{BP}\text{-}\mathbf{Comod}}^{\textup{ev}}})_{\geq 0} \ar[r] & (\widehat{S^{0,0}}/\tau\text{-}\mathbf{Mod}_{\textup{harm}}^{b,\geq 0})^\wedge \\
\mathbf{s}(\textup{BP}_*\textup{BP}\text{-}\mathbf{Comod}^\textup{ev}_{\textup{rel proj}}) \ar[r] \ar[u]^{F_1} & \mathbf{s}(\widehat{S^{0,0}}/\tau\text{-}\mathbf{Mod}_{\textup{harm}}^{\heartsuit}) \ar[u]^{F_2}
}	
\end{displaymath}
Here $(\widehat{S^{0,0}}/\tau\text{-}\mathbf{Mod}_{\textup{harm}}^{b,\geq 0})^\wedge$ is the left completion of $\widehat{S^{0,0}}/\tau\text{-}\mathbf{Mod}_{\textup{harm}}^{b, \geq 0}$ with respective to its t-structure, $\mathbf{s}(\textup{BP}_*\textup{BP}\text{-}\mathbf{Comod}^\textup{ev}_{\textup{rel proj}})$ is the category of simplicial objects of relative projective $\textup{BP}_*\textup{BP}$-comodules that are concentrated in even degrees (see \cite[Definition~2.1.2]{Hovey}), and $\mathbf{s}(\widehat{S^{0,0}}/\tau\text{-}\mathbf{Mod}_{\textup{harm}}^{\heartsuit})$ is the category of simplicial objects in $\widehat{S^{0,0}}/\tau\text{-}\mathbf{Mod}_{\textup{harm}}^{\heartsuit}$. The horizontal arrows are induced by the equivalence in Theorem~1.1, and the vertical arrows are geometric realizations.

By Proposition~5.5.9.14 of \cite{HTT}, geometric realization functor is symmetric monoidal. The lower horizontal arrow is also symmetric monoidal, since it is level wise symmetric monoidal, which is implied by Proposition~\ref{prop:CtCellequivheart} and the universal coefficient theorem (see \cite[Proposition~7.10]{DI04}).

Let $W_1$ and $W_2$ denote the class of morphisms in the $\infty$-categories $\mathbf{s}(\textup{BP}_*\textup{BP}\text{-}\mathbf{Comod}^\textup{ev}_{\textup{rel proj}})$ and $\mathbf{s}(\widehat{S^{0,0}}/\tau\text{-}\mathbf{Mod}_{\textup{harm}}^{\heartsuit})$ that are sent to equivalences by the functors $F_1$ and $F_2$. Let $\mathbf{s}(\textup{BP}_*\textup{BP}\text{-}\mathbf{Comod}^\textup{ev}_{\textup{rel proj}})[W_1^{-1}]$ and $\mathbf{s}(\widehat{S^{0,0}}/\tau\text{-}\mathbf{Mod}_{\textup{harm}}^{\heartsuit})[W_2^{-1}]$ denote the localizations with respect to $W_1$ and $W_2$. The above commutative diagram factors through the following one.
\begin{displaymath}
\xymatrix{
\mathcal{D}^{-}({{\textup{BP}_*\textup{BP}\text{-}\mathbf{Comod}}^{\textup{ev}}})_{\geq 0} \ar[r] & (\widehat{S^{0,0}}/\tau\text{-}\mathbf{Mod}_{\textup{harm}}^{b,\geq 0})^\wedge \\
\mathbf{s}(\textup{BP}_*\textup{BP}\text{-}\mathbf{Comod}^\textup{ev}_{\textup{rel proj}})[W_1^{-1}] \ar[r] \ar[u] & \mathbf{s}(\widehat{S^{0,0}}/\tau\text{-}\mathbf{Mod}_{\textup{harm}}^{\heartsuit})[{W_2}^{-1}] \ar[u]
}	
\end{displaymath}

By the Dold-Kan correspondence and Lemma~1.4.6 of \cite{Hovey}, the left vertical arrow is an equivalence.

By Proposition~2.2.1.9 of \cite{HA}, localization functor preserves symmetric monoidal structure. Therefore, the upper horizontal arrow is symmetric monoidal. Restricting it to the bounded subcategory and by the universal property of stablization \cite[Theorem~2.14]{Robalo}, this gives us the claim.  	
	
\end{remark}

Now we prove Corollary 1.2.

\begin{proof}[Proof of Corollary 1.2]
Let $\widehat{S^{0,0}}/\tau\text{-}\mathbf{Mod}_\textup{fin}$ be the category of finite cellular motivic left module spectra over $\widehat{S^{0,0}}/\tau$, and $\mathcal{D}^b({{\textup{BP}_*\textup{BP}\text{-}\mathbf{Comod}}^{\textup{ev}}})_\textup{fin}$ be the full subcategory of\\ $\mathcal{D}^b({{\textup{BP}_*\textup{BP}\text{-}\mathbf{Comod}}^{\textup{ev}}})$ consisting of objects generated by $\textup{BP}_*$, under finite colimits and shifts by both homological and even internal degrees. 

Since $\widehat{S^{0,0}}/\tau$ is harmonic, the category $\widehat{S^{0,0}}/\tau\text{-}\mathbf{Mod}_\textup{fin}$ is the full subcategory of\\ $\widehat{S^{0,0}}/\tau\text{-}\mathbf{Mod}_{\textup{harm}}^{b}$ consisting of objects generated by $\widehat{S^{0,0}}/\tau$, under finite colimits and shifts by both the topological degree and the motivic weight.

Since $\widehat{S^{0,0}}/\tau$ corresponds to $\textup{BP}_*$ under the equivalence in Theorem \ref{intro:mainthmpart1}, we have an equivalence of stable $\infty$-categories  with given $t$-structures at each prime $p$
$$\mathcal{D}^b({{\textup{BP}_*\textup{BP}\text{-}\mathbf{Comod}}^{\textup{ev}}})_\textup{fin}\simeq \widehat{S^{0,0}}/\tau\text{-}\mathbf{Mod}_\textup{fin}.$$

By Theorem 5.3.5.11 of Lurie's Higher Topos Theory \cite{HTT}, if $\mathcal{D}$ is an $\infty$-category that admits filtered colimits, and $\mathcal{C}$ is an essentially small full subcategory of $\mathcal{D}$, whose elements are compact, and generate $\mathcal{D}$ under filtered colimits, then $\mathcal{D}$ is equivalent to the $\infty$-category $\textup{Ind}(\mathcal{C})$ of $\textup{Ind}$-objects of $\mathcal{C}$.

It follows that  
$$\widehat{S^{0,0}}/\tau\text{-}\mathbf{Mod}_\textup{cell}\simeq \textup{Ind}(\widehat{S^{0,0}}/\tau\text{-}\mathbf{Mod}_\textup{fin}).$$

On the other hand, $\textup{BP}_*$ generates \hbox{$\mathcal{D}^b({\textup{BP}_*\textup{BP}\text{-}\mathbf{Comod}}^{\textup{ev}})_\textup{fin}$} under finite colimits. Moreover, it is proved by Hovey in \cite[Section 6]{Hovey} that objects in the category\\ \hbox{$\mathcal{D}^b({\textup{BP}_*\textup{BP}\text{-}\mathbf{Comod}}^{\textup{ev}})_\textup{fin}$} are compact, and generate $\mathbf{Stable}(\textup{BP}_*\textup{BP}\text{-}\mathbf{Comod}^\textup{ev})$ under filtered colimits. It then follows from Theorem 5.3.5.11 of \cite{HTT} that 
$$\mathbf{Stable}(\textup{BP}_*\textup{BP}\text{-}\mathbf{Comod}^\textup{ev})\simeq \textup{Ind}(\mathcal{D}^b({\textup{BP}_*\textup{BP}\text{-}\mathbf{Comod}}^{\textup{ev}})_\textup{fin}).$$

Therefore, we have an equivalence of stable $\infty$-categories at each prime $p$
$$\mathbf{Stable}(\textup{BP}_*\textup{BP}\text{-}\mathbf{Comod}^\textup{ev})\simeq \widehat{S^{0,0}}/\tau\text{-}\mathbf{Mod}_\textup{cell}.$$
\end{proof}

\section{The absolute Adams-Novikov spectral sequence} \label{subsubsec:ANSS}

In Section \ref{subsec:MLGmod}, we used the universal coefficient spectral sequence 
\begin{equation*}
E_2^{s,t,w}=\Ext^{s,t,w}_{\textup{MU}^{\textup{mot}}_{*,*}/\tau}(\piss X, \piss Y) \Longrightarrow \left[ \Sigma^{t-s,w} X, Y \right]_{\textup{MU}^{\textup{mot}}/\tau}
\end{equation*}
of Theorem \ref{thm:UCTMGL} to compute homotopy classes of maps in $\textup{MU}^{\textup{mot}}/\tau\text{-}\mathbf{Mod}_{\textup{cell}}^b$. This is a very convenient tool since both the $t$-structure on $\textup{MU}^{\textup{mot}}/\tau\text{-}\mathbf{Mod}_{\textup{cell}}^b$ and the $E_2$-page of the universal coefficient spectral sequence are defined in terms of homotopy groups. The bounds in the $t$-structure correspond to vanishing areas in the spectral sequence. 

For the category $\widehat{S^{0,0}}/\tau\text{-}\mathbf{Mod}_{\textup{harm}}^b$, the $t$-structure is defined in terms of $\textup{MU}^{\textup{mot}}$-homology. We therefore need a version of the motivic Adams-Novikov spectral sequence that computes $\widehat{S^{0,0}}/\tau$-linear maps.

Recall from Dugger-Isaksen \cite[Section 8]{DuggerIsaksenMASS} or Hu-Kriz-Ormsby \cite{HKOrem} the usual $\textup{MU}^{\textup{mot}}$-based motivic Adams-Novikov spectral sequence
\begin{equation*}
\Ext^{*,*,*}_{\textup{MU}^{\textup{mot}}_{*,*}\textup{MU}^{\textup{mot}}}(\textup{MU}^{\textup{mot}}_{*,*}\widehat{S^{0,0}}, \textup{MU}^{\textup{mot}}_{*,*}Y) \Longrightarrow \piss Y^{\smas}_{\textup{MU}^{\textup{mot}}}. 
\end{equation*}

This spectral sequence is not what we need. We need a spectral sequence of the form
\begin{equation*}
\Ext_{\textup{MU}^{\textup{mot}}_{*,*}\textup{MU}^{\textup{mot}}/\tau}(\textup{MU}^{\textup{mot}}_{*,*}X, \textup{MU}^{\textup{mot}}_{*,*}Y) \Longrightarrow \left[ X, Y^{\smas}_{\textup{MU}^{\textup{mot}}} \right]_{\widehat{S^{0,0}}/\tau},
\end{equation*}
for the following two reasons. $Y^{\smas}_{\textup{MU}^{\textup{mot}}}$ has the natural structure of being a $\widehat{S^{0,0}}/\tau$-module. (See property (3) after Definition~\ref{d71} for this fact.)

First, we need a spectral sequence computing homotopy classes of maps in the category $\widehat{S^{0,0}}/\tau\text{-}\mathbf{Mod}_{\textup{cell}}$, instead of homotopy classes of maps between the underlying motivic spectra.

Second, we need the first variable $X$ to be a general cellular $\widehat{S^{0,0}}/\tau$-module than just the unit object $\widehat{S^{0,0}}/\tau$. Classically, when the first variable $X$ is the sphere spectrum, we can use the standard cosimplicial cobar Adams-Novikov resolution for the second variable $Y \in \widehat{S^{0,0}}/\tau\text{-}\mathbf{Mod}$ to set up this spectral sequence. This is done in \cite[Chapter 2]{Ravenel} classically and in \cite[Section 8]{DuggerIsaksenMASS} and \cite{HKOrem} motivically. Such a resolution induces a resolution of $\textup{MU}^{\textup{mot}}_{*,*}Y$ by relative injective comodules. It computes the $E_2$-page as an $\Ext$-group only when the first variable $\textup{MU}^{\textup{mot}}_{*,*}X$ is a projective module over $\textup{MU}^{\textup{mot}}_{*,*}/\tau$ \cite[Corollary A1.2.12]{Ravenel}. Since our first variable $X$ is arbitrary, the $E_2$-page in general does not have a description as a relative $\Ext$-group.


Instead of using the canonical Adams-Novikov tower that produces a resolution of $\textup{MU}^{\textup{mot}}_{*,*}Y$ by relative injectives, we construct an $\mathbf{absolute}$ Adams-Novikov tower that produces a resolution of $\textup{MU}^{\textup{mot}}_{*,*}Y$ by $\mathbf{absolute}$ injectives. The first step is Lemma \ref{lemma:injrestop1} and Lemma \ref{lemma:injrestop2}, where we produce enough $\widehat{S^{0,0}}/\tau$-modules whose $\textup{MU}^{\textup{mot}}$-homology are injective comodules.
The second step is Lemma \ref{lemma:injresalg}, where we show that we can algebraically resolve comodules in $\textup{MU}^{\textup{mot}}_{*,*}\textup{MU}^{\textup{mot}}/\tau\text{-}\mathbf{Comod}$ by these injective comodules. The third step is Proposition \ref{prop:ANtower}, where we topologically realize the algebraic construction to produce an absolute Adams-Novikov tower in the category $\widehat{S^{0,0}}/\tau\text{-}\mathbf{Mod}_{\textup{cell}}^b$. Finally, in Theorem \ref{thm:ANss}, we construct the absolute Adams-Novikov spectral sequence and analyze its convergence. 
{\color{red}
}

\subsection{The absolute Adams-Novikov tower}
We construct the absolute Adams-Novikov tower in this subsection.

Recall that the forgetful functor from the abelian category $\textup{MU}^{\textup{mot}}_{*,*}\textup{MU}^{\textup{mot}}/\tau\text{-}\mathbf{Comod}$ to the category of abelian groups reflects monomorphisms, epimorphisms and exactness.

The following Lemma \ref{lemma:injrestop1} is a consequence of Proposition \ref{prop:equivMGLheart} and the homology version of Dugger-Isaksen's the universal coefficient spectral sequence \cite[Propositions 7.7 and 7.10]{DuggerIsaksen}. 

\begin{lemma} \label{lemma:injrestop1}
For any injective module $N \in \textup{MU}^{\textup{mot}}_{*,*}/\tau\text{-}\mathbf{Mod}^0$, we have
\begin{enumerate}
\item 	$\textup{MU}^{\textup{mot}}_{*,*}\textup{MU}^{\textup{mot}}/\tau \otimes_{\textup{MU}^{\textup{mot}}_{*,*}/\tau} N$ is an injective $\textup{MU}^{\textup{mot}}_{*,*}\textup{MU}^{\textup{mot}}/\tau$-comodule,
\item there exists $I$ in $\textup{MU}^{\textup{mot}}/\tau\text{-}\mathbf{Mod}_{\textup{cell}}^\heartsuit$ such that
$$\piss I \cong N,$$
\item for any such an object $I$, 
$$\textup{MU}^{\textup{mot}}_{*,*} I \cong \textup{MU}^{\textup{mot}}_{*,*}\textup{MU}^{\textup{mot}}/\tau \otimes_{\textup{MU}^{\textup{mot}}_{*,*}/\tau} N.$$
\end{enumerate}

\end{lemma}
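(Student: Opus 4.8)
The plan is to handle the three assertions separately, in the order (2), (1), (3); only (3) requires real work. Assertion~(2) is immediate: by Proposition~\ref{prop:equivMGLheart} the functor $\piss\colon \textup{MU}^{\textup{mot}}/\tau\text{-}\mathbf{Mod}_{\textup{cell}}^\heartsuit \to \textup{MU}^{\textup{mot}}_{*,*}/\tau\text{-}\mathbf{Mod}^0$ is an equivalence of categories, so since the injective module $N$ is an object of the target, essential surjectivity produces an $I$ in $\textup{MU}^{\textup{mot}}/\tau\text{-}\mathbf{Mod}_{\textup{cell}}^\heartsuit$ with $\piss I \cong N$, which is exactly (2). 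For assertion~(1), restrict the adjunction~\eqref{eq:adjModComod} to objects concentrated in Chow-Novikov degree $0$: the forgetful functor $U$ and the cotensor-up functor $\textup{MU}^{\textup{mot}}_{*,*}\textup{MU}^{\textup{mot}}/\tau \otimes_{\textup{MU}^{\textup{mot}}_{*,*}/\tau}-$ still form an adjunction between $\textup{MU}^{\textup{mot}}_{*,*}\textup{MU}^{\textup{mot}}/\tau\text{-}\mathbf{Comod}^0$ and $\textup{MU}^{\textup{mot}}_{*,*}/\tau\text{-}\mathbf{Mod}^0$ (both functors, and the unit and counit, respect the Chow-Novikov degree $0$ condition), with $U$ the left adjoint. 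Since $\textup{MU}^{\textup{mot}}_{*,*}\textup{MU}^{\textup{mot}}/\tau$ is flat over $\textup{MU}^{\textup{mot}}_{*,*}/\tau$, comodule kernels and cokernels are computed on underlying modules, so $U$ is exact; a right adjoint of an exact functor preserves injective objects, whence $\textup{MU}^{\textup{mot}}_{*,*}\textup{MU}^{\textup{mot}}/\tau \otimes_{\textup{MU}^{\textup{mot}}_{*,*}/\tau} N$ is an injective object of $\textup{MU}^{\textup{mot}}_{*,*}\textup{MU}^{\textup{mot}}/\tau\text{-}\mathbf{Comod}^0$. (This is the familiar fact that cofree comodules on injective modules are injective.)

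For assertion~(3), fix any $I$ in $\textup{MU}^{\textup{mot}}/\tau\text{-}\mathbf{Mod}_{\textup{cell}}^\heartsuit$ with $\piss I \cong N$; the injectivity of $N$ plays no role here. Because $I$ is a $\textup{MU}^{\textup{mot}}/\tau$-module, the identity $\textup{MU}^{\textup{mot}}_{*,*}I = \piss(\textup{MU}^{\textup{mot}}\wedge_{\widehat{S^{0,0}}} I) \cong \piss(\textup{MU}^{\textup{mot}}/\tau \wedge_{\widehat{S^{0,0}}/\tau} I)$ recalled in the excerpt can be rewritten as $\textup{MU}^{\textup{mot}}_{*,*}I \cong \piss\bigl( (\textup{MU}^{\textup{mot}}/\tau \wedge_{\widehat{S^{0,0}}/\tau} \textup{MU}^{\textup{mot}}/\tau) \wedge_{\textup{MU}^{\textup{mot}}/\tau} I \bigr)$, where $\textup{MU}^{\textup{mot}}/\tau \wedge_{\widehat{S^{0,0}}/\tau} \textup{MU}^{\textup{mot}}/\tau$ is regarded as a $\textup{MU}^{\textup{mot}}/\tau$-module via its second smash factor. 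Both smash factors are $\textup{MU}^{\textup{mot}}/\tau$-cellular (the first because $\textup{MU}^{\textup{mot}}/\tau$ is $\widehat{S^{0,0}}/\tau$-cellular, the second because $I$ lies in the heart), so the homology version of Dugger-Isaksen's universal coefficient spectral sequence \cite[Propositions 7.7 and 7.10]{DuggerIsaksen} applies and takes the form
\begin{equation*}
\mathrm{Tor}^{\textup{MU}^{\textup{mot}}_{*,*}/\tau}_{*,*,*}\bigl( \textup{MU}^{\textup{mot}}_{*,*}\textup{MU}^{\textup{mot}}/\tau,\ \piss I \bigr) \Longrightarrow \textup{MU}^{\textup{mot}}_{*,*}I.
\end{equation*}
As recalled in the excerpt, $\textup{MU}^{\textup{mot}}_{*,*}\textup{MU}^{\textup{mot}}/\tau \cong \textup{MU}^{\textup{mot}}_{*,*}/\tau[b_1,b_2,\ldots]$ is free, hence flat, over $\textup{MU}^{\textup{mot}}_{*,*}/\tau$, so the higher $\mathrm{Tor}$ groups vanish, the spectral sequence collapses, and one obtains an isomorphism of $\textup{MU}^{\textup{mot}}_{*,*}/\tau$-modules $\textup{MU}^{\textup{mot}}_{*,*}I \cong \textup{MU}^{\textup{mot}}_{*,*}\textup{MU}^{\textup{mot}}/\tau \otimes_{\textup{MU}^{\textup{mot}}_{*,*}/\tau} N$. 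To promote this to an isomorphism of comodules I would note that the coaction on $\textup{MU}^{\textup{mot}}_{*,*}I$ is induced on homotopy by inserting the unit $\widehat{S^{0,0}}/\tau \to \textup{MU}^{\textup{mot}}/\tau$ in the middle of $\textup{MU}^{\textup{mot}}/\tau \wedge_{\widehat{S^{0,0}}/\tau} I$, and the same flat-K\"unneth identification applied to the triple smash matches this with the cofree coaction $\Delta \otimes \mathrm{id}$ on $\textup{MU}^{\textup{mot}}_{*,*}\textup{MU}^{\textup{mot}}/\tau \otimes_{\textup{MU}^{\textup{mot}}_{*,*}/\tau} N$.

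The main obstacle is assertion~(3): one has to state the homology version of the Dugger-Isaksen universal coefficient spectral sequence in exactly the right form, exploit flatness of $\textup{MU}^{\textup{mot}}_{*,*}\textup{MU}^{\textup{mot}}/\tau$ over $\textup{MU}^{\textup{mot}}_{*,*}/\tau$ to force its collapse, and --- the only genuinely delicate point --- verify that the resulting module isomorphism respects the comodule coactions. Assertions~(1) and~(2), by contrast, are immediate consequences of Proposition~\ref{prop:equivMGLheart} and the comodule-module adjunction~\eqref{eq:adjModComod}.
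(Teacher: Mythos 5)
Your proposal matches the paper's proof essentially step for step: part (2) from Proposition~\ref{prop:equivMGLheart}, part (1) from the module--comodule adjunction (the paper simply cites Ravenel Lemma A1.2.2, which encodes the same ``right adjoint of an exact functor preserves injectives'' argument), and part (3) by rewriting $\textup{MU}^{\textup{mot}}\wedge I$ as $(\textup{MU}^{\textup{mot}}/\tau\wedge_{\widehat{S^{0,0}}/\tau}\textup{MU}^{\textup{mot}}/\tau)\wedge_{\textup{MU}^{\textup{mot}}/\tau}I$ and collapsing the homology universal coefficient spectral sequence using freeness of $\textup{MU}^{\textup{mot}}_{*,*}\textup{MU}^{\textup{mot}}/\tau$. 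Your extra paragraph checking that the resulting module isomorphism respects coactions is a detail the paper leaves implicit, but otherwise this is the same proof.
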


\begin{proof}
Part (1) is straightforward (see \cite[Lemma A1.2.2]{Ravenel} for example). Part (2) follows directly from Proposition \ref{prop:equivMGLheart}.

For Part (3), we have the equivalences
\begin{align*}
\textup{MU}^{\textup{mot}} \wedge I & \simeq \textup{MU}^{\textup{mot}}/\tau \wedge_{\widehat{S^{0,0}}/\tau} I\\ & \simeq  \textup{MU}^{\textup{mot}}/\tau \wedge_{\widehat{S^{0,0}}/\tau} \left(  \textup{MU}^{\textup{mot}}/\tau \smas_{ \textup{MU}^{\textup{mot}}/\tau} I \right) \\ 
 & \simeq  \left(  \textup{MU}^{\textup{mot}}/\tau \wedge_{\widehat{S^{0,0}}/\tau}  \textup{MU}^{\textup{mot}}/\tau \right) \smas_{\textup{MU}^{\textup{mot}}/\tau} I.
\end{align*}
Since $\textup{MU}^{\textup{mot}}/\tau$ is $\widehat{S^{0,0}}/\tau$-cellular, the homotopy groups of the last term can be computed by the homology version of Dugger-Isaksen's universal coefficient spectral sequence \cite[Proposition 7.10]{DuggerIsaksen}
\begin{equation*}
\Tor_{s,t,w}^{\textup{MU}^{\textup{mot}}_{*,*}/\tau}\left( \textup{MU}^{\textup{mot}}_{*,*}\textup{MU}^{\textup{mot}}/\tau, \piss I \right) \Longrightarrow \pi_{t+s,w}\left( \textup{MU}^{\textup{mot}}/\tau \wedge_{\widehat{S^{0,0}}/\tau} \textup{MU}^{\textup{mot}}/\tau \smas_{\textup{MU}^{\textup{mot}}/\tau} I \right)
\end{equation*}
in the category $\textup{MU}^{\textup{mot}}/\tau\text{-}\mathbf{Mod}_{\textup{cell}}$.

Since $\textup{MU}^{\textup{mot}}_{*,*}\textup{MU}^{\textup{mot}}/\tau$ is free, 
the spectral sequence is concentrated on the line $s=0$ and collapses at the $E_2$-page. This proves Part (3).

\end{proof}

\begin{remark}
The statement of Part (3) does not require that $N$ is injective.	
\end{remark}

Lemma \ref{lemma:injrestop1} is our source of motivic $\widehat{S^{0,0}}/\tau$-modules whose $\textup{MU}^{\textup{mot}}/\tau$-homology is injective as a comodule.

\begin{lemma} \label{lemma:injrestop2}
Suppose that $I$ is an object in $\textup{MU}^{\textup{mot}}/\tau\text{-}\mathbf{Mod}_{\textup{cell}}^\heartsuit$ such that $\pi_{*,*}I$ is an injective $\textup{MU}^{\textup{mot}}_{*,*}/\tau$-module.


Then for any $X \in \widehat{S^{0,0}}/\tau\text{-}\mathbf{Mod}_{\textup{cell}}^b$, we have 
$$
[X,  I]_{\widehat{S^{0,0}}/\tau} \cong \Hom_{\textup{MU}^{\textup{mot}}_{*,*}\textup{MU}^{\textup{mot}}/\tau}(\textup{MU}^{\textup{mot}}_{*,*}X, \textup{MU}^{\textup{mot}}_{*,*}I).
$$
\end{lemma}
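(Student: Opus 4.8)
The plan is to chain together three isomorphisms: the free--forgetful adjunction between $\widehat{S^{0,0}}/\tau$-modules and $\textup{MU}^{\textup{mot}}/\tau$-modules, the universal coefficient spectral sequence of Theorem~\ref{thm:UCTMGL}, and the module--comodule adjunction~\eqref{eq:adjModComod}. First I would use that $I$ is by hypothesis an object of $\textup{MU}^{\textup{mot}}/\tau\text{-}\mathbf{Mod}_{\textup{cell}}$, so the adjunction in~\eqref{eq:adjSCellCtCellMGLCell} whose left adjoint is $\textup{MU}^{\textup{mot}}/\tau \wedge_{\widehat{S^{0,0}}/\tau} -$ furnishes a natural isomorphism
$$[X, I]_{\widehat{S^{0,0}}/\tau} \;\cong\; \bigl[\, \textup{MU}^{\textup{mot}}/\tau \wedge_{\widehat{S^{0,0}}/\tau} X, \ I \,\bigr]_{\textup{MU}^{\textup{mot}}/\tau}.$$

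Next I would apply Theorem~\ref{thm:UCTMGL} to the pair $\bigl(\textup{MU}^{\textup{mot}}/\tau \wedge_{\widehat{S^{0,0}}/\tau} X,\ I\bigr)$ inside $\textup{MU}^{\textup{mot}}/\tau\text{-}\mathbf{Mod}_{\textup{cell}}$ (the source is cellular, being free on the cellular object $X$). Using $\piss\bigl(\textup{MU}^{\textup{mot}}/\tau \wedge_{\widehat{S^{0,0}}/\tau} X\bigr) = \textup{MU}^{\textup{mot}}_{*,*}X$, its $E_2$-page is $\Ext^{s,t,w}_{\textup{MU}^{\textup{mot}}_{*,*}/\tau}(\textup{MU}^{\textup{mot}}_{*,*}X, \piss I)$. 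Since $\textup{MU}^{\textup{mot}}_{*,*}X$ is concentrated in bounded Chow-Novikov degrees (as $X \in \widehat{S^{0,0}}/\tau\text{-}\mathbf{Mod}_{\textup{cell}}^b$) and $\piss I$ is concentrated in Chow-Novikov degree $0$ (as $I \in \textup{MU}^{\textup{mot}}/\tau\text{-}\mathbf{Mod}_{\textup{cell}}^\heartsuit$), the spectral sequence converges strongly; and since $\piss I$ is injective over $\textup{MU}^{\textup{mot}}_{*,*}/\tau$, the $E_2$-page is concentrated on the line $s = 0$, so it collapses there. Reading off the contribution in bidegree $(0,0)$ gives
$$\bigl[\, \textup{MU}^{\textup{mot}}/\tau \wedge_{\widehat{S^{0,0}}/\tau} X, \ I \,\bigr]_{\textup{MU}^{\textup{mot}}/\tau} \;\cong\; \Hom_{\textup{MU}^{\textup{mot}}_{*,*}/\tau}\bigl(\textup{MU}^{\textup{mot}}_{*,*}X, \piss I\bigr).$$

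Finally I would invoke Lemma~\ref{lemma:injrestop1}(3) with $N = \piss I$, which identifies $\textup{MU}^{\textup{mot}}_{*,*}I$ with the cofree comodule $\textup{MU}^{\textup{mot}}_{*,*}\textup{MU}^{\textup{mot}}/\tau \otimes_{\textup{MU}^{\textup{mot}}_{*,*}/\tau} \piss I$; then the adjunction~\eqref{eq:adjModComod}, in which the forgetful functor is the \emph{left} adjoint and the cofree functor $\textup{MU}^{\textup{mot}}_{*,*}\textup{MU}^{\textup{mot}}/\tau \otimes_{\textup{MU}^{\textup{mot}}_{*,*}/\tau} -$ is the \emph{right} adjoint, yields $\Hom_{\textup{MU}^{\textup{mot}}_{*,*}\textup{MU}^{\textup{mot}}/\tau}(\textup{MU}^{\textup{mot}}_{*,*}X, \textup{MU}^{\textup{mot}}_{*,*}I) \cong \Hom_{\textup{MU}^{\textup{mot}}_{*,*}/\tau}(\textup{MU}^{\textup{mot}}_{*,*}X, \piss I)$. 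Composing the three displayed isomorphisms proves the lemma. I expect the only delicate point to be checking that the composite isomorphism is the map induced by applying the functor $\textup{MU}^{\textup{mot}}_{*,*}$: this is a diagram chase through the units and counits of the three adjunctions, together with the observation that the comodule structure on $\textup{MU}^{\textup{mot}}_{*,*}X$ arising topologically is precisely the one that enters the algebraic module--comodule adjunction. Everything else is formal.
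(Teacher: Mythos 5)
Your proof is essentially identical to the paper's: both chain the free--forgetful adjunction \eqref{eq:adjSCellCtCellMGLCell}, the collapse of the universal coefficient spectral sequence of Theorem~\ref{thm:UCTMGL} on the line $s=0$ (because $\pi_{*,*}I$ is injective over $\textup{MU}^{\textup{mot}}_{*,*}/\tau$), and Lemma~\ref{lemma:injrestop1}(3) together with the module--comodule adjunction \eqref{eq:adjModComod}. The only differences are cosmetic: you spell out the degree/boundedness reasons for strong convergence and flag the naturality check (that the composite agrees with the map induced by $\textup{MU}^{\textup{mot}}_{*,*}$), both of which the paper leaves implicit.
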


\begin{proof}
The lemma follows from the following isomorphisms
\begin{align*}
[X,  I]_{\widehat{S^{0,0}}/\tau} & \cong [\textup{MU}^{\textup{mot}}/\tau \wedge_{\widehat{S^{0,0}}/\tau} X,  I]_{\textup{MU}^{\textup{mot}}/\tau}\\
& \cong \Hom_{\textup{MU}^{\textup{mot}}_{*,*}/\tau}(\textup{MU}^{\textup{mot}}_{*,*}X, \piss I) \\ 
& \cong \Hom_{\textup{MU}^{\textup{mot}}_{*,*}/\tau}(\textup{MU}^{\textup{mot}}_{*,*}X, N)\\
& \cong \Hom_{\textup{MU}^{\textup{mot}}_{*,*}\textup{MU}^{\textup{mot}}/\tau}(\textup{MU}^{\textup{mot}}_{*,*}X, \textup{MU}^{\textup{mot}}_{*,*}\textup{MU}^{\textup{mot}}/\tau \otimes_{\textup{MU}^{\textup{mot}}_{*,*}/\tau} N)\\
& \cong \Hom_{\textup{MU}^{\textup{mot}}_{*,*}\textup{MU}^{\textup{mot}}/\tau}(\textup{MU}^{\textup{mot}}_{*,*}X, \textup{MU}^{\textup{mot}}_{*,*}I).
\end{align*}
In fact, the first isomorphism follows from the adjunction \eqref{eq:adjSCellCtCellMGLCell} between $\widehat{S^{0,0}}/\tau$-modules and $\textup{MU}^{\textup{mot}}/\tau$-modules. The third and last isomorphisms follow from Lemma \ref{lemma:injrestop1}. The fourth isomorphism follows from a change-of-ring isomorphism. It remains to show the second isomorphism.

Since both $I$ and $\textup{MU}^{\textup{mot}}/\tau \wedge_{\widehat{S^{0,0}}/\tau} X$ belong to $\textup{MU}^{\textup{mot}}/\tau\text{-}\mathbf{Mod}_{\textup{cell}}$,
the set of homotopy classes of maps
$$[\textup{MU}^{\textup{mot}}/\tau \wedge_{\widehat{S^{0,0}}/\tau} X, I]_{\textup{MU}^{\textup{mot}}/\tau}$$
can be computed by the universal coefficient spectral sequence of Theorem \ref{thm:UCTMGL}
\begin{equation*}
\Ext_{\textup{MU}^{\textup{mot}}_{*,*}/\tau}^{s,t,w}( \textup{MU}^{\textup{mot}}_{*,*} X, \piss I) \Longrightarrow [\Sigma^{t-s,w} \textup{MU}^{\textup{mot}}/\tau \wedge_{\widehat{S^{0,0}}/\tau} X, I]_{\textup{MU}^{\textup{mot}}/\tau}.
\end{equation*}

Since $\piss I \cong N$ is an injective $\textup{MU}^{\textup{mot}}_{*,*}/\tau$-module, the spectral sequence is concentrated on the line $s=0$ and collapses at the $E_2$-page. This gives the second isomorphism.
\end{proof}


\begin{lemma} \label{lemma:injresalg}
For any $M \in \textup{MU}^{\textup{mot}}_{*,*}\textup{MU}^{\textup{mot}}/\tau\text{-}\mathbf{Comod}$ that is concentrated in Chow-Novikov degree $k$, there exists a monomorphism 
\begin{displaymath}
	\xymatrix{
	M \ar@{^{(}->}[rr] & & \textup{MU}^{\textup{mot}}_{*,*}\textup{MU}^{\textup{mot}}/\tau \otimes_{\textup{MU}^{\textup{mot}}_{*,*}/\tau} N,
	}
\end{displaymath}
where $N$ is injective in $\textup{MU}^{\textup{mot}}_{*,*}/\tau\text{-}\mathbf{Mod}$ and is concentrated in Chow-Novikov degree $k$.
\end{lemma}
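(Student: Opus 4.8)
The plan is to reduce to standard homological algebra of comodules over the flat Hopf algebroid $(\textup{MU}^{\textup{mot}}_{*,*}/\tau,\ \textup{MU}^{\textup{mot}}_{*,*}\textup{MU}^{\textup{mot}}/\tau)$, using crucially that both terms are concentrated in Chow-Novikov degree $0$. Write $A = \textup{MU}^{\textup{mot}}_{*,*}/\tau$ and $\Gamma = \textup{MU}^{\textup{mot}}_{*,*}\textup{MU}^{\textup{mot}}/\tau$, and let $U$ be the forgetful functor of the adjunction \eqref{eq:adjModComod}, with right adjoint the cofree functor $\Gamma \otimes_A -$.

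The first step is to choose the injective module $N$ in the correct Chow-Novikov degree. The category of bigraded $A$-modules is a Grothendieck abelian category, so it has enough injectives; embed $U(M)$ into an injective $A$-module $J$. Because $A$ is concentrated in Chow-Novikov degree $0$, multiplication by an element of $A$ preserves Chow-Novikov degree, so every $A$-module splits as the direct sum of its Chow-Novikov degree pieces; in particular $J = \bigoplus_j J^{(j)}$ with each summand $J^{(j)}$ injective. Since $M$, hence $U(M)$, is concentrated in Chow-Novikov degree $k$, the monomorphism $U(M)\hookrightarrow J$ lands in $N := J^{(k)}$, which is injective in $A\text{-}\mathbf{Mod}$ and concentrated in Chow-Novikov degree $k$, as required.

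The second step assembles the monomorphism of comodules. The coaction $\psi_M : M \to \Gamma \otimes_A U(M)$ is the unit of the adjunction \eqref{eq:adjModComod}, hence a map of $\Gamma$-comodules, and it is a split monomorphism of $A$-modules --- split by the counit $\epsilon \otimes_A U(M)$ via the comodule counit axiom --- so in particular a monomorphism. Applying the exact functor $\Gamma \otimes_A -$ (exact since $\Gamma \cong A[b_1, b_2, \dots]$ is free, hence flat, over $A$, by the computation of $\textup{MU}^{\textup{mot}}_{*,*}\textup{MU}^{\textup{mot}}/\tau$ recalled above) to the monomorphism $U(M) \hookrightarrow N$ gives a comodule monomorphism $\Gamma \otimes_A U(M) \hookrightarrow \Gamma \otimes_A N$; since $\Gamma$ lives in Chow-Novikov degree $0$ and $N$ in degree $k$, so does the target. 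The composite
\[
M \xrightarrow{\ \psi_M\ } \Gamma \otimes_A U(M) \hookrightarrow \Gamma \otimes_A N
\]
is then the desired monomorphism into $\textup{MU}^{\textup{mot}}_{*,*}\textup{MU}^{\textup{mot}}/\tau \otimes_{\textup{MU}^{\textup{mot}}_{*,*}/\tau} N$. (By the Chow-Novikov degree $k$ version of Lemma \ref{lemma:injrestop1}(1), proved by the same argument as there, cf.\ \cite[Lemma A1.2.2]{Ravenel}, the target is moreover injective as a $\Gamma$-comodule, which is what will be used when we build injective resolutions in the next step.)

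I do not expect a genuine obstacle here: all the moving parts are standard facts about comodules over a flat Hopf algebroid, and the only thing requiring care is the bookkeeping of the Chow-Novikov grading, which works out automatically because both $A$ and $\Gamma$ sit in Chow-Novikov degree $0$.
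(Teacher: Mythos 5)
Your proof is correct and takes essentially the same approach as the paper's: embed the underlying module into an injective, observe that the injective splits by Chow-Novikov degree so the degree-$k$ summand suffices, and then pass to the induced comodule map into the cofree comodule $\Gamma\otimes_A N$. The paper's proof is terser (it simply states that the induced comodule map is a monomorphism); you supply the justification — factoring through the unit $\psi_M$, which is a split mono of $A$-modules by the counit axiom, and applying the exact functor $\Gamma\otimes_A(-)$, exact because $\Gamma$ is free over $A$ — but this is precisely the standard argument the paper is invoking implicitly.
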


\begin{proof}

Since there are enough injective objects in the category $\textup{MU}^{\textup{mot}}_{*,*}/\tau\text{-}\mathbf{Mod}$, we may choose an embedding $M \hookrightarrow N$ into an injective object in the category $\textup{MU}^{\textup{mot}}_{*,*}/\tau\text{-}\mathbf{Mod}$. Then the induced comodule map
\begin{displaymath}
	\xymatrix{
	M \ar@{^{(}->}[rr] & & \textup{MU}^{\textup{mot}}_{*,*}\textup{MU}^{\textup{mot}}/\tau \otimes_{\textup{MU}^{\textup{mot}}_{*,*}/\tau} N,
	}
\end{displaymath}
is also a monomorphism.
\end{proof}


\begin{prop}\label{prop:ANtower}
For any $Y \in   \widehat{S^{0,0}}/\tau\text{-}\mathbf{Mod}_{\textup{cell}}^b$, there exists a tower of the following form 
\begin{displaymath}
	\xymatrix{
	Y \ar@{=}[r] & Y_0 \ar[d]  & Y_1 \ar[d] \ar[l] & Y_2 \ar[d] \ar[l] &  \cdots \ar[l] \\
	& I_0 & I_1  &   I_2  &
	}
\end{displaymath} 
in the category $\widehat{S^{0,0}}/\tau\text{-}\mathbf{Mod}_{\textup{cell}}^b$, such that
\begin{enumerate}
\item each map $Y_{s} \lto Y_{s-1}$ induces the zero homomorphism in $\textup{MU}^{\textup{mot}}$-homology,
\item each cofiber $I_s$ is a finite product of suspensions of objects $I$ in $\textup{MU}^{\textup{mot}}/\tau\text{-}\mathbf{Mod}_{\textup{cell}}^\heartsuit$ such that $\pi_{*,*}I$ is an injective $\textup{MU}^{\textup{mot}}_{*,*}/\tau$-module.

\end{enumerate}
We call such a tower an $\mathbf{absolute}$ Adams-Novikov tower.

Moreover, any map $f \colon X \lto Y$ in $\widehat{S^{0,0}}/\tau\text{-}\mathbf{Mod}_{\textup{cell}}^b$ can be lifted to a map of absolute Adams-Novikov towers.
\end{prop}

\begin{proof}
Suppose that $\textup{MU}^{\textup{mot}}_{*,*}Y$ is concentrated in Chow-Novikov degrees $[a,  b]$, namely
\begin{equation*}
\textup{MU}^{\textup{mot}}_{*,*}Y \cong \bigoplus_{k = a}^b \bigoplus_{l =-\infty}^{+\infty} \textup{MU}^{\textup{mot}}_{2l+k, l} Y.
\end{equation*}
By Lemma \ref{lemma:injresalg}, for every $k \in [a, b]$, there exists a monomorphism
\begin{equation*}
\bigoplus_{l = -\infty}^{+\infty} \textup{MU}^{\textup{mot}}_{2l+k, l}( Y) \cong \textup{MU}^{\textup{mot}}_{*, *}( \Sigma^{-k, 0} Y)^{=0} \inj \textup{MU}^{\textup{mot}}_{*,*}\textup{MU}^{\textup{mot}}/\tau \otimes_{\textup{MU}^{\textup{mot}}_{*,*}/\tau} N_{0, k}
\end{equation*}
where $N_{0,k}$ is injective module that is concentrated in Chow-Novikov degree $0$. By Lemma \ref{lemma:injrestop1}, there exists a spectrum $I_{0,k} \in \textup{MU}^{\textup{mot}}/\tau\text{-}\mathbf{Mod}_{\textup{cell}}^\heartsuit$ such that
$$\piss I_{0, k} \cong N_{0,k},$$
and that 
$$\textup{MU}^{\textup{mot}}_{*,*} I_{0, k} \cong \textup{MU}^{\textup{mot}}_{*,*}\textup{MU}^{\textup{mot}}/\tau \otimes_{\textup{MU}^{\textup{mot}}_{*,*}/\tau} N_{0,k}.$$
By Lemma \ref{lemma:injrestop2}, we have that 
$$
[\Sigma^{-k, 0} Y,  I_{0, k}]_{\widehat{S^{0,0}}/\tau} 
\cong \Hom_{\textup{MU}^{\textup{mot}}_{*,*}\textup{MU}^{\textup{mot}}/\tau}(\textup{MU}^{\textup{mot}}_{*,*} (\Sigma^{-k, 0} Y), \textup{MU}^{\textup{mot}}_{*,*}\textup{MU}^{\textup{mot}}/\tau \otimes_{\textup{MU}^{\textup{mot}}_{*,*}/\tau} N_{0, k})$$ $$
\cong \Hom_{\textup{MU}^{\textup{mot}}_{*,*}\textup{MU}^{\textup{mot}}/\tau}(\textup{MU}^{\textup{mot}}_{*, *}( \Sigma^{-k, 0} Y)^{=0}, \textup{MU}^{\textup{mot}}_{*,*}\textup{MU}^{\textup{mot}}/\tau \otimes_{\textup{MU}^{\textup{mot}}_{*,*}/\tau} N_{0, k}).
$$
The second isomorphism follows from the fact that $N_{0, k}$ is concentrated in Chow-Novikov degree $0$. Therefore, the algebraic map of comodules
\begin{displaymath}
	\xymatrix{
 \textup{MU}^{\textup{mot}}_{\ast,\ast}( \Sigma^{-k,0} Y) \ar@{->>}[r] & \textup{MU}^{\textup{mot}}_{*, *}( \Sigma^{-k, 0} Y)^{=0} \ar@{^{(}->}[r] & \textup{MU}^{\textup{mot}}_{*,*}\textup{MU}^{\textup{mot}}/\tau \otimes_{\textup{MU}^{\textup{mot}}_{*,*}/\tau} N_{0,k},
	}
\end{displaymath}
where the first map is the project map to the Chow-Novikov degree $0$ part, can be realized as a $\widehat{S^{0,0}}/\tau$-linear map 
$$\Sigma^{-k,0} Y \longrightarrow I_{0,k}.$$
Combine these maps for all $k\in[a, b]$, we obtain a map
$$Y \longrightarrow \prod_{k=a}^b \Sigma^{k,0} I_{0,k}.$$
This map induces a monomorphism in $\textup{MU}^{\textup{mot}}$-homology.

Denote the finite product by 
$$I_0 \coloneqq \prod_{k=a}^b \Sigma^{k,0} I_{0,k},$$
and the fiber of the map $Y \rightarrow I_0$ by $Y_1$, as in
\begin{displaymath}
	\xymatrix{
 Y \ar[d]  & Y_1  \ar[l]  \\
	 I_0. &     
	}
\end{displaymath} 
By the associated long exact sequence in $\textup{MU}^{\textup{mot}}$-homology, the map $Y_1 \lto Y$ induces the zero map in $\textup{MU}^{\textup{mot}}$-homology, and $\textup{MU}^{\textup{mot}}_{*,*}Y_1$ is concentrated in Chow-Novikov degrees $[a- 1,b-1]$. So in particular we have 
$$Y_1 \in   \widehat{S^{0,0}}/\tau\text{-}\mathbf{Mod}_{\textup{cell}}^b.$$
We can repeat the procedure, producing an absolute Adams-Novikov tower
\begin{displaymath}
	\xymatrix{
 Y \ar[d]  & Y_1 \ar[d] \ar[l] & Y_2 \ar[d] \ar[l] &  \cdots \ar[l] \\
	 I_0 & I_1  &   I_2  &
	}
\end{displaymath} 
satisfying the desired properties.

We now prove the second claim of the theorem. For any $\widehat{S^{0,0}}/\tau$-linear map $f_0:X_0 \rightarrow Y_0$,
we may assume that $\textup{MU}^{\textup{mot}}_{*,*}X_0$ and $\textup{MU}^{\textup{mot}}_{*,*}Y_0$ are both concentrated in Chow-Novikov degrees $[a, b]$.
Denote the first step of their tower by
\begin{displaymath}
	\xymatrix{
 X_0 \ar[d] \ar[r]^{f_0}  & Y_0 \ar[d]  \\
	 I_0 & J_0,  
	 	}
\end{displaymath}
where $I_0$ and $J_0$ are the finite products of suspensions of objects that satisfy the conclusions of Lemma \ref{lemma:injrestop1}. Applying $\textup{MU}^{\textup{mot}}_{*,*}$, we have the following diagram of $\textup{MU}^{\textup{mot}}_{*,*}\textup{MU}^{\textup{mot}}/\tau$-comodules
\begin{displaymath}
	\xymatrix{
 \textup{MU}^{\textup{mot}}_{*,*}X_0 \ar@{^{(}->}[d] \ar[r]^{{f_0}_{*,*}}  & \textup{MU}^{\textup{mot}}_{*,*}Y_0 \ar@{^{(}->}[d]  \\
	 \textup{MU}^{\textup{mot}}_{*,*}I_0 \ar@{-->}[r]^{\phi} & \textup{MU}^{\textup{mot}}_{*,*}J_0.  
	 	}
\end{displaymath}
Here the existence of the homomorphism $\phi$ is due to the universal property of injective objects in the category $\textup{MU}^{\textup{mot}}_{*,*}\textup{MU}^{\textup{mot}}/\tau\text{-}\mathbf{Comod}$. 

We have
$$\textup{MU}^{\textup{mot}}_{*,*}I_0 = \textup{MU}^{\textup{mot}}_{*,*}(\prod_{k=a}^b \Sigma^{k,0} I_{0,k})= \prod_{k=a}^b \textup{MU}^{\textup{mot}}_{*,*}(\Sigma^{k,0} I_{0,k}),$$
$$\textup{MU}^{\textup{mot}}_{*,*}J_0 = \textup{MU}^{\textup{mot}}_{*,*}(\prod_{k=a}^b \Sigma^{k,0} J_{0,k})= \prod_{k=a}^b \textup{MU}^{\textup{mot}}_{*,*}(\Sigma^{k,0} J_{0,k}).$$
The Chow-Novikov degree $k$ parts of $\textup{MU}^{\textup{mot}}_{*,*}I_0$ and $\textup{MU}^{\textup{mot}}_{*,*}J_0$ are given by 
$$\textup{MU}^{\textup{mot}}_{*,*}(\Sigma^{k,0} I_{0,k}), \ \textup{MU}^{\textup{mot}}_{*,*}(\Sigma^{k,0} J_{0,k}).$$
Therefore, the homomorphism $\phi$ is given by the product of homomorphisms
$$\phi_k: \textup{MU}^{\textup{mot}}_{*,*}(\Sigma^{k,0} I_{0,k}) \longrightarrow \textup{MU}^{\textup{mot}}_{*,*}(\Sigma^{k,0} J_{0,k})$$
for each $k\in[a,b]$.

Since $J_{0,k}$ satisfies the conclusions of Lemma \ref{lemma:injrestop1}, we have that
\begin{align*}
[\Sigma^{k,0} I_{0,k},  \Sigma^{k,0} J_{0,k}]_{\widehat{S^{0,0}}/\tau} & \cong [I_{0,k},  J_{0,k}]_{\widehat{S^{0,0}}/\tau}\\
& \cong \Hom_{\textup{MU}^{\textup{mot}}_{*,*}\textup{MU}^{\textup{mot}}/\tau}(\textup{MU}^{\textup{mot}}_{*,*}I_{0,k}, \textup{MU}^{\textup{mot}}_{*,*}J_{0,k})\\
& \cong \Hom_{\textup{MU}^{\textup{mot}}_{*,*}\textup{MU}^{\textup{mot}}/\tau}(\textup{MU}^{\textup{mot}}_{*,*}(\Sigma^{k,0}I_{0,k}), \textup{MU}^{\textup{mot}}_{*,*}(\Sigma^{k,0}J_{0,k})),
\end{align*}
where the second isomorphism is given by Lemma \ref{lemma:injrestop2}. Therefore the homomorphism $\phi_k$ can be realized by a $\widehat{S^{0,0}}/\tau$-linear map 
$$g_{0,k}: \Sigma^{k,0}I_{0,k} \longrightarrow \Sigma^{k,0}J_{0,k}.$$
Taking the product of $g_{0,k}$ for all $k\in[a,b]$, we define a map $g_0: I_0 \rightarrow J_0$. Then $g_0$ realizes $\phi$, and we have the diagram
\begin{displaymath}
	\xymatrix{
 X_0 \ar[d] \ar[r]^{f_0}  & Y_0 \ar[d]  \\
	 I_0 \ar[r]^{g_0} & J_0. 
	 	}
\end{displaymath}
To see that the square commutes up to homotopy, we have
\begin{align*}
[X,  J_{0}]_{\widehat{S^{0,0}}/\tau} & \cong [X,  \prod_{k=a}^b \Sigma^{k,0} J_{0,k}]_{\widehat{S^{0,0}}/\tau}\\
& \cong \prod_{k=a}^b \Hom_{\textup{MU}^{\textup{mot}}_{*,*}\textup{MU}^{\textup{mot}}/\tau}(\textup{MU}^{\textup{mot}}_{*,*}X, \textup{MU}^{\textup{mot}}_{*,*}(\Sigma^{k,0}J_{0,k}))\\
& \cong \Hom_{\textup{MU}^{\textup{mot}}_{*,*}\textup{MU}^{\textup{mot}}/\tau}(\textup{MU}^{\textup{mot}}_{*,*}X, \textup{MU}^{\textup{mot}}_{*,*}J_0),
\end{align*}
where the second isomorphism is given by Lemma \ref{lemma:injrestop2}.
 
Therefore, the commutativity of this square follows from the commutativity of the corresponding square in $\textup{MU}^{\textup{mot}}$-homology. 

The commutative diagram in $\widehat{S^{0,0}}/\tau\text{-}\mathbf{Mod}_{\textup{cell}}^b$ induces a map $f_1:X_1\rightarrow Y_1$ between the fibers, so the following diagram commutes up to homotopy
\begin{equation*}
\begin{tikzpicture}
\matrix (m) [matrix of math nodes, row sep=2.5em, column sep=4em]
{ X   & X_1  & X_2 &  \cdots  \\
  I_0 & I_1  &     & \\
  Y   & Y_1  & Y_2 &  \cdots  \\
  J_0 & J_1.  &     & \\};
\path[thick, -stealth, font=\small]
(m-1-2) edge (m-1-1)
(m-1-3) edge (m-1-2)
(m-1-4) edge (m-1-3)
(m-1-1) edge (m-2-1)
(m-1-2) edge (m-2-2)

(m-3-2) edge (m-3-1)
(m-3-3) edge (m-3-2)
(m-3-4) edge (m-3-3)
(m-3-1) edge (m-4-1)
(m-3-2) edge (m-4-2)

(m-1-1) edge[bend right=45] node[left] {$ f_0 $} (m-3-1)
(m-2-1) edge[bend right=45] node[left] {$ g_0 $} (m-4-1);
\path[dashed, -stealth, font=\small]
(m-1-2) edge[bend right=45] node[left] {$ f_1 $} (m-3-2);
\end{tikzpicture}
\end{equation*}
Iterating this process produces the desired map of absolute Adams-Novikov towers.
\end{proof}

\subsection{The spectral sequence}

Every absolute Adams-Novikov tower gives rise to an absolute Adams-Novikov spectral sequence. In the following Theorem \ref{thm:ANss}, we identify the $E_2$-page of the spectral sequence and its abutment. We also show that it does not depend on the absolute Adams-Novikov tower, and converges strongly for objects with bounded Chow-Novikov degree.

\begin{thm} \label{thm:ANss}
For $X,Y \in \widehat{S^{0,0}}/\tau\text{-}\mathbf{Mod}_{\textup{cell}}^b$, there is an absolute Adams-Novikov spectral sequence
\begin{equation*}
E_2^{s,t,w} \cong \Ext^{s,t,w}_{\textup{MU}^{\textup{mot}}_{*,*}\textup{MU}^{\textup{mot}}/\tau}\left( \textup{MU}^{\textup{mot}}_{*,*}X, \textup{MU}^{\textup{mot}}_{*,*}Y\right) \Longrightarrow \left[\Sigma^{t-s,w} X,Y^{\smas}_{\textup{MU}^{\textup{mot}}} \right]_{\widehat{S^{0,0}}/\tau},
\end{equation*}
with differentials $$d_r \colon E_r^{s,t,w} \lto E_r^{s+r,t+r-1,w},$$
that does not depend on the absolute Adams-Novikov tower. Here $Y^{\smas}_{\textup{MU}^{\textup{mot}}}$ is the $\textup{MU}^{\textup{mot}}$-completion of $Y$. Moreover, this spectral sequence converges strongly and collapses at a finite page.
\end{thm}

\begin{proof}
The arguments for the existence of the absolute Adams-Novikov spectral sequence and its independence of the absolute Adams-Novikov tower are both standard. Under the hypotheses that both $\textup{MU}^{\textup{mot}}_{*,*}X$ and $\textup{MU}^{\textup{mot}}_{*,*}Y$ are concentrated in bounded Chow-Novikov degrees, the argument for the strongly convergence and collapse at a finite page is similar to the one given in the proof of Theorem \ref{thm:UCTMGL}: It follows from degree reasons.

To complete the proof, we only need to identify the abutment.

Let $Y/Y_s$ be the cofiber of the map $Y_s \rightarrow Y$ in the absolute Adams-Novikov tower, 
\begin{displaymath}
	\xymatrix{
 Y \ar[d]  & Y_1 \ar[d] \ar[l] & Y_2 \ar[d] \ar[l] &  \cdots \ar[l] \\
	 I_0 & I_1  &   I_2.  &
	}
\end{displaymath}
and define the limit in $\widehat{S^{0,0}}/\tau\text{-}\mathbf{Mod}$
$$\widehat{Y}=\textup{lim}(Y/Y_s).$$
The spectral sequence converges conditionally to
 $$[X, \widehat{Y}]_{\widehat{S^{0,0}}/\tau}.$$
See for example Sections~5 and 15 of \cite{Boardman} for a discussion of convergence issues of the Adams spectral sequence.

To identify it as $[X, Y^\wedge_{\textup{MU}^{\textup{mot}}}]_{\widehat{S^{0,0}}/\tau}$, since $X$ is $\widehat{S^{0,0}}/\tau$-cellular, we only need to show that $\widehat{Y}$ has the same homotopy groups as the $Y^\wedge_{\textup{MU}^{\textup{mot}}}$. 

Take $X = \widehat{S^{0,0}}/\tau$. Since $\textup{MU}^{\textup{mot}}_{*,*} \widehat{S^{0,0}}/\tau = \textup{MU}^{\textup{mot}}_{*,*}/\tau$ is free over itself, we can use the canonical $\textup{MU}^{\textup{mot}}/\tau$-Adams resolution \cite[Definition 2.2.10]{Ravenel} for $Y$ in this case. Now we compare the canonical $\textup{MU}^{\textup{mot}}/\tau$-based Adams-Novikov tower of $Y$ with the absolute Adams-Novikov tower of $Y$. 

As we did in the proof of Proposition \ref{prop:ANtower}, we have a map of towers from the canonical $\textup{MU}^{\textup{mot}}/\tau$-based one to the absolute one. The identity map on $Y$ induces a homomorphism from the canonical cobar resolution of $\textup{MU}^{\textup{mot}}_{*,*}Y$ to the absolute injective resolution of $\textup{MU}^{\textup{mot}}_{*,*}Y$, so in particular a homomorphism of relative injective resolutions.

This induces a homomorphism from the usual Adams-Novikov spectral sequence to the absolute Adams-Novikov spectral sequence, with an isomorphism on the $E_2$-page. It is therefore an isomorphism of spectral sequences and we have an isomorphism
$$\pi_{*,*}\widehat{Y} \stackrel{\cong}{\lto} \pi_{*,*}Y^{\smas}_{\textup{MU}^{\textup{mot}}}.$$
Since any cellular $\widehat{S^{0,0}}/\tau$-module $X$ can be written in terms of filtered colimits and cofibers of suspensions of $\widehat{S^{0,0}}/\tau$'s, there is an isomorphism
\begin{equation*}
[X,\widehat{Y}]_{\widehat{S^{0,0}}/\tau} \stackrel{\cong}{\ltoo} [X, Y^{\smas}_{\textup{MU}^{\textup{mot}}}]_{\widehat{S^{0,0}}/\tau}.
\end{equation*}
Therefore, the absolute Adams-Novikov spectral sequence computes $[X, Y^{\smas}_{\textup{MU}^{\textup{mot}}}]_{\widehat{S^{0,0}}/\tau}$.
\end{proof}

When $Y$ is harmonic, the isomorphism $\pi_{*,*}\widehat{Y} \stackrel{\cong}{\lto} \pi_{*,*}Y^{\smas}_{\textup{MU}^{\textup{mot}}}$ gives the following corollary.

\begin{cor} \label{cor:ANss}
For any $X,Y \in \widehat{S^{0,0}}/\tau\text{-}\mathbf{Mod}_{\textup{harm}}^b$, there is an absolute Adams-Novikov spectral sequence
\begin{equation*}
E_2^{s,t,w}=\Ext^{s,t,w}_{\textup{MU}^{\textup{mot}}_{*,*}\textup{MU}^{\textup{mot}}/\tau}\left(\textup{MU}^{\textup{mot}}_{*,*}X, \textup{MU}^{\textup{mot}}_{*,*}Y \right) \Longrightarrow \left[\Sigma^{t-s,w} X,Y \right]_{\widehat{S^{0,0}}/\tau},
\end{equation*}
with differentials 
$$d_r \colon E_r^{s,t,w} \lto E_r^{s+r,t+r-1,w},$$
that converges strongly and collapses at a finite page.
\end{cor}

\begin{remark}

The above arguments can be applied to more general situations. However, this construction of an absolute Adams-Novikov spectral sequence depends on realizability of categorical injectives, so the range of situations to which it applies may be rather limited.

In the case of the classical stable homotopy category, for spectra $X$ and $Y$, there is a conditionally convergent spectral sequence
$$ \Ext_{\textup{MU}_*\textup{MU}}^{s,t}(\textup{MU}_*X,\textup{MU}_*Y) \Longrightarrow [\Sigma^{t-s}X,Y^\wedge_{\textup{MU}}],$$
where $\textup{MU}_*X$ does \emph{not} have to be projective over $\textup{MU}_*$. 
We will discuss this case in a general framework in future work.
\end{remark}

\subsection{Proofs of Lemma \ref{lemma:pisscompclosedunderubfilcolim}, Corollary \ref{cor:Cttstruct1} and Corollary \ref{cor:L45}}

We give the proofs of Lemma \ref{lemma:pisscompclosedunderubfilcolim}, Corollary \ref{cor:Cttstruct1} and Corollary \ref{cor:L45} in this section.


Corollary \ref{cor:Cttstruct1} states that if $X \in \widehat{S^{0,0}}/\tau\text{-}\mathbf{Mod}_{\textup{harm}}^{b,\geq0}$ and $Y \in \widehat{S^{0,0}}/\tau\text{-}\mathbf{Mod}_{\textup{harm}}^{b,\leq0}$, then the abelian group of homotopy classes of degree $(0,0)$ maps can be computed algebraically by the isomorphism
\begin{equation*}
[X,Y]_{\widehat{S^{0,0}}/\tau} \longrightarrow \Hom_{\textup{MU}^{\textup{mot}}_{*,*}\textup{MU}^{\textup{mot}}/\tau}( \textup{MU}^{\textup{mot}}_{*,*} X,\textup{MU}^{\textup{mot}}_{*,*} Y )
\end{equation*}
that is induced by applying $\textup{MU}^\textup{mot}_{*,*}$.

\begin{proof}[Proof of Corollary \ref{cor:Cttstruct1}]
The proof is similar to the one of Corollary \ref{C47}.
 
Consider the the $E_2$-page of the absolute Adams-Novikov spectral sequence, the tri-degrees that converge to the bidegree $(0, 0)$ are of the form $(t, t, 0)$ for $t \geq 0$, i.e., the parts $E_2^{s, t, w} = E_2^{t, t, 0}$.

By the proof of Theorem \ref{thm:ANss}, the $t$-degrees of all possible nonzero elements in the $E_1$-page and therefore $E_2$-page satisfy $t \leq d-a + 2w = d-a$. Since $\textup{MU}^{\textup{mot}}_{*,*}X$ and $\textup{MU}^{\textup{mot}}_{*,*}Y$ are concentrated in nonnegative and nonpositive bounded Chow-Novikov degrees, we have $d=a=0$. Therefore, we have $t \leq 0$.

Combining both facts, we have established that the only possible nonzero elements in the $E_2$-page that converge to the bidegree $(0, 0)$ are in 
$$E_2^{0,0,0} = \Hom_{\textup{MU}^{\textup{mot}}_{*,*}\textup{MU}^{\textup{mot}}/\tau}( \textup{MU}^{\textup{mot}}_{*,*} X,\textup{MU}^{\textup{mot}}_{*,*} Y ).$$
To show that all elements in $E_2^{0,0,0}$ survive in the spectral sequence, note that they are not targets of any nonzero differentials since they are in $s$-degree $0$. Second, all $d_r$-differentials for $r \geq 2$ increase the $t$-degree. Since the $t$-degrees of all nonzero elements are non-positive, the elements in $E_2^{0,0,0}$ do not support nonzero differentials. There are no hidden extensions due to degree reasons. This completes the proof.
\end{proof}

Corollary \ref{cor:L45} states that given $X, Y \in \widehat{S^{0,0}}/\tau\text{-}\mathbf{Mod}_{{\textup{harm}}}^\heartsuit$, for any bidegree $(t,w)$, there is an isomorphism
\begin{equation*}
[\Sigma^{t,w} X,Y]_{\widehat{S^{0,0}}/\tau} \cong \Ext^{2w -t, 2w, w}_{\textup{MU}^{\textup{mot}}_{*,*}\textup{MU}^{\textup{mot}}/\tau}(\textup{MU}^{\textup{mot}}_{*,*}X, \textup{MU}^{\textup{mot}}_{*,*}Y).
\end{equation*}

\begin{proof}[Proof of Corollary \ref{cor:L45}]
Consider the the $E_2$-page of the absolute Adams-Novikov spectral sequence. Since both $\textup{MU}^{\textup{mot}}_{*,*}X$ and $\textup{MU}^{\textup{mot}}_{*,*}Y$ are concentrated in Chow-Novikov degree 0, the $E_2$-page
$$E_2^{s, t, w} = \Ext^{s,t,w}_{\textup{MU}^{\textup{mot}}_{*,*}\textup{MU}^{\textup{mot}}/\tau}\left( \textup{MU}^{\textup{mot}}_{*,*}X, \textup{MU}^{\textup{mot}}_{*,*}Y \right)$$
is concentrated in degrees $t=2w$. Since all differentials preserves the motivic weights $w$, this spectral sequence collapses at the $E_2$-page. There are no hidden extensions due to degree reasons. Therefore, we have the isomorphism
\begin{equation*}
[\Sigma^{t,w} X,Y]_{\widehat{S^{0,0}}/\tau} \cong \Ext^{2w -t, 2w, w}_{\textup{MU}^{\textup{mot}}_{*,*}\textup{MU}^{\textup{mot}}/\tau}(\textup{MU}^{\textup{mot}}_{*,*}X, \textup{MU}^{\textup{mot}}_{*,*}Y).
\end{equation*}
\end{proof}

We now prove Lemma \ref{lemma:pisscompclosedunderubfilcolim}, which states that if 
$\{Y_{\alpha}\}$ is a filtered system in $\widehat{S^{0,0}}/\tau\text{-}\mathbf{Mod}^\heartsuit$ such that each $Y_{\alpha}$ is harmonic, then the colimit of $\{Y_{\alpha}\}$ in $\widehat{S^{0,0}}/\tau\text{-}\mathbf{Mod}^\heartsuit$ is also harmonic.

\begin{proof}[Proof of Lemma \ref{lemma:pisscompclosedunderubfilcolim}]
Consider the absolute Adams-Novikov spectral sequence of Theorem \ref{thm:ANss}
\begin{equation*}
\Ext^{s,t,w}_{\textup{MU}^{\textup{mot}}_{*,*}\textup{MU}^{\textup{mot}}/\tau}\left(\textup{MU}^{\textup{mot}}_{*,*}\widehat{S^{0,0}}/\tau, \textup{MU}^{\textup{mot}}_{*,*}Y \right) \Longrightarrow \left[\Sigma^{t-s,w} \widehat{S^{0,0}}/\tau,Y^{\smas}_{\textup{MU}^{\textup{mot}}} \right]_{\widehat{S^{0,0}}/\tau} \cong \pi_{t-s,w}Y^{\smas}_{\textup{MU}^{\textup{mot}}}
\end{equation*}
in the case that $X = \widehat{S^{0,0}}/\tau$ and $Y = \textup{colim} \ Y_{\alpha}$. Since both $\widehat{S^{0,0}}/\tau$ and $Y$ are in $\widehat{S^{0,0}}/\tau\text{-}\mathbf{Mod}^\heartsuit$, the $E_2$-page is concentrated in degrees $t = 2w$. Since all differentials preserve the motivic weights $w$, this spectral sequence collapses at the $E_2$-page. There are no hidden extensions due to degree reasons. Therefore, we have the isomorphism
\begin{equation*} 
\pi_{t,w} Y^{\smas}_{\textup{MU}^{\textup{mot}}} \cong [\Sigma^{t,w} \widehat{S^{0,0}}/\tau ,Y^{\smas}_{\textup{MU}^{\textup{mot}}}]_{\widehat{S^{0,0}}/\tau} \cong \Ext^{2w -t, 2w, w}_{\textup{MU}^{\textup{mot}}_{*,*}\textup{MU}^{\textup{mot}}/\tau}(\textup{MU}^{\textup{mot}}_{*,*}\widehat{S^{0,0}}/\tau, \textup{MU}^{\textup{mot}}_{*,*}Y).
\end{equation*}

Since $\textup{MU}^{\textup{mot}}_{*,*}\widehat{S^{0,0}}/\tau \cong \textup{MU}^{\textup{mot}}_{*,*}/\tau$ is free over $\textup{MU}^{\textup{mot}}_{*,*}/\tau$, one can use the canonical cobar resolution for $\textup{MU}^{\textup{mot}}_{*,*}Y$. Since it is functorial and commutes with filtered colimits,
the isomorphism
\begin{equation*}
\colim \textup{MU}^{\textup{mot}}_{*,*}Y_{\alpha} \cong \textup{MU}^{\textup{mot}}_{*,*}(Y)
\end{equation*}
induces an isomorphism 
\begin{equation*} 
\colim \Ext^{*,*,*}_{\textup{MU}^{\textup{mot}}_{*,*}\textup{MU}^{\textup{mot}}/\tau}\left(\textup{MU}^{\textup{mot}}_{*,*}/\tau,\textup{MU}^{\textup{mot}}_{*,*}Y_{\alpha} \right) \cong \Ext^{*,*,*}_{\textup{MU}^{\textup{mot}}_{*,*}\textup{MU}^{\textup{mot}}/\tau}\left( \textup{MU}^{\textup{mot}}_{*,*}/\tau, \textup{MU}^{\textup{mot}}_{*,*}Y \right).
\end{equation*}

Therefore, we have the following isomorphisms
\begin{align*}
\pi_{t,w} Y & \cong \pi_{t,w} (\textup{colim} \ Y_\alpha)\\
& \cong \textup{colim} \ \pi_{t,w} Y_\alpha\\
& \cong \textup{colim} \ [\Sigma^{t,w} \widehat{S^{0,0}}/\tau,  Y_\alpha]_{\widehat{S^{0,0}}/\tau}\\
& \cong \textup{colim} \Ext^{2w-t,2w,w}_{\textup{MU}^{\textup{mot}}_{*,*}\textup{MU}^{\textup{mot}}/\tau}\left(\textup{MU}^{\textup{mot}}_{*,*}/\tau, \textup{MU}^{\textup{mot}}_{*,*}Y_{\alpha} \right)\\
& \cong \Ext^{2w-t,2w,w}_{\textup{MU}^{\textup{mot}}_{*,*}\textup{MU}^{\textup{mot}}/\tau}\left(\textup{MU}^{\textup{mot}}_{*,*}/\tau, \textup{MU}^{\textup{mot}}_{*,*}Y \right)\\
& \cong [\Sigma^{t,w} \widehat{S^{0,0}}/\tau ,Y^{\smas}_{\textup{MU}^{\textup{mot}}}]_{\widehat{S^{0,0}}/\tau}\\
& \cong \pi_{t,w} Y^{\smas}_{\textup{MU}^{\textup{mot}}},
\end{align*}
where the fourth isomorphism is given by Corollary \ref{cor:L45}, since each $Y_{\alpha}$ is harmonic. The composite is induced by the completion map $Y \rightarrow Y^{\smas}_{\textup{MU}^{\textup{mot}}}$. This shows that $Y$ is harmonic.
\end{proof}

\begin{remark}
Lemma~\ref{lemma:pisscompclosedunderubfilcolim} can be generalized to the case when there is a uniform bound on the Chow-Novikov degrees of $\textup{MU}^{\textup{mot}}_{*,*}Y_{\alpha}$ for all $\alpha$. Part $(3)$ of Example~4.1 shows that Lemma~\ref{lemma:pisscompclosedunderubfilcolim} cannot hold without any bound on Chow-Novikov degrees.

\end{remark}

\section{Further questions}
The category of cellular modules over $\widehat{S^{0,0}}/\tau$ measures the difference between cellular  modules over the H$\mathbb{F}_p^{\textup{mot}}$-completed motivic sphere spectrum $\widehat{S^{0,0}}$ and cellular modules over the classical $p$-completed sphere spectrum $\widehat{S^0}$.


\begin{defn}
Let $ \widehat{S^{0,0}}\text{-}\mathbf{Mod}_\textup{fin}$ be the category of finite cellular modules over $\widehat{S^{0,0}}$, and $ \widehat{S^0}\text{-}\mathbf{Mod}_\textup{fin}$ be the category of classical finite cellular modules over $\widehat{S^0}$.
Let $\widehat{S^{0,0}}\text{-}\mathbf{Mod}^{\tau\textup{-tor}}_\textup{fin}$ be the full subcategory of $\widehat{S^{0,0}}\text{-}\mathbf{Mod}_\textup{fin}$ that is generated by $\widehat{S^{0,0}}/\tau\text{-}\mathbf{Mod}_\text{fin}$ under cofibers, i.e. the smallest full subcategory containing objects of $\widehat{S^{0,0}}/\tau\text{-}\mathbf{Mod}_\text{fin}$ and closed under taking cofibers.
\end{defn}

The following Proposition \ref{prop:exactseqcat} can be proved from Dugger-Isaksen \cite[Sections 3.2 and 3.4]{DuggerIsaksenMASS} and Isaksen \cite[Proposition 3.0.2]{StableStems}.

\begin{prop} \label{prop:exactseqcat}
The sequence
\begin{displaymath}
	\xymatrix{
	\widehat{S^{0,0}}\text{-}\mathbf{Mod}^{\tau\textup{-tor}}_\textup{fin} \ar[r] & \widehat{S^{0,0}}\text{-}\mathbf{Mod}_\textup{fin} \ar[r]^{\widehat{\mathbf{Re}}} & \widehat{S^0}\textup{-}\mathbf{Mod}_\textup{fin}
	}
\end{displaymath}
is an exact sequence of stable $\infty$-categories in the sense of Blumberg-Gepner-Tabuada \cite[Section 5]{BGT},
where $\widehat{\mathbf{Re}}$ is the $p$-completed version of the Betti realization functor (\cite[Theorem 1.4]{DI04}) explained below. 
\end{prop}

Let $\mathbf{Re}$ be the Betti realization functor constructed in Dugger-Isaksen \cite[Theorem 1.4]{DI04}. It is symmetric monoidal and preserves colimits. It was shown by Dugger-Isaksen \cite{DuggerIsaksenMASS} that, $\mathbf{Re}$ sends the motivic Adams tower for $S^{0,0}$ to the classical Adams tower for $S^0$. Taking the limit, we get a map of $E_\infty$ spectra:
$$\mathbf{Re}(\widehat{S^{0,0}})\rightarrow \widehat{S^0}.$$

For any $\widehat{S^{0,0}}$-module $X$, we define the $p$-completed Betti realization functor to be
$$\widehat{\mathbf{Re}}(X) := \mathbf{Re}(X)\wedge_{\mathbf{Re}(\widehat{S^{0,0}})}\widehat{S^0}.$$
The $p$-completed Betti realization functor $\widehat{\mathbf{Re}}$ sends $\widehat{S^{0,0}}$ to $\widehat{S^0}$. It is symmetric monoidal, and preserves colimits.


In the sense of Proposition \ref{prop:exactseqcat}, our Theorem~\ref{intro:mainthmpart1} gives a decomposition of the cellular stable motivic category into more classical categories.

In particular, we can apply the non-connective algebraic $K$-theory functor $\mathbb{K}$ constructed in Blumberg-Gepner-Tabuada \cite[Section 9]{BGT}, and get a cofiber sequence of non-connective algebraic $K$-theory spectra, since the functor $\mathbb{K}$ sends exact sequence of stable $\infty$-categories into cofiber sequences:
\begin{displaymath}
	\xymatrix{
	\mathbb{K}(\widehat{S^{0,0}}\text{-}\mathbf{Mod}_\text{fin}^{\tau\text{-tor}}) \ar[r] &\mathbb{K}(\widehat{S^{0,0}}\text{-}\mathbf{Mod}_\text{fin}) \ar[r]^{\widehat{\mathbf{Re}}} &\mathbb{K}(\widehat{S^0}\text{-}\mathbf{Mod}_\text{fin})
	}
\end{displaymath}
Since the Betti realization functor admits a section, the above cofiber sequence actually splits
$$\mathbb{K}(\widehat{S^{0,0}}\text{-}\mathbf{Mod}_\text{fin}) \simeq \mathbb{K}(\widehat{S^0}\text{-}\mathbf{Mod}_\text{fin}) \vee \mathbb{K}(\widehat{S^{0,0}}\text{-}\mathbf{Mod}_\text{fin}^{\tau\text{-tor}}).$$
The spectrum $\mathbb{K}(\widehat{S^0}\text{-}\mathbf{Mod}_\text{fin})$ for the p-completed sphere spectrum is described by B\"{o}kstedt-Hsiang-Madsen \cite{BHM}.


To understand the spectrum $\mathbb{K}(\widehat{S^{0,0}}\text{-}\mathbf{Mod}^{\tau\text{-tor}}_\text{fin})$, we consider the inclusion functor
\begin{equation} \label{equ:in}
\widehat{S^{0,0}}/\tau\text{-}\mathbf{Mod}_\text{fin} \longrightarrow \widehat{S^{0,0}}\text{-}\mathbf{Mod}^{\tau\text{-tor}}_\text{fin}.
\end{equation}


We propose the following question.
\begin{ques}\label{devi}
Does this inclusion functor (\ref{equ:in})
induce an equivalence on non-connective algebraic $K$-theory spectra?
\end{ques}

Our Question~\ref{devi} is an example of the d\'evissage question for algebraic K-theories. It is known to be false in some situations (see Antieau-Barthel-Gepner \cite{ABG}). 

Let $\text{BP}_*\text{BP}\text{-}\mathbf{Comod}_\text{fin}^\textup{ev}$ be the subcategory of $\text{BP}_*\text{BP}\text{-}\mathbf{Comod}^\textup{ev}$ on those comodules whose underlying $\textup{BP}_*$-module is finitely presented.
If the answer to Question \ref{devi} is yes, then by the theorem of the heart due to Barwick \cite{Barwick12} and Theorem \ref{intro:mainthmpart1}, we have the following isomorphism for all $i\geq 0$ (we require this condition since Barwick's theorem only applies to the connective K-theories),
$$\mathbb{K}_i(\widehat{S^{0,0}}\text{-}\mathbf{Mod}_\textup{fin}) \cong \mathbb{K}_i(\widehat{S^0}\text{-}\mathbf{Mod}_\textup{fin})\oplus \mathbb{K}_i({\textup{BP}_*\textup{BP}\text{-}\mathbf{Comod}}^{\textup{ev}}_\textup{fin}).$$

If we further regard the category ${\textup{BP}_*\textup{BP}\text{-}\mathbf{Comod}}^{\textup{ev}}_\text{fin}$ as the category $\mathbf{Coh}(\mathcal{M}_{FG})$ of coherent sheaves over the moduli stack $\mathcal{M}_{FG}$ of formal groups \cite{Go}, and the answer to Question \ref{devi} is yes, 
then there is an isomorphism for all $i$,
$$ \mathbb{K}_i(\widehat{S^{0,0}}) \cong \mathbb{K}_i(\widehat{S^0})\oplus \mathbb{K}_i(\mathbf{Coh}(\mathcal{M}_{FG})).$$

\section{H$\mathbb{F}_p^{\textup{mot}}$-completion} \label{Hcompletion}

Let $R$ be an $E_\infty$-algebra in a symmetric monoidal stable $\infty$-category $\mathcal{C}$ with the unit object $S$.
\begin{defn} \label{d71}
For any object $Z$ in $\mathcal{C}$, we define its $R$-completion, denoted by $Z^\wedge_R$, as the totalization of the cosimplicial object
$$Z\otimes R^{\otimes \bullet},$$
where the co-face maps are induced by the unit map $S \rightarrow R$. 
\end{defn}

The $R$-completion $Z^\wedge_R$ in $\mathcal{C}$ has the following properties. 
\begin{enumerate}
\item It commutes with finite limits and finite colimits.
\item It commutes with suspensions and desuspensions.

\item If $Z$ is an $E_\infty$-algebra in $\mathcal{C}$, then $Z^\wedge_R$ is also an $E_\infty$-algebra in $\mathcal{C}$. Moreover, if $Y$ is an $Z$-module, then $Y^\wedge_R$ is an $Z^\wedge_R$-module. 
These are special cases of Corollary~3.2.2.5 in Higher Algebra \cite{HA}.
\end{enumerate}

Now we consider the category $\C\text{-}\mathbf{mot}\text{-}\mathbf{Spectra}$ and the H$\mathbb{F}_p^\textup{mot}$-completion of the sphere spectrum $S^{0,0}$:
$$\widehat{S^{0,0}} = {S^{0,0}}^\wedge_{\textup{H}\mathbb{F}_p^\textup{mot}}.$$
Both the sphere spectrum $S^{0,0}$ and the Eilenberg-Mac Lane spectrum H$\mathbb{F}_p^\textup{mot}$ are $E_\infty$-algebras in ${\C\text{-}\mathbf{mot}\text{-}\mathbf{Spectra}}$. Therefore, the H$\mathbb{F}_p^\textup{mot}$-completed sphere spectrum $\widehat{S^{0,0}}$ is an $E_\infty$-algebra.
 
It is a theorem of Hu-Kriz-Ormsby \cite{HKOrem, HKOcon} that, over any algebraic closed field of characteristic 0, the H$\mathbb{F}_p^\textup{mot}$-completion of the sphere spectrum and the usual $p$-completion of the motivic sphere spectrum have isomorphic motivic homotopy groups.

For the effect of the H$\mathbb{F}_p^\textup{mot}$-completion of the sphere spectrum on homotopy groups, Hu-Kriz-Ormsby \cite{HKOrem, HKOcon} pointed out that there is a short exact sequence on homotopy groups of the uncompleted sphere spectrum $S^{0,0}$ and the H$\mathbb{F}_p^\textup{mot}$-completed sphere spectrum $\widehat{S^{0,0}}$:
\begin{displaymath}
\xymatrix{
0 \ar[r] & \Ext^1_\mathbb{Z}(\mathbb{Z}/p^\infty, \pi_{s,w}{{S^{0,0}}}) \ar[r] & \pi_{s,w} \widehat{S^{0,0}} \ar[r] & \Hom_\mathbb{Z}(\mathbb{Z}/p^\infty, \pi_{s-1,w} {{S^{0,0}}}) \ar[r] & 0.
}	
\end{displaymath}
See \cite{HKOcon} for a general discussion regarding the effect of homotopy groups with respect to the H$\mathbb{F}_p^\textup{mot}$-completion.


Now we consider the cellular motivic spectrum MGL. Recall from \cite{Voe} that
\begin{displaymath}
	\textup{MGL} = \colim_{n,k \rightarrow \infty} \Sigma^{-2n,-n} \textup
{Thom}(V(k,n)).
\end{displaymath}
Here $V(k,n)$ is the tautological bundle over the Grassmannian $Gr(k,n)$ for $k\geq n$, which is the smooth scheme of complex n-planes in $\mathbb{C}^{k}$, and $\textup{Thom}(V(k,n))$ is its associated motivic Thom spectrum.

Recall that we have defined in Section~1.1 that
$$\textup{MU}^\textup{mot} = \textup{MGL} \wedge_{S^{0,0}} \widehat{S^{0,0}}.$$
By adjunction, for any $\widehat{S^{0,0}}$-module $X$, its MGL-completion in the category of $\C\text{-}\mathbf{mot}\text{-}\mathbf{Spectra}$ can be identified as its $\textup{MU}^\textup{mot}$-completion in the category $\widehat{S^{0,0}}\text{-}\mathbf{Mod}$.

The following proposition states that it has the same homotopy groups of the H$\mathbb{F}_p^\textup{mot}$-completion of MGL.

\begin{prop} \label{MUmot}
The natural map
$$\textup{MU}^\textup{mot} = \textup{MGL} \wedge_{S^{0,0}} \widehat{S^{0,0}} \rightarrow \textup{MGL}^\wedge_{\textup{H}\mathbb{F}_p^\textup{mot}}$$
induces an isomorphism on $\pi_{*,*}$.
\end{prop}


We prove Proposition~\ref{MUmot} by using Lemma~11 of \cite{HKOrem}. Recall from \cite{HKOrem} that a motivic cellular spectrum $X$ is $k$-connective, if $\pi_{s,w}X = 0$ for all $s$ and $w$ such that $s-w<k$. A cellular map $f:X\rightarrow Y$ between cellular motivic spectra is a $k$-equivalence, if its cofiber is $(k+1)$-connective. Lemma~11 of \cite{HKOrem} states that if $X$ is $k$-connective, then $X^\wedge_{\textup{H}\mathbb{F}_p^\textup{mot}}$ is also $k$-connective.

For MGL, recall from Schubert calculus (see Griffiths-Harris \cite[Section~1.5]{GH} in the classical setting and Wendt \cite[Proposition~2.2]{Wen} for adaption to the motivic setting) that the map
$$\Sigma^{-2n,-n} \textup{Thom}(V(k,n)) \rightarrow \textup{MGL}$$
is $m$-connective, where $m=\textup{min}\{n-1, \ k-n-1\}$.

For any finite cellular motivic spectrum $X$, we have
$$X \wedge_{S^{0,0}} \widehat{S^{0,0}} \simeq X^\wedge_{\textup{H}\mathbb{F}_p^\textup{mot}},$$
since both sides commute with finite colimits.

Because $\textup{Thom}(V(k,n))$ is a finite motivic spectrum for all $n$ and $k$, Proposition~\ref{MUmot} follows from the following Lemma~\ref{HFplemma}.

\begin{lemma} \label{HFplemma}
Suppose that $X$ is the colimit of motivic cellular spectra
\begin{displaymath}
\xymatrix{
X_1 \ar[r] & X_2 \ar[r] & \cdots.
}	
\end{displaymath}
Suppose further that there exists an increasing sequence $m_n$ of natural numbers
$$\lim_{n\rightarrow \infty} m_n =\infty$$ 
such that the map $X_n \rightarrow X$ is an $m_n$-equivalence for all $n$.
Then the map
$$ \colim ({(X_n)}^\wedge_{\textup{H}\mathbb{F}_p^\textup{mot}}) \rightarrow X^\wedge_{\textup{H}\mathbb{F}_p^\textup{mot}}$$
induces an isomorphism on $\pi_{*,*}$.
\end{lemma}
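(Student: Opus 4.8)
The plan is to reduce the statement to a filtration/connectivity argument about the $\textup{H}\mathbb{F}_p^\textup{mot}$-completion functor, exploiting two facts: the completion functor commutes with finite colimits (property (1) in Definition~\ref{d71}), and Lemma~11 of \cite{HKOrem}, which says that $k$-connectivity is preserved under $\textup{H}\mathbb{F}_p^\textup{mot}$-completion. The key observation is that although $\textup{H}\mathbb{F}_p^\textup{mot}$-completion is a totalization and does not commute with arbitrary (infinite) colimits on the nose, the hypothesis that $X_n \to X$ is an $m_n$-equivalence with $m_n \to \infty$ gives enough uniformity to compare homotopy groups in each fixed bidegree.

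\textbf{Step 1: Reformulate in terms of cofibers.} For each $n$, let $C_n$ be the cofiber of $X_n \to X$; by hypothesis $C_n$ is $(m_n+1)$-connective. Also let $C_n'$ be the cofiber of $X_n \to X_{n+1}$, so $X = \colim X_n$ and there are compatible triangles. Applying $(-)^\wedge_{\textup{H}\mathbb{F}_p^\textup{mot}}$, which commutes with finite colimits (hence with cofiber sequences) and with suspensions, we get cofiber sequences
$$(X_n)^\wedge_{\textup{H}\mathbb{F}_p^\textup{mot}} \lto X^\wedge_{\textup{H}\mathbb{F}_p^\textup{mot}} \lto (C_n)^\wedge_{\textup{H}\mathbb{F}_p^\textup{mot}},$$
and by Lemma~11 of \cite{HKOrem}, $(C_n)^\wedge_{\textup{H}\mathbb{F}_p^\textup{mot}}$ is again $(m_n+1)$-connective.

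\textbf{Step 2: Compare homotopy groups bidegree by bidegree.} Fix a bidegree $(s,w)$. Choose $n$ large enough that $m_n + 1 > s - w + 1$, so that $\pi_{s,w}$ and $\pi_{s+1,w}$ of $(C_n)^\wedge_{\textup{H}\mathbb{F}_p^\textup{mot}}$ both vanish. The long exact sequence in homotopy groups associated to the cofiber sequence in Step~1 then shows that $\pi_{s,w}(X_n)^\wedge_{\textup{H}\mathbb{F}_p^\textup{mot}} \to \pi_{s,w}X^\wedge_{\textup{H}\mathbb{F}_p^\textup{mot}}$ is an isomorphism. For the target side, one likewise checks using the cofiber sequences $(X_n)^\wedge \to (X_{n+1})^\wedge \to (C_n')^\wedge$ (with $C_n'$ at least $(m_n+1)$-connective, since it receives a map from the $(m_n+1)$-connective $C_{n+1}$ fitting in a triangle with $C_n$) that the maps $\pi_{s,w}(X_n)^\wedge_{\textup{H}\mathbb{F}_p^\textup{mot}} \to \pi_{s,w}(X_{n+1})^\wedge_{\textup{H}\mathbb{F}_p^\textup{mot}}$ are isomorphisms for $n$ large in the fixed bidegree, hence $\pi_{s,w}\colim (X_n)^\wedge_{\textup{H}\mathbb{F}_p^\textup{mot}} \cong \pi_{s,w}(X_N)^\wedge_{\textup{H}\mathbb{F}_p^\textup{mot}}$ for $N$ large (using that homotopy groups commute with sequential colimits). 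Combining the two identifications, the natural map $\colim (X_n)^\wedge_{\textup{H}\mathbb{F}_p^\textup{mot}} \to X^\wedge_{\textup{H}\mathbb{F}_p^\textup{mot}}$ is an isomorphism on $\pi_{s,w}$ for every $(s,w)$.

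\textbf{Main obstacle.} The delicate point is making sure the connectivity bookkeeping is consistent: the completion functor does not commute with the infinite colimit, so one cannot simply write $(\colim X_n)^\wedge \simeq \colim (X_n)^\wedge$; instead the whole argument must be run through the \emph{cofibers} $C_n$ and $C_n'$, whose connectivity is controlled by hypothesis and by Lemma~11 of \cite{HKOrem}, and one must verify that connectivity of cofibers is additive enough to propagate ($C_n'$ sitting in a triangle with $C_n$ and $C_{n+1}$). Once the connectivity estimates are lined up, the long exact sequences do all the work, and the proof is essentially formal. I would also note in passing that the same argument shows $\colim (X_n)^\wedge_{\textup{H}\mathbb{F}_p^\textup{mot}}$ is already $\textup{H}\mathbb{F}_p^\textup{mot}$-complete (being a colimit along equivalences-in-each-bidegree of complete objects, in the connective range), so the map in the statement is the comparison to the completion, consistent with Proposition~\ref{MUmot}.
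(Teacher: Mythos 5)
Your proof is correct and follows the same approach as the paper: apply Lemma~11 of Hu--Kriz--Ormsby to the cofiber of $X_n \to X$ to see that $(X_n)^\wedge_{\textup{H}\mathbb{F}_p^\textup{mot}} \to X^\wedge_{\textup{H}\mathbb{F}_p^\textup{mot}}$ is an $m_n$-equivalence, then pass to the colimit using that $\pi_{*,*}$ commutes with sequential colimits of cellular motivic spectra. The paper's proof is much terser (two lines) but is exactly the cofiber-plus-connectivity argument you unpacked; the extra comparison of $(X_n)^\wedge$ with $(X_{n+1})^\wedge$ in your Step~2 is redundant once $\pi_{s,w}(X_n)^\wedge \to \pi_{s,w}X^\wedge$ is known to be an isomorphism for $n$ large, but it causes no harm.
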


\begin{proof}



By assumption, the map $X_n \rightarrow X$ is an $m_n$-equivalence. By Lemma~11 of \cite{HKOrem} and above discussion, the map ${(X_n)}^\wedge_{\textup{H}\mathbb{F}_p^\textup{mot}} \rightarrow X^\wedge_{\textup{H}\mathbb{F}_p^\textup{mot}}$ is also an $m_n$-equivalence.

Taking the colimit, we have that
$$\pi_{*,*}\colim ((X_n)^\wedge_{\textup{H}\mathbb{F}_p^\textup{mot}} )\cong \colim \pi_{*,*}(X_n)^\wedge_{\textup{H}\mathbb{F}_p^\textup{mot}} \cong \pi_{*,*}X^\wedge_{\textup{H}\mathbb{F}_p^\textup{mot}}.$$
\end{proof}

\part{Equivalence of spectral sequences} \label{chap:equivss}

\section{Main theorem of Part 2}

The algebraic Novikov spectral sequence is introduced by Novikov \cite{Novikov} and Miller \cite{MillerThesis}. Ravenel's green book \cite{Ravenel} and Andrews-Miller's paper \cite{AM16} are also good references for this material.

\begin{thm}(Novikov \cite{Novikov}, Miller \cite{MillerThesis})
There exists a tri-graded spectral sequence with
$$E_1^{s,i,t} = \Ext^{s,t}_{\textup{BP}_*\textup{BP}/I}(\textup{BP}_*/I, I^{i}/I^{i+1}),$$
$$d_r: E_r^{s,i,t} \longrightarrow E_r^{s+1,i+r,t},$$
converging to
$$\Ext^{s,t}_{\textup{BP}_*\textup{BP}}(\textup{BP}_*,\textup{BP}_*).$$
Here $I=(p,v_1,v_2,\cdots)$ is the augmentation ideal of $\textup{BP}_*$.
\end{thm}

To compare it with the motivic Adams spectral spectral, which is studied by Morel, Dugger-Isaksen and Hu-Kriz-Ormsby \cite{MorelAdams, DuggerIsaksenMASS, HKOrem}, we re-grade the algebraic Novikov spectral sequence.

\begin{defn} \label{defn:afil}
We define the $a$-filtration of the algebraic Novikov spectral sequence
$$a = i + s.$$
\end{defn}

The goal of Part 2 of this paper is to prove the following Theorem \ref{thm:isoofss}.

\begin{thm} \label{thm:isoofss}
At each prime $p$, there is an isomorphism of tri-graded spectral sequences between the motivic Adams spectral sequence for $\widehat{S^{0,0}}/\tau$, which converges to the motivic homotopy groups of $\widehat{S^{0,0}}/\tau$, and the regraded algebraic Novikov spectral sequence, which converges to the Adams-Novikov $E_2$-page for the sphere spectrum.

The indexes are indicated in the following diagram:
\begin{displaymath}
    \xymatrix{
  \Ext_{\textup{BP}_*\textup{BP}/I}^{s,2w}(\textup{BP}_*/I, I^{a-s}/I^{a-s+1}) \ar@{=>}[dd]|{\mathbf{Algebraic \ Novikov \ SS}} \ar[rr]^-{\cong} & & \Ext_{{A}^{mot}_{*,*}}^{a, 2w-s+a, w}(\mathbb{F}_p[\tau], \mathbb{F}_p) \ar@{=>}[dd]|{\mathbf{Motivic \ Adams \ SS}} \\
  & & \\
  \Ext_{\textup{BP}_*\textup{BP}}^{s,2w}(\textup{BP}_*, \textup{BP}_*) \ar[rr]^{\cong} & & \pi_{2w-s,w}\widehat{S^{0,0}}/\tau.
    }
\end{displaymath}
Here ${A}_{*,*}^{mot}$ is the motivic mod $p$ dual Steenrod algebra.
\end{thm}

    }


\section{The equivalence to the motivic Adams spectral sequence} \label{eight}

\subsection{The algebraic Novikov tower}
In this subsection, we write down the algebraic Novikov tower using the newly defined $a$-filtration explicitly.

	
Recall from \cite[Definition~A.1.2.7]{Ravenel} that a $\textup{BP}_*\textup{BP}$-comodule is \emph{relative injective} if it is a direct summand of a $\textup{BP}_*\textup{BP}$-comodule of the form $\textup{BP}_*\textup{BP}\otimes_{\textup{BP}_*}M$ for some $\textup{BP}_*$-module $M$. Recall from \cite[Definition~A.1.2.10]{Ravenel} that, for the $\textup{BP}_*\textup{BP}$-comodule $\textup{BP}_*$, its \emph{relative injective resolution}
\begin{equation}\label{reli} 
\textup{BP}_* \longrightarrow C_0^0 \longrightarrow C_0^1 \longrightarrow \cdots 
\end{equation}
is a long exact sequence in the abelian category of $\textup{BP}_*\textup{BP}$-comodules, that satisfies the following two conditions: 
\begin{enumerate}
\item The long exact sequence (\ref{reli}) is split exact as $\textup{BP}_*$-modules.
\item Each comodule $C_0^s$ is relative injective.
\end{enumerate}

For now on, we fix such a relative injective resolution $C_0^*$ of $\textup{BP}_*$ that is concentrated in even internal degrees. Such a relative injective resolution exists (for example the cobar complex).

For $a \geq 1$, let $C_a^*$ be the sub cochain complex of $C_0^*$ defined by 
$$C_a^s =  I^{a-s}C_0^s.$$
Since $I$ is an invariant ideal of $\textup{BP}_*$, each $C_a^s$ is a sub $\textup{BP}_*\textup{BP}$-comodule of $C_0^s$. It is understood that $I^r = \textup{BP}_*$ for $r\leq 0$. Therefore, for $s\geq a$, we have $C_a^s = C_0^s$.

For $a \geq 0$, let $Q_a^*$ be the quotient cochain complex of the inclusion map
\begin{displaymath}
    \xymatrix{
  C_{a+1}^* \ar[r]^{i_a} & C_a^*.
    }
\end{displaymath}
Therefore, we have a tower of cochain complexes, which induces 
the following tower in the derived category of $\textup{BP}_*\textup{BP}$-comodules:
\begin{displaymath}
    \xymatrix{
\textup{BP}_* \ar[r]^-{\simeq} & C^*_0 \ar[d]^{q_0} & C^*_1 \ar[d]^{q_1} \ar[l]^-{i_0} & C^*_2 \ar[d]^{q_2} \ar[l]^{i_1} & \cdots \ar[l]^{i_2} \\
  & Q^*_0 & Q^*_1 & Q^*_2 &
    }
\end{displaymath}
For $s \geq a+1$,
$$ Q_a^s = I^{a-s}C_0^s/I^{a-s+1}C_0^s  = 0.$$
So in particular, the cochain complex $Q_a^*$ is bounded. This implies that each cochain complex $C_a^*$ has bounded cohomology. Therefore, although the cochain complexes $C_a^*$ are unbounded, they live in the category $\mathcal{D}^b(\textup{BP}_*\textup{BP}\text{-}\textbf{Comod}^\textup{ev})$. 

Applying the functor 
$$\mathbf{R}^{*,*}\Hom_{\textup{BP}_*\textup{BP}}(\textup{BP}_*,-),$$
where $\mathbf{R}^{*,*}\Hom_{\textup{BP}_*\textup{BP}}(-,-)$ is the derived homomorphisms in the category\\ $\mathcal{D}^b (\textup{BP}_*\textup{BP}\text{-}\textbf{Comod}^\textup{ev})$, we get a spectral sequence
with the $E_1$-page
$$\mathbf{R}^{*,*}\Hom_{\textup{BP}_*\textup{BP}}(\textup{BP}_*, Q_a^*),$$
converging to 
$$\mathbf{R}^{*,*}\Hom_{\textup{BP}_*\textup{BP}}(\textup{BP}_*,\textup{BP}_*) = \Ext^{*,*}_{\textup{BP}_*\textup{BP}}(\textup{BP}_*,\textup{BP}_*).$$
This is the regraded algebraic Novikov spectral sequence.

\subsection{Characterization of Adams towers}

Recall that we denote by $\textup{H}\mathbb{F}_p^\textup{mot}$ the motivic mod $p$ Eilenberg-Mac Lane spectrum. It is shown by Hu-Kriz-Ormsby \cite{HKOrem} and Hoyois \cite{Hoyois} that $\textup{H}\mathbb{F}_p^\textup{mot}$ is cellular. We denote 
$$\textup{H}\mathbb{F}_p^\textup{mot}/\tau := \widehat{S^{0,0}}/\tau \wedge_{\widehat{S^{0,0}}} \textup{H}\mathbb{F}_p^\textup{mot}.$$
\newtheorem{condition}[thm]{Condition}

\begin{defn} \label{defn:motAdamstower}
A tower in $\widehat{S^{0,0}}/\tau\text{-}\mathbf{Mod}^b_{\textup{harm}}$
\begin{displaymath} \label{Tower0}
    \xymatrix{
  \widehat{S^{0,0}}/\tau \ar[r]^-{\simeq} & X_0 \ar[d]^{f_0} & X_1 \ar[d]^{f_1} \ar[l]^-{g_0} & X_2 \ar[d]^{f_2} \ar[l]^{g_1} & \cdots \ar[l]^{g_2} \\
  & K_0 & K_1 & K_2 &
    }
\end{displaymath}
 is a \emph{motivic Adams tower} if
\begin{enumerate}\label{aasss}
\item each motivic spectrum $K_m$ is a retract of a wedge of suspensions of $\textup{H}\mathbb{F}_p^\textup{mot}/\tau$, \label{Adams1}
\item each map $f_m: X_m\longrightarrow K_m$ induces an epimorphism on the $\textup{H}\mathbb{F}_p^\textup{mot}$-cohomology. Or equivalently, each map $g_m: X_{m+1}\longrightarrow X_{m}$ induces the zero map on the $\textup{H}\mathbb{F}_p^\textup{mot}$-cohomology. \label{Adams2}
\end{enumerate}
\end{defn}
By the adjunction between modules over $\widehat{S^{0,0}}$ and $\widehat{S^{0,0}}/\tau$ and that $\widehat{S^{0,0}}/\tau$ is Spanier-Whitehead dual to itself up to a bidegree shift (see \cite[Proposition 4.3]{GheCt} for a proof for example), it is equivalent to check that each map $g_m$ induces the zero map on 
$$[-, \textup{H}\mathbb{F}_p^\textup{mot}/\tau]_{\widehat{S^{0,0}}/\tau}$$
in Condition $(2)$.

From the general discussions (see \cite{EM, MillerAdams, MillerNotes, Chris} for example), all such towers are equivalent to each other in the sense that there exist towers maps that induce canonical isomorphisms on the $E_2$-pages.

Dugger-Isaksen \cite{DuggerIsaksenMASS} uses the cobar construction to define the motivic Adams spectral sequence for $\widehat{S^{0,0}}/\tau$, which satisfies the two conditions in Definition \ref{defn:motAdamstower}. Therefore, the motivic Adams spectral sequence for $\widehat{S^{0,0}}/\tau$ by Dugger-Isaksen \cite{DuggerIsaksenMASS} is canonically isomorphic to the motivic Adams spectral sequence defined by any motivic Adams tower satisfying the two conditions in Definition \ref{defn:motAdamstower}.



Having the regraded algebraic Novikov tower in the category $\mathcal{D}^b({\textup{BP}_*\textup{BP}\text{-}\mathbf{Comod}}^{\textup{ev}})$, we use the equivalence of stable $\infty$-categories in Theorem \ref{thm:equivCtCellcompalg} and Proposition \ref{prop:bpbpmumucomod} in Part 1
\begin{displaymath} 
    \xymatrix{
\mathcal{D}^b({\textup{BP}_*\textup{BP}\text{-}\mathbf{Comod}}^{\textup{ev}}) \ar[r]^{\simeq} & \mathcal{D}^b(\textup{MU}_*\textup{MU}\text{-}\mathbf{Comod}^\textup{ev}) \ar[r]^-{\simeq} & \widehat{S^{0,0}}/\tau\text{-}\mathbf{Mod}_{\textup{harm}}^b.
}
\end{displaymath}
to get a tower in the category $\widehat{S^{0,0}}/\tau\text{-}\mathbf{Mod}^b_{\textup{harm}}$:
\begin{displaymath} 
    \xymatrix{
  \widehat{S^{0,0}}/\tau \ar[r]^-{\simeq} & Y_0 \ar[d] & Y_1 \ar[d] \ar[l] & Y_2 \ar[d] \ar[l] & \cdots \ar[l] \\
  & L_0 & L_1 & L_2 &
    }
\end{displaymath}

\begin{prop} \label{conditions for algNSS}
	The above tower is a motivic Adams tower in the sense of Definition~\ref{defn:motAdamstower}, if the following two conditions are satisfied for the re-graded algebraic Novikov tower in the category $\mathcal{D}^b({\textup{BP}_*\textup{BP}\text{-}\mathbf{Comod}}^{\textup{ev}})$:
	\begin{enumerate}
		\item each $Q_a^*$ is quasi-isomorphic to a retract of a direct sum of shifts of $\textup{BP}_*\textup{BP}/I$,
		\item each map $q_a: C^*_a\longrightarrow Q^*_a$ induces an epimorphism on $\mathbf{R}^{*,*}\Hom_{\textup{BP}_*}(-, \mathbb{F}_p)$. Or equivalently, each map $i_a: C^*_{a+1}\longrightarrow C^*_{a}$ induces the zero map on $\mathbf{R}^{*,*}\Hom_{\textup{BP}_*}(-, \mathbb{F}_p)$.
	\end{enumerate} 
\end{prop}

\begin{proof}
By the following Lemmas~\ref{lem:BPBPmodi} and \ref{lem:derivedhomo}, the two conditions in this proposition correspond to the two conditions in Definition~\ref{defn:motAdamstower}.
\end{proof}

For the first condition in Proposition~\ref{conditions for algNSS}, we identify the $\textup{BP}_*\textup{BP}$-comodule that corresponds to $\textup{H}\mathbb{F}_p^\textup{mot}$ under the equivalences of the hearts in Proposition \ref{prop:CtCellequivheart} and Proposition \ref{prop:bpbpmumucomod}. 
\begin{displaymath} 
    \xymatrix{ 
\widehat{S^{0,0}}/\tau\text{-}\mathbf{Mod}_{\textup{harm}}^\heartsuit \ar[rr]_-{\simeq}^-{\textup{MU}^\textup{mot}_{*,*}} & & \textup{MU}_*\textup{MU}\text{-}\textbf{Comod}^\textup{ev} \ar[rrr]_-{\simeq}^-{\textup{BP}_*\otimes_{\textup{MU}_*}-} & & & \textup{BP}_*\textup{BP}\text{-}\textbf{Comod}^\textup{ev}. 
}
\end{displaymath}

\begin{lemma} \label{lem:BPBPmodi}
Under the equivalences of the hearts, $\textup{H}\mathbb{F}_p^\textup{mot}/\tau$ corresponds to 
$\textup{BP}_*\textup{BP}/I$.
 %
\end{lemma}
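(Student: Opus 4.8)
The plan is to compute the $\textup{MU}^{\textup{mot}}_{*,*}$-homology of $\textup{H}\mathbb{F}_p^{\textup{mot}}/\tau$ and identify it, via the equivalences of hearts, with the comodule $\textup{BP}_*\textup{BP}/I$. First I would recall from Hu-Kriz-Ormsby \cite{HKOrem} and Dugger-Isaksen \cite{DuggerIsaksenMASS} the computation of $\textup{MU}^{\textup{mot}}_{*,*}\textup{H}\mathbb{F}_p^{\textup{mot}}$, which is the motivic analogue of the classical $\textup{MU}_*\textup{H}\mathbb{F}_p = \textup{MU}_*/I \otimes (\text{dual to the Milnor primitives part})$; smashing with $\widehat{S^{0,0}}/\tau$ over $\widehat{S^{0,0}}$ kills $\tau$ and, since $\tau$ acts as $v_0$-type element appropriately, one gets $\textup{MU}^{\textup{mot}}_{*,*}(\textup{H}\mathbb{F}_p^{\textup{mot}}/\tau) \cong (\textup{MU}^{\textup{mot}}_{*,*}/\tau)/I \otimes_{\mathbb{F}_p} P$, where $P$ is a polynomial part in Chow-Novikov degree $0$. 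The key point is that after tensoring up to $\textup{BP}$ (applying $\textup{BP}_* \otimes_{\textup{MU}_*} -$) and using the standard comparison, the underlying module becomes $\textup{BP}_*/I$ tensored with a polynomial algebra that precisely assembles into the comodule $\textup{BP}_*\textup{BP}/I$. I would verify that the comodule structure on $\textup{MU}^{\textup{mot}}_{*,*}(\textup{H}\mathbb{F}_p^{\textup{mot}}/\tau)$ matches the extended comodule structure on $\textup{BP}_*\textup{BP}/I$ by tracking the coaction through the motivic dual Steenrod algebra computation.

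The cleanest route is probably to first establish the module-level identification and then use that $\textup{H}\mathbb{F}_p^{\textup{mot}}/\tau$ lies in the heart $\widehat{S^{0,0}}/\tau\text{-}\mathbf{Mod}_{\textup{harm}}^{\heartsuit}$ (its $\textup{MU}^{\textup{mot}}$-homology is concentrated in Chow-Novikov degree $0$), so that the equivalence $\textup{MU}^{\textup{mot}}_{*,*}$ of Proposition~\ref{prop:CtCellequivheart} together with $\textup{BP}_* \otimes_{\textup{MU}_*} -$ of Proposition~\ref{prop:bpbpmumucomod} transports it to the correct comodule. Concretely, I would check that $\textup{H}\mathbb{F}_p^{\textup{mot}}/\tau$ is harmonic — this follows from $\textup{H}\mathbb{F}_p^{\textup{mot}}$ being $\textup{MGL}$-complete, which is part of the Hu-Kriz-Ormsby package quoted in Section~\ref{Hcompletion} and in Example~4.1 — and that $\textup{MGL}_{*,*}\textup{H}\mathbb{F}_p^{\textup{mot}}$ is concentrated in Chow-Novikov degree $0$, i.e.\ in bidegrees $(2n,n)$; the latter is immediate from the explicit formula $\textup{MGL}_{*,*}\textup{H}\mathbb{F}_p^{\textup{mot}} = \mathbb{F}_p[\tau][b_1,b_2,\dots]$-style answer with all $b_i$ and auxiliary classes in Chow-Novikov degree $0$.

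The main obstacle I expect is pinning down the comodule structure precisely rather than just the underlying module. Classically, $\textup{MU}_*\textup{H}\mathbb{F}_p$ is an extended comodule: $\textup{MU}_*\textup{H}\mathbb{F}_p \cong \textup{MU}_*\textup{MU} \otimes_{\textup{MU}_*} \textup{MU}_*/I$, which after passing to $\textup{BP}$ gives $\textup{BP}_*\textup{BP}/I = \textup{BP}_*\textup{BP} \otimes_{\textup{BP}_*} \textup{BP}_*/I$. The subtlety is making sure the motivic-mod-$\tau$ version carries exactly this coaction, i.e.\ that the natural map $\textup{MU}^{\textup{mot}}/\tau \wedge_{\widehat{S^{0,0}}/\tau} \textup{H}\mathbb{F}_p^{\textup{mot}}/\tau$ realizes the extended comodule $\textup{MU}^{\textup{mot}}_{*,*}\textup{MU}^{\textup{mot}}/\tau \otimes_{\textup{MU}^{\textup{mot}}_{*,*}/\tau} (\textup{MU}^{\textup{mot}}_{*,*}/\tau)/I$. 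I would handle this by using the Hurewicz/unit map $\widehat{S^{0,0}}/\tau \to \textup{H}\mathbb{F}_p^{\textup{mot}}/\tau$, computing $\textup{MU}^{\textup{mot}}_{*,*}$ of both the source and the smash $\textup{H}\mathbb{F}_p^{\textup{mot}}/\tau \wedge_{\widehat{S^{0,0}}/\tau} \textup{H}\mathbb{F}_p^{\textup{mot}}/\tau$, and comparing with the cobar-type description; the coaction is then forced by naturality. Alternatively, one can invoke that under the equivalence, $\textup{H}\mathbb{F}_p^{\textup{mot}}/\tau$ must be the unique object in the heart realizing this particular comodule, and the comodule $\textup{BP}_*\textup{BP}/I$ is singled out as the one whose $\mathbf{R}^{*,*}\Hom_{\textup{BP}_*\textup{BP}}(\textup{BP}_*, -)$ gives the $E_2$-page of the Adams spectral sequence — matching the known identification of $\pi_{*,*}(\textup{H}\mathbb{F}_p^{\textup{mot}}/\tau \wedge \cdots)$. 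The remaining steps (Lemmas~\ref{lem:BPBPmodi} being used in Proposition~\ref{conditions for algNSS}) then go through by the change-of-rings isomorphism identifying $\Ext_{\textup{BP}_*\textup{BP}}(\textup{BP}_*, \textup{BP}_*\textup{BP}/I)$ with $\Ext$ over $\textup{BP}_*\textup{BP}/I$.
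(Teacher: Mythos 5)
Your overall strategy agrees with the paper's: compute $\textup{MU}^{\textup{mot}}_{*,*}(\textup{H}\mathbb{F}_p^{\textup{mot}}/\tau)$, recognize it as an extended comodule, and transport across the equivalences of hearts via Landweber exactness to obtain $\textup{BP}_*\textup{BP}\otimes_{\textup{BP}_*}\mathbb{F}_p = \textup{BP}_*\textup{BP}/I$. You also correctly single out the comodule structure, not the underlying module, as the real content. However, the mechanism you propose for pinning down that coaction is substantially heavier than necessary, and the paper's proof uses a structural shortcut that you mention in passing but do not exploit: $\textup{H}\mathbb{F}_p^{\textup{mot}}$ is an $\textup{MU}^{\textup{mot}}$-module, so $\textup{H}\mathbb{F}_p^{\textup{mot}}/\tau$ is an $\textup{MU}^{\textup{mot}}/\tau$-module. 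This gives the equivalence
\[
\textup{MU}^{\textup{mot}}/\tau \wedge_{\widehat{S^{0,0}}/\tau} \textup{H}\mathbb{F}_p^{\textup{mot}}/\tau \;\simeq\; \bigl(\textup{MU}^{\textup{mot}}/\tau \wedge_{\widehat{S^{0,0}}/\tau} \textup{MU}^{\textup{mot}}/\tau\bigr) \wedge_{\textup{MU}^{\textup{mot}}/\tau} \textup{H}\mathbb{F}_p^{\textup{mot}}/\tau,
\]
and the Dugger--Isaksen universal coefficient (K\"unneth) spectral sequence in $\textup{MU}^{\textup{mot}}/\tau\text{-}\mathbf{Mod}_{\textup{cell}}$ collapses because $\textup{MU}^{\textup{mot}}_{*,*}\textup{MU}^{\textup{mot}}/\tau$ is free over $\textup{MU}^{\textup{mot}}_{*,*}/\tau$. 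That immediately identifies the answer as the \emph{extended} comodule $\textup{MU}^{\textup{mot}}_{*,*}\textup{MU}^{\textup{mot}}/\tau \otimes_{\textup{MU}^{\textup{mot}}_{*,*}/\tau} \mathbb{F}_p$, coaction included, with no need to recall the explicit motivic dual Steenrod algebra, to compute $\textup{H}\mathbb{F}_p^{\textup{mot}}/\tau \wedge_{\widehat{S^{0,0}}/\tau} \textup{H}\mathbb{F}_p^{\textup{mot}}/\tau$, or to invoke uniqueness in the heart (the last of which is somewhat circular: the equivalence of hearts identifies objects \emph{by} their comodules, so you would still have to compute the motivic-side comodule first). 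You should replace your ``track the coaction'' and uniqueness arguments with this one line.

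One further remark: both the module computation and the transport step deserve to be written out. After the spectral sequence collapse, the paper chains together
\[
\textup{BP}_*\otimes_{\textup{MU}_*}\textup{MU}_*\textup{MU}\otimes_{\textup{MU}_*}\mathbb{F}_p \cong \textup{BP}_*\textup{MU}\otimes_{\textup{MU}_*}\mathbb{F}_p \cong \textup{BP}_*\textup{BP}\otimes_{\textup{BP}_*}\mathbb{F}_p \cong \textup{BP}_*\textup{BP}/I,
\]
with the first and middle identifications coming from Landweber exactness of $\textup{BP}_*$ over $\textup{MU}_*$; your ``standard comparison'' phrase glosses exactly the step where the Landweber exactness input is used, and it is worth being explicit there.
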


\begin{proof}

Since $\textup{H}\mathbb{F}_p^\textup{mot}$ is an $\textup{MU}^\textup{mot}$-module, both $\textup{MU}^\textup{mot}$ and $\textup{H}\mathbb{F}_p^\textup{mot}$ are cellular, 
and $\textup{MU}^{\textup{mot}}_{*,*}\textup{MU}^{\textup{mot}}/\tau$ is free over $\textup{MU}^{\textup{mot}}_{*,*}/\tau$, 
by Dugger-Isaksen's universal coefficient spectral sequence \cite[Proposition 7.7]{DuggerIsaksen} in the category $\textup{MU}^{\textup{mot}}/\tau\text{-}\mathbf{Mod}_{\textup{cell}}$, we have 
\begin{align*}
\textup{MU}^{\textup{mot}}_{*,*}\textup{H}\mathbb{F}_p^\textup{mot}/\tau 
& \cong  \textup{MU}^{\textup{mot}}_{*,*}\textup{MU}^{\textup{mot}}/\tau \otimes_{\textup{MU}^{\textup{mot}}_{*,*}/\tau} \mathbb{F}_p,
\end{align*}
which is isomorphic to $\textup{MU}_{*}\textup{MU} \otimes_{\textup{MU}_{*}} \mathbb{F}_p$ forgetting the motivic weight.

Therefore, under the equivalences in Proposition \ref{prop:CtCellequivheart} and Proposition \ref{prop:bpbpmumucomod}, the $\widehat{S^{0,0}}/\tau$-module $\textup{H}\mathbb{F}_p^\textup{mot}/\tau$ corresponds to
\begin{align*}
\textup{BP}_*\otimes_{\textup{MU}_*}\textup{MU}_*\textup{MU}\otimes_{\textup{MU}_*}\mathbb{F}_p & \cong \textup{BP}_*\textup{MU}\otimes_{\textup{MU}_*}\mathbb{F}_p \\
& \cong \textup{BP}_*\textup{MU}\otimes_{\textup{MU}_*} \textup{BP}_* \otimes_{\textup{BP}_*}\mathbb{F}_p \\
& \cong \textup{BP}_*\textup{BP}\otimes_{\textup{BP}_*}\mathbb{F}_p \\
& \cong \textup{BP}_*\textup{BP}/I.
\end{align*}
The first and third isomorphisms follow from the Landweber exactness of $\textup{BP}_*$.
This completes the proof.
\end{proof}

For the second condition in Proposition~\ref{conditions for algNSS}, we have the following Lemma~\ref{lem:derivedhomo}.

\begin{lemma} \label{lem:derivedhomo}
Suppose that $X$ is in the category $\widehat{S^{0,0}}/\tau\text{-}\mathbf{Mod}^b_{\textup{harm}}$ and that $C^*(X)$ is the cochain complex of $\textup{BP}_*\textup{BP}$-comodules representing the image of $X$ under the equivalence in Theorem \ref{thm:equivCtCellcompalg} in Part 1. 

Then we have 
$$[\Sigma^{*,*}X, \textup{H}\mathbb{F}_p^\textup{mot}/\tau]_{\widehat{S^{0,0}}/\tau} \cong \mathbf{R}^{*,*}\Hom_{\textup{BP}_*}(C^*(X), \mathbb{F}_p),$$
where $\mathbf{R}^{*,*}\Hom_{\textup{BP}_*}(-,-)$ is the derived homomorphism in the derived category of $\textup{BP}_*$-modules.
\end{lemma}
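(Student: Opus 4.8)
The plan is to transport the left-hand group through the equivalence of stable $\infty$-categories of Theorem~\ref{thm:equivCtCellcompalg} and then apply a derived change-of-rings isomorphism; there is no deep content, only the identification of the object matched with $\textup{H}\mathbb{F}_p^\textup{mot}/\tau$ (already done in Lemma~\ref{lem:BPBPmodi}) together with a routine homological computation.

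First I would invoke Theorem~\ref{thm:equivCtCellcompalg} together with Proposition~\ref{prop:bpbpmumucomod}, which provide an equivalence of stable $\infty$-categories
$$\widehat{S^{0,0}}/\tau\text{-}\mathbf{Mod}_{\textup{harm}}^b \simeq \mathcal{D}^b(\textup{BP}_*\textup{BP}\text{-}\mathbf{Comod}^\textup{ev})$$
sending $X$ to $C^*(X)$ and, by Lemma~\ref{lem:BPBPmodi}, sending $\textup{H}\mathbb{F}_p^\textup{mot}/\tau$ to $\textup{BP}_*\textup{BP}/I$. Since an equivalence of stable $\infty$-categories induces isomorphisms on all morphism groups, this already gives
$$[\Sigma^{*,*}X,\ \textup{H}\mathbb{F}_p^\textup{mot}/\tau]_{\widehat{S^{0,0}}/\tau} \cong \mathbf{R}^{*,*}\Hom_{\textup{BP}_*\textup{BP}}(C^*(X),\ \textup{BP}_*\textup{BP}/I),$$
where on the right-hand side the bigrading $(*,*)$ records cohomological degree and internal degree, matched with the topological degree and motivic weight on the left through the dictionary spelled out in Remark~\ref{rem:bigrading}.

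Next I would establish the change-of-rings isomorphism identifying the right-hand side with $\mathbf{R}^{*,*}\Hom_{\textup{BP}_*}(C^*(X),\mathbb{F}_p)$. The forgetful functor $U \colon \textup{BP}_*\textup{BP}\text{-}\mathbf{Comod}^\textup{ev} \to \textup{BP}_*\text{-}\mathbf{Mod}^\textup{ev}$ is exact and admits the right adjoint $\textup{BP}_*\textup{BP}\otimes_{\textup{BP}_*}(-)$, which is again exact because $\textup{BP}_*\textup{BP}$ is free, hence flat, over $\textup{BP}_*$; being right adjoint to an exact functor, it also preserves injective objects. Choosing an injective resolution $\mathbb{F}_p \hookrightarrow E^\bullet$ in $\textup{BP}_*\text{-}\mathbf{Mod}^\textup{ev}$, the complex $\textup{BP}_*\textup{BP}\otimes_{\textup{BP}_*}E^\bullet$ is therefore an injective resolution of $\textup{BP}_*\textup{BP}\otimes_{\textup{BP}_*}\mathbb{F}_p \cong \textup{BP}_*\textup{BP}/I$ in $\textup{BP}_*\textup{BP}\text{-}\mathbf{Comod}^\textup{ev}$. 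Since $C^*(X)$ is a bounded complex, the derived Hom is computed by the total complex $\Hom^\bullet_{\textup{BP}_*\textup{BP}}(C^*(X),\ \textup{BP}_*\textup{BP}\otimes_{\textup{BP}_*}E^\bullet)$, and applying the underived adjunction isomorphism levelwise identifies this with $\Hom^\bullet_{\textup{BP}_*}(C^*(X), E^\bullet)$, whose cohomology is $\mathbf{R}^{*,*}\Hom_{\textup{BP}_*}(C^*(X), \mathbb{F}_p)$. Combining the two displayed isomorphisms proves the lemma.

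The hard part, such as it is, will be purely formal: checking that $\textup{BP}_*\textup{BP}\otimes_{\textup{BP}_*}(-)$ is exact and preserves injectives — so that $\textup{BP}_*\textup{BP}\otimes_{\textup{BP}_*}E^\bullet$ is genuinely an injective resolution rather than merely a resolution by relative injectives — and carefully matching the trigradings through the chain of equivalences so that the indices on both sides agree as asserted. The first is the standard comodule change-of-rings fact (cf.\ the arguments in Appendix~A1 of \cite{Ravenel}), and the second is read off from Remark~\ref{rem:bigrading}.
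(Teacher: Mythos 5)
Your proof is correct and follows essentially the same two-step strategy as the paper: first transport through the equivalence of Theorem~\ref{thm:equivCtCellcompalg} together with Lemma~\ref{lem:BPBPmodi} to obtain $\mathbf{R}^{*,*}\Hom_{\textup{BP}_*\textup{BP}}(C^*(X),\textup{BP}_*\textup{BP}\otimes_{\textup{BP}_*}\mathbb{F}_p)$, then apply the derived adjunction between the forgetful functor and $\textup{BP}_*\textup{BP}\otimes_{\textup{BP}_*}(-)$. The paper's version is terse on the second step, simply invoking the adjunction on derived categories; you spell out the justification explicitly (the tensor-up functor is exact by flatness of $\textup{BP}_*\textup{BP}$ over $\textup{BP}_*$ and preserves injectives as a right adjoint to the exact forgetful functor), which is exactly the content the paper is implicitly relying on. The two arguments agree in substance and differ only in the level of detail in which the change-of-rings isomorphism is unwound.
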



\begin{proof}
We have
\begin{align*}
	[\Sigma^{*,*}X, \ \textup{H}\mathbb{F}_p^\textup{mot}/\tau]_{\widehat{S^{0,0}}/\tau} & \cong \mathbf{R}^{*,*}\Hom_{\textup{BP}_*\textup{BP}}(C^*(X), \ \textup{BP}_*\textup{BP} \otimes_{\textup{BP}_*} \mathbb{F}_p) \\
	& \cong \mathbf{R}^{*,*}\Hom_{\textup{BP}_*}(C^*(X), \ \mathbb{F}_p).
\end{align*}
The first isomorphism follows from Theorem~\ref{thm:equivCtCellcompalg} and Lemma~\ref{lem:BPBPmodi}, and the second isomorphism follows from the adjunction of the derived functor of $\textup{BP}_*\textup{BP} \otimes_{\textup{BP}_*} - $ and the forgetful functor between the derived categories of $\textup{BP}_*$-modules and $\textup{BP}_*\textup{BP}$-comodules.

\end{proof}


In the rest of this section, we check that the two conditions in Proposition~\ref{conditions for algNSS} are satisfied by the regraded algebraic Novikov tower in the category $\mathcal{D}^b (\textup{BP}_*\textup{BP}\text{-}\textbf{Comod}^\textup{ev})$.

\subsection{Proof of the first condition}

\begin{lemma} \label{HLemma}
Suppose that $N$ is a relative injective 
$\textup{BP}_*\textup{BP}$-comodule that is concentrated in even degrees. Then for any $a$, $I^aN/I^{a+1}N$ 
is isomorphic to a retract of
a direct sum of shifts of $\textup{BP}_*\textup{BP}/I$.
\end{lemma}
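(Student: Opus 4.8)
The plan is to reduce the statement to a purely structural fact about relative injective comodules. A relative injective $\textup{BP}_*\textup{BP}$-comodule $N$ is, by definition, a retract of $\textup{BP}_*\textup{BP}\otimes_{\textup{BP}_*}M$ for some $\textup{BP}_*$-module $M$; and since $N$ is concentrated in even degrees, one can arrange $M$ to be concentrated in even degrees as well (by splitting off the odd part, or by replacing $M$ with the even part of $\textup{BP}_*\textup{BP}\otimes_{\textup{BP}_*}M$, which is again a $\textup{BP}_*$-module since $\textup{BP}_*$ is even). The functor $M \mapsto \textup{BP}_*\textup{BP}\otimes_{\textup{BP}_*}M$ is exact and additive, and the operation $L \mapsto I^aL/I^{a+1}L$ commutes with it up to the natural identification
\begin{equation*}
I^a(\textup{BP}_*\textup{BP}\otimes_{\textup{BP}_*}M)/I^{a+1}(\textup{BP}_*\textup{BP}\otimes_{\textup{BP}_*}M) \cong (\textup{BP}_*\textup{BP}/I)\otimes_{\textup{BP}_*/I}(I^aM/I^{a+1}M),
\end{equation*}
using that $I^a(\textup{BP}_*\textup{BP}\otimes_{\textup{BP}_*}M) = \textup{BP}_*\textup{BP}\otimes_{\textup{BP}_*}I^aM$ because $\textup{BP}_*\textup{BP}$ is flat (indeed free) over $\textup{BP}_*$. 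So the first step is to establish this identification at the level of comodules, checking that the comodule structure on the associated graded is the one induced from $\textup{BP}_*\textup{BP}/I$.

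The second step is to handle the factor $I^aM/I^{a+1}M$. This is a graded module over the ring $\textup{BP}_*/I = \mathbb{F}_p$, hence a graded $\mathbb{F}_p$-vector space, so it is (non-canonically) a direct sum of shifts $\Sigma^{n}\mathbb{F}_p$ of the ground field. Tensoring up, $\textup{BP}_*\textup{BP}\otimes_{\textup{BP}_*}M$ then has $I^a(-)/I^{a+1}(-)$ isomorphic as a $\textup{BP}_*\textup{BP}/I$-comodule to a direct sum of shifts of $\textup{BP}_*\textup{BP}\otimes_{\textup{BP}_*}\mathbb{F}_p = \textup{BP}_*\textup{BP}/I$. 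Finally, passing to the retract: if $N$ is a retract of $\textup{BP}_*\textup{BP}\otimes_{\textup{BP}_*}M$ via idempotent $e$, then $e$ respects the $I$-adic filtration (it is $\textup{BP}_*$-linear, so $e(I^aL)\subseteq I^aL$), hence induces an idempotent on $I^aN/I^{a+1}N$ exhibiting it as a retract of $I^a(\textup{BP}_*\textup{BP}\otimes_{\textup{BP}_*}M)/I^{a+1}(\cdots)$, which we have just identified with a direct sum of shifts of $\textup{BP}_*\textup{BP}/I$. This gives the conclusion.

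I expect the main obstacle to be the bookkeeping around comodule structures rather than any deep point: one must verify that the isomorphism in the first step is an isomorphism of $\textup{BP}_*\textup{BP}/I$-comodules (not merely of graded abelian groups or $\textup{BP}_*$-modules), and that the splitting $I^aM/I^{a+1}M \cong \bigoplus \Sigma^n\mathbb{F}_p$ is compatible with the trivial comodule structure so that tensoring it up yields a genuine comodule direct sum decomposition. A clean way to package this is to observe that $\textup{BP}_*\textup{BP}\otimes_{\textup{BP}_*}(-)$, as a functor from graded $\textup{BP}_*$-modules to $\textup{BP}_*\textup{BP}$-comodules, is additive and sends the $\mathbb{F}_p$-vector space $I^aM/I^{a+1}M$ (viewed with trivial $\textup{BP}_*$-action, i.e.\ as a $\textup{BP}_*/I$-module pulled back along $\textup{BP}_*\to\textup{BP}_*/I$) to the comodule $\textup{BP}_*\textup{BP}/I \otimes_{\mathbb{F}_p}(I^aM/I^{a+1}M)$, and a choice of $\mathbb{F}_p$-basis then gives the direct sum decomposition by additivity. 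One should also note for completeness that while $I^aM/I^{a+1}M$ may be infinite-dimensional, a direct sum of shifts of $\textup{BP}_*\textup{BP}/I$ is still the correct statement (the lemma as stated allows arbitrary direct sums), so no finiteness hypothesis is needed here.
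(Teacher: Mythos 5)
Your proposal follows essentially the same route as the paper: reduce to $N=\textup{BP}_*\textup{BP}\otimes_{\textup{BP}_*}M$, identify the associated graded $I^a(\textup{BP}_*\textup{BP}\otimes_{\textup{BP}_*}M)/I^{a+1}(\cdots)$ with $\textup{BP}_*\textup{BP}\otimes_{\textup{BP}_*}(I^aM/I^{a+1}M)$, then split $I^aM/I^{a+1}M$ as a direct sum of shifts of $\mathbb{F}_p$ and tensor up (the paper dispenses with the retract step via a ``without loss of generality,'' which you make explicit). One small correction in attribution: the identification $I^a(\textup{BP}_*\textup{BP}\otimes_{\textup{BP}_*}M)=\textup{BP}_*\textup{BP}\otimes_{\textup{BP}_*}I^aM$ does \emph{not} follow from freeness of $\textup{BP}_*\textup{BP}$ over $\textup{BP}_*$ alone; the left-hand side uses the left $\textup{BP}_*$-action (via $\eta_L$, which is the one governing the comodule structure) while the right-hand side is the submodule generated by $\eta_R(I^a)$, and equating these two submodules of $\textup{BP}_*\textup{BP}\otimes_{\textup{BP}_*}M$ is exactly where the invariance $\eta_L(I)\textup{BP}_*\textup{BP}=\eta_R(I)\textup{BP}_*\textup{BP}$ is used --- flatness only ensures the right-hand side embeds. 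The paper's terse ``because $I$ is an invariant ideal'' is pointing precisely at this; you should cite invariance rather than freeness as the key input, and then your remaining bookkeeping and the retract argument are fine.
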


\begin{proof}

Without loss of generality we assume that $N$ has the form $\textup{BP}_*\textup{BP}\otimes_{\textup{BP}_*}M$.

Because $I$ is an invariant ideal,
$${I^a\textup{BP}_*\textup{BP}\otimes_{\textup{BP}_*}M \over I^{a+1}\textup{BP}_*\textup{BP}\otimes_{\textup{BP}_*}M} \cong \textup{BP}_*\textup{BP}\otimes_{\textup{BP}_*} I^aM/I^{a+1}M$$
So it suffices to show that for any $\textup{BP}_*/I$-module $M'$, $\textup{BP}_*\textup{BP}\otimes_{\textup{BP}_*}M'$ corresponds to a direct sum of shifts of $\textup{BP}_*\textup{BP}/I$. This is straight forward.


\end{proof}

Now we prove that the regraded algebraic Novikov tower satisfies the first condition in Proposition~\ref{conditions for algNSS}.

\begin{prop} \label{prop:cttower1} \leavevmode
\begin{enumerate}
\item
All differentials in the cochain complex $Q_a^*$ are zero, and therefore $Q_a^*$ splits as a direct sum of cochain complexes that are concentrated in one cohomological degree. 
\item
Each $Q_a^s$ is
a retract of a direct sum of shifts of $\textup{BP}_*\textup{BP}/I$.
\end{enumerate}
Therefore, Condition (1) of Proposition \ref{conditions for algNSS} is satisfied.
\end{prop}

\begin{proof}
In $Q_a^*$, all differentials $Q_a^s \longrightarrow Q_a^{s+1}$ have the form
$$I^{a-s}C_0^s/I^{a-s+1}C_0^s \longrightarrow I^{a-s-1}C_0^{s+1}/I^{a-s}C_0^{s+1}.$$
They are all zero since they are $\textup{BP}_*$-linear. The second claim follows from Lemma~\ref{HLemma} and the definition of $Q_a^s$.
%
\end{proof}

\subsection{Proof of the second condition}

We will use the following Lemma~\ref{lem:tech} in the proof of Proposition~\ref{prop:cttower2}. The proof of Lemma~\ref{lem:tech} is technical. We will postpone it to the last subsection of this section.

\begin{lemma} \label{lem:tech}
The following homomorphisms 
$$\Ext^{*,*}_{\textup{BP}_*}(I^{a}, \mathbb{F}_p) \longrightarrow \Ext^{*,*}_{\textup{BP}_*}(I^{a+1}, \mathbb{F}_p),$$
that are induced by the inclusions $I^{a+1} \longrightarrow I^a$ are zero for all $a \geq 0$.
\end{lemma}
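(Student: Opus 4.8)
The plan is to compute $\Ext^{*,*}_{\textup{BP}_*}(I^a,\mathbb{F}_p)$ directly from a convenient free resolution of $I^a$ as a $\textup{BP}_*$-module, and to show that the inclusion $I^{a+1}\hookrightarrow I^a$ lifts to a chain map that is divisible by the augmentation ideal, hence vanishes after applying $\Hom_{\textup{BP}_*}(-,\mathbb{F}_p)$. Since $\textup{BP}_* = \mathbb{Z}_{(p)}[v_1,v_2,\dots]$ (or its $p$-completion) is a polynomial ring on the $v_i$ together with $v_0 = p$, the ideal $I = (v_0,v_1,v_2,\dots)$ is generated by a (countable) regular sequence, and $I^a$ is the $a$-th power of this ideal. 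The key structural input is that the associated graded ring $\bigoplus_a I^a/I^{a+1}$ is a polynomial ring over $\textup{BP}_*/I = \mathbb{F}_p$, so $I^a/I^{a+1}$ is a free $\mathbb{F}_p$-module on the degree-$a$ monomials in the $v_i$, and more importantly $I^a$ itself has a description making its minimal free resolution tractable.

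First I would set up the computation of $\Ext^{*,*}_{\textup{BP}_*}(I^a,\mathbb{F}_p)$. The cleanest route is to use the short exact sequences $0\to I^{a+1}\to I^a\to I^a/I^{a+1}\to 0$ together with the fact that $\Ext^{*,*}_{\textup{BP}_*}(\mathbb{F}_p,\mathbb{F}_p)$ is an exterior-times-polynomial algebra computed by the Koszul complex on the regular sequence $(v_0,v_1,\dots)$; more precisely $\Ext^{*,*}_{\textup{BP}_*}(\mathbb{F}_p,\mathbb{F}_p)\cong \Lambda(a_0,a_1,\dots)$ with $a_i$ in homological degree $1$ (this is the standard computation, e.g.\ via the Koszul resolution, and appears in Ravenel's green book). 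The point of Lemma~\ref{lem:tech} is then the assertion that in the long exact sequence
$$\cdots\to \Ext^{s}_{\textup{BP}_*}(I^a/I^{a+1},\mathbb{F}_p)\to \Ext^{s}_{\textup{BP}_*}(I^a,\mathbb{F}_p)\to \Ext^{s}_{\textup{BP}_*}(I^{a+1},\mathbb{F}_p)\to\cdots$$
the middle-to-right map is zero, equivalently the connecting map $\Ext^{s}_{\textup{BP}_*}(I^{a+1},\mathbb{F}_p)\to \Ext^{s+1}_{\textup{BP}_*}(I^a/I^{a+1},\mathbb{F}_p)$ is injective. This is where I expect the real work: one must show that the extension class of $0\to I^{a+1}\to I^a\to I^a/I^{a+1}\to 0$ in $\Ext^1_{\textup{BP}_*}(I^a/I^{a+1},I^{a+1})$ is ``maximally nontrivial'' — concretely, that multiplication by each $v_i$ acts injectively enough on the relevant $\Ext$ groups. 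I would reduce to the case $a=0$ or handle all $a$ uniformly by observing that $I^a$ is, up to the filtration, built out of copies of $\textup{BP}_*$ and $\textup{BP}_*/(\text{regular sequences})$, and that the inclusion $I^{a+1}\hookrightarrow I^a$ factors (on generators) as multiplication by elements of $I$.

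The cleanest argument, which I would try to make precise, is the following: choose minimal free resolutions $P^a_\bullet\to I^a$ over $\textup{BP}_*$ (minimal in the sense that all differentials have entries in $I$, which exists because $\textup{BP}_*$ is local-like with respect to $I$ and all modules in sight are $I$-torsion-free and finitely built in each degree). Minimality forces $\Ext^{*,*}_{\textup{BP}_*}(I^a,\mathbb{F}_p) = \Hom_{\textup{BP}_*}(P^a_\bullet,\mathbb{F}_p)$ with zero differential. Now lift the inclusion $\iota\colon I^{a+1}\to I^a$ to a chain map $\tilde\iota\colon P^{a+1}_\bullet\to P^a_\bullet$. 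The claim becomes: $\tilde\iota$ can be chosen so that every matrix entry of every $\tilde\iota_s$ lies in $I$. In degree zero this is clear: a generating set of $I^{a+1}$ maps into $I^a$ by elements of $I\cdot I^a\subseteq I^{a+1}$... more carefully, $I^{a+1} = I\cdot I^a$, so each generator of $I^{a+1}$ is an $I$-linear combination of generators of $I^a$, giving $\tilde\iota_0$ with entries in $I$. Then a standard inductive argument up the resolution — using that $P^a_\bullet$ is a complex of free modules and the differentials are injective enough — shows the higher $\tilde\iota_s$ can also be taken with entries in $I$ (this uses that $I\cdot P^a_\bullet$ is preserved and a comparison-theorem argument modulo $I$). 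Applying $\Hom_{\textup{BP}_*}(-,\mathbb{F}_p)$ kills everything divisible by $I$, so the induced map on $\Ext$ is zero.

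The main obstacle is making the inductive step rigorous: I need to know that a null-homotopy-like argument goes through, i.e.\ that if $\tilde\iota_s$ has entries in $I$ then $\tilde\iota_{s+1}$ can be corrected to also have entries in $I$, which requires some control over the resolution $P^a_\bullet$ — ideally that it is the Koszul-type resolution coming from the regular sequence, or at least that $d(P^a_{s+1})\cap I\cdot P^a_s = d(I\cdot P^a_{s+1})$, a statement about the regularity of the sequence $(v_0,v_1,\dots)$ on the modules $I^a$. Since the $v_i$ form a regular sequence on $\textup{BP}_*$ and $I^a$ is obtained from $\textup{BP}_*$ by a controlled (invariant) filtration whose graded pieces are polynomial rings over $\mathbb{F}_p$, these modules are Cohen–Macaulay-like and the needed regularity should hold; I would cite the commutative algebra fact (this is presumably the reference attributed to Linquan Ma in the acknowledgements) that powers of an ideal generated by a regular sequence have free resolutions with differentials in the ideal. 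Once that is in hand, the rest is formal.
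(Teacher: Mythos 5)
Your plan correctly locates the crux: if $P^a_\bullet\to I^a$ is a minimal free resolution, so that $\Ext^{*,*}_{\textup{BP}_*}(I^a,\mathbb{F}_p)=\Hom_{\textup{BP}_*}(P^a_\bullet,\mathbb{F}_p)$ with zero differential, it suffices to exhibit a lift $\tilde\iota_\bullet\colon P^{a+1}_\bullet\to P^a_\bullet$ of the inclusion with every matrix entry in $I$. But the inductive step you describe is not a formal comparison-theorem argument, and cannot be, because the analogous statement is false over non-regular rings. Take $R=k[x]/(x^3)$ and $I=(x)$: the minimal resolutions of $I\cong R/(x^2)$ and $I^2\cong R/(x)$ are $\cdots\to R\xrightarrow{\,x\,}R\xrightarrow{\,x^2\,}R\to I$ and $\cdots\to R\xrightarrow{\,x^2\,}R\xrightarrow{\,x\,}R\to I^2$, and lifting $I^2\hookrightarrow I$ forces $\tilde\iota_0=x$ but then $\tilde\iota_1=1$ modulo $\ker(\cdot\,x^2)=(x)$, a unit; correspondingly $\textup{Tor}_1(I^2,k)\to\textup{Tor}_1(I,k)$ is an isomorphism, not zero. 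So the regularity of $(p,v_1,v_2,\dots)$ has to enter the argument in a concrete way, and the auxiliary condition you propose does not capture it: since minimality gives $d(P^a_{s+1})\subseteq I\cdot P^a_s$ and $d(I\cdot P^a_{s+1})=I\cdot d(P^a_{s+1})$, your identity $d(P^a_{s+1})\cap I\cdot P^a_s=d(I\cdot P^a_{s+1})$ collapses to $\Omega^a_{s+1}=I\cdot\Omega^a_{s+1}$ for the $(s{+}1)$-st syzygy, which by (graded) Nakayama would force the syzygies of $I^a$ to vanish; already at $s=0$ this would say $I^a$ is free, which is false. Pointing at \c{S}ega's theorem is a citation, not an argument, and the content of that theorem is exactly the step you have left open.

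The paper handles the regularity input differently and self-containedly. It dualizes to the statement that $\textup{Tor}^{\textup{BP}_*}_{*,*}(I^{a+1},\mathbb{F}_p)\to\textup{Tor}^{\textup{BP}_*}_{*,*}(I^a,\mathbb{F}_p)$ vanishes, filters $\textup{BP}_*$ by powers of $I$, and examines the resulting spectral sequence. Regularity identifies the $E_1$-page with the Koszul complex $E(\tau_0,\tau_1,\dots)\otimes\mathbb{F}_p[q_0,q_1,\dots]$ with $d_1\tau_n=q_n$, which is exact away from filtration $0$. Feeding that exactness into the exact couple built from the long exact sequences of $0\to I^{n+1}\to I^n\to I^n/I^{n+1}\to0$, a short diagram chase shows that each $j$-map $\textup{Tor}(I^n,\mathbb{F}_p)\to\textup{Tor}(I^n/I^{n+1},\mathbb{F}_p)$ is injective, equivalently that each transition map $\textup{Tor}(I^{n+1},\mathbb{F}_p)\to\textup{Tor}(I^n,\mathbb{F}_p)$ is zero. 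If you wanted to salvage your chain-map approach, the cleanest route would be to compute $\textup{Tor}(I^a,\mathbb{F}_p)$ from the Koszul resolution of $\mathbb{F}_p$ rather than from a minimal resolution of $I^a$; that reformulates the claim as ``every Koszul cycle with coefficients in $I^{a+1}$ bounds with coefficients in $I^a$,'' which is the same Koszul exactness the paper isolates and proves.
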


Now we prove that the regrade algebraic Novikov tower satisfies the second condition of Proposition~\ref{conditions for algNSS}.
\begin{prop} \label{prop:cttower2}
Each map $i_a: C^*_{a+1}\longrightarrow C^*_{a}$ induces the zero map on $\mathbf{R}^{*,*}\Hom_{\textup{BP}_*}(-, \mathbb{F}_p)$.
\end{prop}

\begin{proof}
Because we are computing derived Hom in the derived category of $\textup{BP}_*$-modules, we can first apply a forget functor from $\textup{BP}_*\textup{BP}$-comodules to $\textup{BP}_*$-modules on our complexes.

From the definition of a relative injective resolution, $C_0^*$ splits in the category of $\textup{BP}_*$-modules as a direct sum of cochain complexes 
$$C_0^* = \displaystyle\bigoplus_{j \geq 0} D_{0,j}^*,$$
where $D_{0,0}^*$ is isomorphic to the cochain complex
\begin{displaymath}
    \xymatrix{
  \textup{BP}_* \ar[r] & 0 \ar[r] & 0 \ar[r] & \cdots }
  \end{displaymath}  
and for $j \geq 1$, $D_{0,j}^*$ is a cochain complex of the form
  \begin{displaymath}
  \xymatrix{
  \cdots \ar[r] & 0 \ar[r] & N_j \ar[r]^{\textup{id}} & N_j \ar[r] & 0 \ar[r] & \cdots     }
\end{displaymath}
that is concentrated in cohomological degrees $j$ and $j-1$, where $N_j$ is a $\textup{BP}_*$-module.


The algebraic Novikov filtration only depends on the underlying $\textup{BP}_*$-module structure. So we have
$$C_a^* = \displaystyle\bigoplus_{j \geq 0} D_{a,j}^*$$ where for $j\geq1$, $D_{a,j}^*$ is the subcomplex
$$\cdots\longrightarrow 0\longrightarrow I^{a-j+1}N_j \longrightarrow I^{a-j}N_j\longrightarrow 0 \longrightarrow\cdots.$$
It follows that $D^*_{a,j}$ is quasi-isomorphic to the complex
$$\cdots\longrightarrow0\longrightarrow I^{a-j}N_j/I^{a-j+1}N_j\longrightarrow 0\longrightarrow \cdots.$$

Now we consider the maps
\begin{equation}\label{wer}
\mathbf{R}^{*,*}\Hom_{\textup{BP}_*}(D_{a,j}^*, \mathbb{F}_p) \longrightarrow \mathbf{R}^{*,*}\Hom_{\textup{BP}_*}(D_{a+1,j}^*, \mathbb{F}_p)
\end{equation}
that are induced by the inclusions 
$$D_{a+1,j}^* \longrightarrow D_{a,j}^*.$$
For $j\geq 1$, these maps can be identified as (shifts of)
$$\mathbf{R}^{*,*}\Hom_{\textup{BP}_*}(I^{a-j}N_j/I^{a-j+1}N_j, \mathbb{F}_p) \longrightarrow \mathbf{R}^{*,*}\Hom_{\textup{BP}_*}(I^{a-j+1}N_j/I^{a-j+2}N_j, \mathbb{F}_p).$$
It is clear that the maps 
 $$I^{a-j+1}N_j/I^{a-j+2}N_j \longrightarrow I^{a-j}N_j/I^{a-j+1}N_j$$ 
are all zero. Therefore the maps in (\ref{wer}) are all zero for $j\geq 1$.

For $j=0$, 
we have $D_{a,0}^*$ is the complex
$$I^a \longrightarrow 0 \longrightarrow\cdots$$
and the corresponding maps in (\ref{wer}) can be rewritten as
$$\Ext^{*,*}_{\textup{BP}_*}(I^m, \mathbb{F}_p) \longrightarrow \Ext^{*,*}_{\textup{BP}_*}(I^{m+1}, \mathbb{F}_p).$$
By Lemma \ref{lem:tech}, they are all zero. Therefore, the maps
$$\mathbf{R}^{*,*}\Hom_{\textup{BP}_*}(C_{m+1}^*, \mathbb{F}_p) \longrightarrow \mathbf{R}^{*,*}\Hom_{\textup{BP}_*}(C_{m}^*, \mathbb{F}_p)$$
are all zero, since they are zero on each direct summand.

\end{proof}

Combining Proposition \ref{prop:cttower1} and \ref{prop:cttower2}, we have shown that the regraded algebraic Novikov tower satisfies the two conditions of Proposition~\ref{conditions for algNSS}, and therefore corresponds to a motivic Adams tower for $\widehat{S^{0,0}}/\tau$. This proves that there exists an isomorphism between the regraded algebraic Novikov spectral sequence and the motivic Adams spectral sequence for $\widehat{S^{0,0}}/\tau$.

\subsection{Proof of Lemma \ref{lem:tech}}
We prove Lemma~\ref{lem:tech} in this subsection.

For any $\textup{BP}_*$-module $M$, we have 
$$\Ext_{\textup{BP}_*}^{*,*}(M,\mathbb{F}_p) \cong \Hom_{\mathbb{F}_p}(\textup{Tor}^{\textup{BP}_*}_{*,*}(M,\mathbb{F}_p),\mathbb{F}_p).$$
Lemma~\ref{lem:tech} is implied by its dual statement:

\begin{lemma} \label{lem tech 2}
The maps
$$\textup{Tor}_{*,*}^{\textup{BP}_*}(I^{n+1},\mathbb{F}_p) \rightarrow \textup{Tor}_{*,*}^{\textup{BP}_*}(I^n,\mathbb{F}_p)$$
are zero for $n \geq 0$.
\end{lemma}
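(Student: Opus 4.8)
The plan is to compute $\textup{Tor}_{*,*}^{\textup{BP}_*}(I^n,\mathbb{F}_p)$ using a concrete understanding of the ideals $I^n$ and the structure of $\textup{BP}_*$ as a polynomial ring. Recall $\textup{BP}_* = \mathbb{Z}_{(p)}[v_1, v_2, \dots]$ (or its $p$-completion) and $I = (p, v_1, v_2, \dots)$. First I would recall the short exact sequences of $\textup{BP}_*$-modules $0 \to I^{n+1} \to I^n \to I^n/I^{n+1} \to 0$, and note that $I^n/I^{n+1}$ is a free module over $\textup{BP}_*/I = \mathbb{F}_p$, i.e. an $\mathbb{F}_p$-vector space, generated by the degree-$n$ monomials in the $v_i$'s (with $v_0 = p$). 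The long exact sequence in $\textup{Tor}^{\textup{BP}_*}_{*,*}(-, \mathbb{F}_p)$ then reduces the claim to understanding the connecting maps and the maps induced by the inclusions.

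The key step is to show that the boundary map $\textup{Tor}_{s,*}^{\textup{BP}_*}(I^n/I^{n+1}, \mathbb{F}_p) \to \textup{Tor}_{s-1,*}^{\textup{BP}_*}(I^{n+1}, \mathbb{F}_p)$ is surjective for all $s \geq 1$, equivalently that the map $\textup{Tor}_{*,*}^{\textup{BP}_*}(I^{n+1}, \mathbb{F}_p) \to \textup{Tor}_{*,*}^{\textup{BP}_*}(I^n, \mathbb{F}_p)$ is zero, which is exactly what we want — so this must be proved by another route, not extracted from the long exact sequence alone. The cleanest approach is to compute $\textup{Tor}^{\textup{BP}_*}_{*,*}(I^n, \mathbb{F}_p)$ directly. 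Since $\textup{BP}_*$ is a (graded, coherent) polynomial ring and $I^n$ is a monomial-type ideal, I would use a Koszul-type resolution: tensoring the Koszul complex $\Lambda^\bullet$ on the generators does not immediately apply because there are infinitely many generators, so instead I would pass to the associated graded and argue degree by degree, or use the fact that $\textup{Tor}^{\textup{BP}_*}_{*,*}(M, \mathbb{F}_p) = \textup{Tor}^{\textup{BP}_*}_{*,*}(M, \textup{BP}_*/I)$ can be computed from any free resolution of $M$. The essential point is that the minimal free resolution of $I^n$ has its generators in internal degrees $\geq n$, so after applying $-\otimes_{\textup{BP}_*} \mathbb{F}_p$ the image of $\textup{Tor}(I^{n+1})$ in $\textup{Tor}(I^n)$ lands in a "too-high-filtration" piece: more precisely, the inclusion $I^{n+1} \hookrightarrow I^n$ factors, after minimalizing, through $I \cdot I^n$, i.e. the map on generators of minimal free resolutions is divisible by elements of $I$, hence becomes zero after tensoring down to $\mathbb{F}_p$.

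Concretely, the argument I expect to work: a minimal free $\textup{BP}_*$-resolution $F_\bullet \to I^n$ and $G_\bullet \to I^{n+1}$ can be chosen so that the chain map $G_\bullet \to F_\bullet$ lifting the inclusion $I^{n+1} \hookrightarrow I^n$ has, in degree $0$, the property that each generator of $G_0$ maps to an element of $I \cdot F_0$ — this is because $I^{n+1} = I \cdot I^n$, so every minimal generator of $I^{n+1}$ is an $I$-multiple of generators of $I^n$ modulo relations. By minimality and the standard comparison lemma for resolutions, the entire chain map $G_\bullet \to F_\bullet$ then has image in $I \cdot F_\bullet$, so $G_\bullet \otimes \mathbb{F}_p \to F_\bullet \otimes \mathbb{F}_p$ is zero, giving $\textup{Tor}^{\textup{BP}_*}_{*,*}(I^{n+1}, \mathbb{F}_p) \to \textup{Tor}^{\textup{BP}_*}_{*,*}(I^n, \mathbb{F}_p)$ is zero. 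I would need to be careful that $\textup{BP}_*$ is not Noetherian, so "minimal free resolution" should be handled via the graded Nakayama lemma (valid since each graded piece is finite) or by working one internal degree at a time; this bookkeeping with the $p$-completion and the infinitely many polynomial generators is the main obstacle, but it is routine once set up correctly. Dualizing via $\Ext^{*,*}_{\textup{BP}_*}(M,\mathbb{F}_p) \cong \Hom_{\mathbb{F}_p}(\textup{Tor}^{\textup{BP}_*}_{*,*}(M,\mathbb{F}_p),\mathbb{F}_p)$ then yields Lemma~\ref{lem:tech}.
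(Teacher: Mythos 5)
Your proposal diverges from the paper's argument, and the divergence introduces a genuine gap. The paper filters $\textup{BP}_*$ by powers of $I$, forms the associated spectral sequence with $E_1^{s,t,i} = \textup{Tor}^{\textup{BP}_*}_{s,t}(I^i/I^{i+1},\mathbb{F}_p)$, identifies this $E_1$-page with the Koszul-type complex $E(\tau_0,\tau_1,\dots)\otimes\mathbb{F}_p[q_0,q_1,\dots]$ with $d_1\tau_n = q_n$, checks this is exact away from $i=0$, and then runs a diagram chase on the exact couple to force the maps $A^{i+1}\to A^i$ to vanish. You instead attempt a direct argument with minimal free resolutions.

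The gap is in the step: ``by minimality and the standard comparison lemma for resolutions, the entire chain map $G_\bullet\to F_\bullet$ then has image in $I\cdot F_\bullet$.'' This is not a consequence of the comparison lemma together with the observation $I^{n+1}=I\cdot I^n$; it can fail outright for a local ring $(R,\mathfrak{m},k)$ even when the degree-$0$ map $\phi_0$ has been arranged to land in $\mathfrak{m}F_0$. Take $R=k[x]/(x^3)$ with $\mathfrak{m}=(x)$, so $\mathfrak{m}^2=(x^2)\cong R/\mathfrak{m}$ and $\mathfrak{m}=(x)\cong R/(x^2)$. Their minimal free resolutions are the $2$-periodic complexes $\cdots\xrightarrow{x}R\xrightarrow{x^2}R\xrightarrow{x}R\to(x^2)$ and $\cdots\xrightarrow{x^2}R\xrightarrow{x}R\xrightarrow{x^2}R\to(x)$. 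Lifting the inclusion $(x^2)\hookrightarrow(x)$ forces $\phi_0(1)=x\in\mathfrak{m}F_0$, which looks promising, but the comparison lemma then forces $\phi_1(1)\equiv 1\pmod{(x)}$, so $\phi_1$ cannot be pushed into $\mathfrak{m}F_1$. Correspondingly $\textup{Tor}_1(\mathfrak{m}^2,k)\to\textup{Tor}_1(\mathfrak{m},k)$ is an isomorphism $k\to k$, not zero. Thus the abstract minimal-resolution argument is insufficient; the vanishing in the lemma genuinely uses the regularity of $\textup{BP}_*$ (equivalently, that $\textup{Tor}^{\textup{BP}_*}(\mathbb{F}_p,\mathbb{F}_p)$ is exterior), which the paper's Koszul identification of the $E_1$-page supplies and your argument does not. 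Your own preamble even gestures at the correct route — passing to associated graded and using a Koszul-type resolution — but the argument you actually run does not use that structure; to repair it you would need to control $\phi_\bullet$ using the explicit Koszul syzygies, at which point you have essentially reproduced the paper's proof.
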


\begin{proof}
The powers of $I$ filter $\textup{BP}_*$ as a $\textup{BP}_*$-module, and the $\textup{BP}_*$-action on the associated graded pieces factors through an $\mathbb{F}_p$-action. 

Therefore, we have an associated spectral sequence
$$E_1^{s,t,i} = \textup{Tor}^{\textup{BP}_*}_{s,t}(I^i/I^{i+1},\mathbb{F}_p) \Longrightarrow \textup{Tor}^{\textup{BP}_*}_{s,t}(\textup{BP}_*, \mathbb{F}_p).$$
The $E_1$-page can be identified as 
$$E(\tau_0,\tau_1,\cdots) \otimes \mathbb{F}_p[q_0, q_1, \cdots],$$
since $\textup{Tor}_{*,*}^{\textup{BP}_*}(\mathbb{F}_p, \mathbb{F}_p) = E(\tau_0,\tau_1,\cdots)$, where $E(\tau_0, \tau_1, \cdots)$ is the exterior algebra over $\mathbb{F}_p$ generated by the $\tau_j$'s, with internal degree the same as $v_j$ and homological degree $1$, and $\textup{gr}^*\textup{BP}_* = \mathbb{F}_p[q_0, q_1, \cdots]$, where $q_j$ corresponds to $v_j$.

Using the Koszul complex, it is straight forward to see that this spectral sequence is multiplicative and that $d_1\tau_n = q_n$. Therefore its $E_2$-page is concentrated in degrees $i=0$, 
and the following sequence is exact
$$0 \longrightarrow \textup{Tor}_{*,*}^{\textup{BP}_*}(\textup{BP}_*, \mathbb{F}_p) \longrightarrow E_1^{*,*,0} \longrightarrow E_1^{*,*,1} \longrightarrow \cdots. $$
Now the exact couple looks like this:

\begin{displaymath}
\xymatrix{
& & & & & & & 0 \ar[ld] &\\
\cdots \ar[rr] & & A^2 \ar[rr] & & A^1 \ar[dl]^-j \ar[rr] & & A^0 \ar[dl]^-j & &\\
& \cdots & & E_1^{*,*,1} \ar[ll]^-{d_1} \ar@{-->}[ul]^-k & & E_1^{*,*,0} \ar[ll]^-{d_1} \ar@{-->}[ul]^-k & & &
}	
\end{displaymath}
where $A^i = \textup{Tor}_{*,*}^{\textup{BP}_*}(I^i, \mathbb{F}_p)$, and the bottom line is exact. All we need to show is that all the $j$ maps are injective. This follows from an induction using a diagram chasing argument.
\end{proof}

\begin{remark}
For the polynomial ring with finitely many generators $\mathbb{Z}_p [x_1, x_2, \cdots, x_t]$, the analogue of  Lemmas~\ref{lem:tech} and \ref{lem tech 2} are well known (see \cite{Sega} for example).
	
\end{remark}

\section{Appendix - computation of some classical Adams differentials} \label{appendix:calculations}

In this appendix, we illustrate the power of the isomorphism of spectral sequences in Theorem \ref{thm:isoofss}, by re-computing certain low filtration and historically more difficult differentials in the range up to the 45-stem at the prime 2. We follow notations in Isaksen's Stable Stems \cite{StableStems} and Isaksen, the second and third author's More Stable Stems \cite{IWX}.

When computing nontrivial differentials in the classical Adams spectral sequence, it is usually harder to give proofs for the ones whose sources are in low Adams filtrations. There are at least two reasons for this. Firstly, there are more potential targets that it could hit, so it means more possibilities to check and rule out. Secondly, on the other hand, elements in high Adams filtrations can usually be detected by certain known spectrum in small chromatic height - for instance, elements above the $1/3$-line can be detected by the $K(1)$-local sphere and many elements around the $1/5$-line can be detected by the spectrum of topological modular forms. This gives ways to compare with Adams spectral sequences of other spectra.

Up to the 45-stem, we list the following 10 nontrivial differentials, whose sources are in low Adams filtrations. Five of them are $d_2$-differentials, four of them are $d_3$-differentials, and one of them is a $d_4$-differential.

\begin{table}[h]

\centering
\begin{tabular}{ l  | c | c }
Adams differential & Stem of the source & Filtration of the source \\ [0.5ex] 
\hline 
$d_2(h_4) = h_0 h_3^2$ & 15 & 1 \\
$d_3(h_0h_4)= h_0d_0$& 15 & 2\\
$d_2(e_0) = h_1^2d_0$ & 17 & 4\\
$d_2(f_0) = h_0^2e_0$ & 18 & 4 \\
$d_2(h_5) = h_0 h_4^2$ & 31 & 1 \\
$d_3(h_0^3 h_5) = h_0 \Delta h_2^2$ & 31 & 4 \\
$d_3(h_2 h_5) = h_0p = h_1d_1$ & 34 & 2 \\
$d_4(h_3 h_5) = h_0x$ & 38 & 2 \\
$d_3(e_1) = h_1t$ & 38 & 4 \\
$d_2(c_2) = h_0f_1$ & 41 & 3 \\
\end{tabular}

\end{table}

Historically, the first four of them were proved by May in his thesis, by comparing with Toda's unstable computation. The next two are obtained by the Hopf invariant one problem and by comparing with the J-spectrum. The elements $\Delta h_2^2$ and $h_0\Delta h_2^2$ were historically called $r$ and $s$, and there is a nontrivial extension in the May spectral sequence that gives us a relation $s = h_0 r$. The last four, except the one on $d_3(e_1)$, were proved by Barratt-Mahowald-Tangora \cite{BMT} using ad hoc methods. In fact, the differentials $d_3(h_2 h_5) = h_0p$ and $d_2(c_2) = h_0f_1$ are both closely related to the nontrivial $\nu$-extension from $h_4^2$ to the element $p$, and the differential $d_4(h_3 h_5) = h_0x$ is closely related to the nontrivial $\sigma$-extension from $h_4^2$ to the element $x$. For the element $e_1$, Barratt-Mahowald-Tangora \cite{BMT} erroneously thought it was a permanent cycle. It was later proved by Bruner \cite{Br1} using power operations that it supports a nontrivial differential $d_3(e_1) = h_1t$.

Now using Theorem \ref{thm:isoofss}, we compare them with the computations of the motivic Adams spectral sequence of $\widehat{S^{0,0}}/\tau$. All five $d_2$-differentials are present in the motivic Adams spectral sequence of $\widehat{S^{0,0}}/\tau$. This gives immediate proofs for all five $d_2$-differentials.

Moreover, the three out of the four $d_3$-differentials except $d_3(h_2h_5)$ are present in the motivic Adams spectral sequence of $\widehat{S^{0,0}}/\tau$. To be careful, one also need to rule out the possibility of shorter differentials - $d_2$'s in these cases. This can be done by multiplying $h_0$ to the proposed $d_2$-differentials and get contradictions. 

For the $d_3$-differential $d_3(h_2 h_5) = h_1d_1$, one can show the following three statements are equivalent, by considering the long exact sequence of motivic homotopy groups associated to the cofiber map of $\tau$.

\begin{enumerate}
	\item There is a differential $d_3(h_2 h_5) = \tau h_1d_1$ in the motivic Adams spectral sequence of $\widehat{S^{0,0}}$.
	\item In homotopy groups, $\{h_2h_5\}$ maps to $\{h_1d_1\}$ under the quotient map from $\widehat{S^{0,0}}/\tau$ to its top cell $S^{1, -1}$.
	\item There is an $\eta$-extension from $h_2h_5$ to $\overline{h_1^2d_1}$ in $\pi_{*,*}(\widehat{S^{0,0}}/\tau)$, where $\overline{h_1^2d_1}$ is the element in the motivic Adams $E_2$-page of $\widehat{S^{0,0}}/\tau$ that corresponds to $h_1^2d_1$ in that of the top cell $\widehat{S^{1, -1}}$.
\end{enumerate}

The statement $(3)$ can be checked in the $E_\infty$-page of the motivic Adams spectral sequence for $\widehat{S^{0,0}}/\tau$, which is isomorphic to the classical Adams-Novikov $E_2$-page. This gives a proof for the $d_3$-differential in the motivic Adams spectral sequence for $\widehat{S^{0,0}}$, and hence the classical $d_3$-differential. The statement $(2)$ is proved by Isaksen in Table 42 of Stable Stems \cite{StableStems}.

At last, the $d_4$-differential $d_4(h_3h_5)$ is also present in the motivic Adams spectral sequence for $\widehat{S^{0,0}}/\tau$. To pull it back and get the $d_4$-differential in the motivic sphere, one need to rule out the possibilities of nonzero $d_2$'s and $d_3$'s. For degree reasons, there is no possible $d_2$'s. To rule out the only $d_3$ possibility that $d_3(h_3h_5) = x$, since $h_3x = h_0^2 g_2$, this would give another $d_3$-differential by multiplying by $h_3$: $d_3(h_3^2h_5) = h_0^2g_2$. However, there is no such $d_3$ in the motivic Adams spectral sequence for $\widehat{S^{0,0}}/\tau$, which gives a contradiction.

In sum, we reprove all 10 nontrivial low filtration differentials up to the 45-stem without much effort. In fact, among all nontrivial differentials up to the 45-stem, there is only one that cannot be proved by our motivic $\widehat{S^{0,0}}/\tau$-method: $d_3(\Delta h_2^2) = h_1d_0^2$. This can be proved by other methods, such as the ad hoc method by Barratt-Mahowald-Tangora \cite{BMT}, the power operation method by Bruner \cite{Br2}, the method of detection by $tm\hspace{-0.1em}f$, and the $RP^\infty$-technique in \cite{Xu2}.

We include the following Isaksen's charts for the reader's reference of the differentials that are discussed in this appendix. 

There are eight charts in total. The first two charts are for the classical Adams spectral sequences. The horizontal degree is $t-s$, i.e., the topological stem, and the vertical degree is $s$, i.e., the Adams filtration. Each dot is a copy of $\mathbb{F}_2$. Vertical lines and lines of slope 1 and $1/3$ correspond to multiplication by $h_0$, $h_1$ and $h_2$ respectively. Lines of negative slopes correspond to differentials. We mark all algebraic generators for completeness.

For the rest of the six charts, we only mark certain low Adams filtration elements and the elements that are relevant to our discussion of differentials in this section.

The second two charts are the $E_2$-pages with $d_2$-differentials of the motivic Adams spectral sequences for $\widehat{S^{0,0}}/\tau$. Each arrow with slope 1 indicating an infinite $h_1$-tower, and each arrow with slope $-2$ is a family of $h_1$-periodic $d_2$-differentials. 

The third two charts are the $E_3$-pages with $d_3$ and $d_4$-differentials of the motivic Adams spectral sequences for $\widehat{S^{0,0}}/\tau$. 

The last two charts are the $E_\infty$-pages of the motivic Adams spectral sequences for $\widehat{S^{0,0}}/\tau$. The blue lines are nontrivial $2, \ \eta$ and $\nu$-extensions.


\begin{landscape}

\psset{unit=0.6cm}
\psset{linewidth=0.2mm}




\end{document}